\def\draft{n}
\def\web#1{\href{\weburl/#1}{\begin{greek}web\end{greek}/#1}}
\numberwithin{equation}{section}
\theoremstyle{plain}
\newtheorem{theorem}[equation]{Theorem}
\newtheorem{fheorem}[equation]{Fheorem}
\newtheorem{lemma}[equation]{Lemma}
\newtheorem{corollary}[equation]{Corollary}
\newtheorem{forollary}[equation]{Forollary}
\newtheorem{conjecture}[equation]{Conjecture}
\theoremstyle{definition}
\newtheorem{definition}[equation]{Definition}
\newtheorem{discussion}[equation]{Discussion}
\newtheorem{example}[equation]{Example}
\newtheorem{remark}[equation]{Remark}
\newlength{\standardunitlength}
\newcommand{\End}{\operatorname{End}}
\def\qed{{\linebreak[1]\null\hfill\text{$\Box$}}}
\gdef\SetFigFont#1#2#3#4#5{%
  \reset@font\fontsize{#1}{#2pt}%
  \fontfamily{#3}\fontseries{#4}\fontshape{#5}%
  \selectfont}%
\gdef\SetFigFont#1#2#3#4#5{%
  \reset@font\fontsize{#1}{#2pt}%
  \fontfamily{#3}\fontseries{#4}\fontshape{#5}%
  \selectfont}%
\newlength{\globalparindent}
\def\arXiv#1{{\href{http://arXiv.org/abs/#1}{arXiv:\linebreak[0]#1}}}
\def\barGamma{{\bar{\Gamma}}}
\def\bariota{{\bar{\iota}}}
\def\bbR{{\mathbb R}}
\def\calA{{\mathcal A}}
\def\calB{{\mathcal B}}
\def\calG{{\mathcal G}}
\def\calU{{\mathcal U}}
\def\calX{{\mathcal X}}
\def\calY{{\mathcal Y}}
\def\calZ{{\mathcal Z}}
\def\fraka{{\mathfrak a}}
\def\frakg{{\mathfrak g}}
\def\Hom{\operatorname{Hom}}
\def\draftcut{\if\draft y \cleardoublepage \fi}
\definecolor{lightred}{RGB}{255, 217, 217}
\def\red{\color{red}}
\def\imagetop#1{\vtop{\null\hbox{#1}}}
\def\endpar#1{~\hfill\fbox{\footnotesize#1}}
\def\calACD{\mathcal{ACD}}
\def\calAC{\mathcal{AC}}
\def\calBD{\mathcal{BD}}
\def\calB{\mathcal{B}}
\def\calOUD{\mathcal{OUD}}
\def\calOU{\mathcal{OU}}
\def\calPB{\mathcal{PB}}
\def\calROU{\mathcal{ROU}}
\def\calT{\mathcal{T}}
\def\vcalACD{v\mathcal{ACD}}
\def\vcalAC{v\mathcal{AC}}
\def\vcalOUD{v\mathcal{OUD}}
\def\vcalOU{v\mathcal{OU}}
\def\vcalPBD{v\mathcal{PBD}}
\def\vcalPB{v\mathcal{PB}}
\def\vcalROU{v\mathcal{ROU}}
\def\vcalT{v\mathcal{T}}
\def\toto{\twoheadrightarrow}
\def\Ch{\mathit{Ch}}
\def\CR{\mathit{CR}}
\begin{document} 
\setcounter{secnumdepth}{4}
\raggedbottom

\title{Over then Under Tangles}

\author{Dror~Bar-Natan}
\address{
  Department of Mathematics\\
  University of Toronto\\
  Toronto Ontario M5S 2E4\\
  Canada
}
\email{drorbn@math.toronto.edu}
\urladdr{http://www.math.toronto.edu/drorbn}

\author{Zsuzsanna Dancso}
\address{
  School of Mathematics and Statistics\\
  The University of Sydney\\
  Eastern Ave\\
  Camperdown NSW 2006, Australia
}
\email{zsuzsanna.dancso@sydney.edu.au}
\urladdr{http://zsuzsannadancso.net}

\author{Roland~van~der~Veen}
\address{
  University of Groningen, Bernoulli Institute\\
  P.O. Box 407\\
  9700 AK Groningen\\
  The Netherlands
}
\email{roland.mathematics@gmail.com}
\urladdr{http://www.rolandvdv.nl/}

\date{First edition Jul.\ 16, 2020, this edition Feb.~4,~2021. Online versions:~\cite{Self}.}

\subjclass[2020]{57K12, 20F36, 20F38}
\keywords{knots, braids, virtual braids, tangles, virtual tangles,
diamond lemma, extraction graphs, Drinfel'd double}

\begin{abstract} {\def\me{}
  Over-then-Under (OU) tangles are oriented tangles whose strands travel
through all of their over crossings before any under crossings. In this
paper we discuss the idea of {\em gliding\me}: an algorithm by which any
tangle diagram could be brought to OU form. Unfortunately, the algorithm
is flawed. However, by analyzing cases in which it does succeed we obtain
a braid classification result, which we also extend to virtual braids,
and provide a Mathematica implementation. We discuss other instances
of successful ``gliding ideas'' which appear in the literature --
sometimes in disguise -- such as the Drinfel'd double construction,
Enriquez's work on quantization of Lie bialgebras, and Audoux and
Meilhan's classification of welded homotopy links,

}\end{abstract}

\maketitle

\setcounter{tocdepth}{3}
\tableofcontents

\draftcut \section{Introduction}

In this paper we study {\em Over then Under} (OU) tangles,
a class of oriented tangles in which each strand travels through
all of its over-crossings before any of its under-crossings. See
Figure~\ref{fig:OUExamples} for examples; the full definition is given in Section~\ref{sec:Gliding}.

\begin{figure}
\[ \input{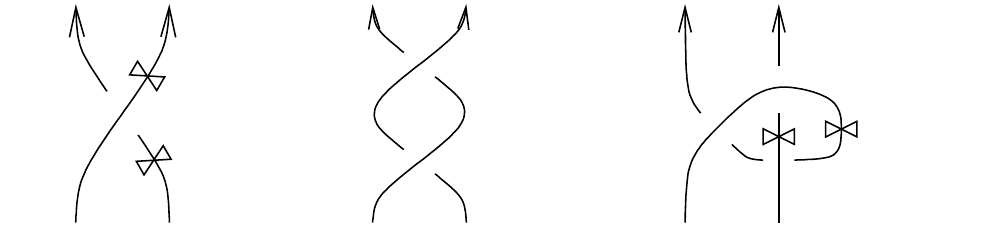_t}\includegraphics[height=0.875in]{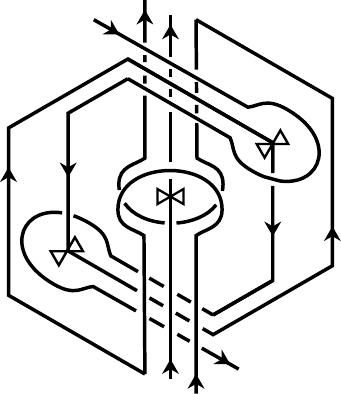} \]
\caption{The tangle diagram (A) is OU as strand 1 is all ``over''
  (so it has an empty ``U'' part) and strand 2 is all ``under'' (so it
  has an empty ``O'' part). The tangle diagram (B) is not
  OU: strand 1 is O then U, but strand 2 is U then O. Yet the tangle
  represented by (B) is OU because it is also represented
  by (C), which is OU. The diagram (D) is again OU; which familiar tangle does it represent?
} \label{fig:OUExamples}
\end{figure}

We present an algorithm (Section~\ref{sec:Gliding}) which brings non-OU
tangle diagrams to OU form using a sequence of {\em glide moves}:
specific isotopies designed to eliminate any ``forbidden sequences''
of crossings along a strand. At first glance it seems to converge for
any tangle diagram; on closer look, however, one notices that in certain
special cases of a strand crossing itself, the glide moves fail.

The goal of this paper is to investigate and review special cases where the gliding algorithm 
does converge. Indeed, when it does, it can be extremely useful, and in fact gliding ideas have been in use in knot theory
and quantum algebra for some
 time, without being recognised as part of one theme. In our opinion, there is much yet to be gained by
looking for further applications; in this paper, we present two applications in detail.

The gliding algorithm converges for braids, and every
braid -- when considered as a tangle -- has a unique OU form. Hence,
the OU form is a {\bf separating braid invariant}. We also prove that
in fact, tangles which can be brought to OU form are precisely braids,
using the identification of the braid group with the mapping class group
of a punctured disc (see Section~\ref{sec:classical}).

Even better, the gliding argument extends to virtual braids to show
that every virtual braid has a unique OU form when it is regarded
as a virtual tangle. With extra work we find that this OU form is a
{\bf complete invariant for virtual braids}. This is the subject of
Section~\ref{sec:virtual}.

Section~\ref{sec:Assorted} contains some additional comments, mostly on
the relationship between OU tangles and Hopf algebras and on ``Extraction
Graphs'', labeled graphs that are naturally associated with braids
and virtual braids by the process of recovering them from their OU forms.

In Section~\ref{sec:comp} we present Mathematica implementations, including tabulations of virtual pure braids and classical braids.

In Section~\ref{sec:more} we review a range of other instances in
the literature where ``gliding ideas'' play a role: the Drinfel'd double
construction in quantum groups, a classification of welded homotopy
links by Audoux and Meilhan  \cite{AudouxMeilhan:PeripheralSystems},
Enriquez's work on the quantization of Lie bialgebras
\cite{Enriquez:UniversalAlgebras, Enriquez:QuantizationFunctors}, and
earlier work of the authors.

All tangle diagrams in this paper are {\em open and oriented}:
Their components are always oriented intervals and never circles. For simplicity and definiteness, all
tangles in this paper are unframed: we allow all Reidemeister 1 (R1) moves, though this is not strictly
necessary and similar results also hold in the framed case.

\draftcut \section{OU Tangles and Gliding}\label{sec:Gliding}

\begin{definition} \label{def:OU} An Over-then-Under
(OU) tangle diagram
is a tangle whose strands complete all of their over crossings before any
of their under crossings, and an OU tangle is an oriented tangle that can
be represented by an OU tangle diagram. 

This is equivalent to the notion of {\em ascending} tangles
in~\cite[Definition~4.15]{AudoxBellingeriMeilhanWagner:WeldedStringLinks},
also called {\em sorted} in
\cite[Definition~1.7]{AudouxMeilhan:PeripheralSystems} in the context
of welded homotopy links.

In greater detail, an OU tangle diagram
is an oriented tangle diagram each of whose strands can be divided in
two by a ``transition point'', sometimes indicated with a bow tie symbol
$\bowtie$, such that in the first part (before the transition) it is the
``over'' strand in every crossing it goes through, and in the second part
(after the transition) it is the ``under'' strand in every crossing it
goes through, so a journey through each strand looks like an OO\ldots
O($\bowtie$)UU\ldots U sequence of crossings. Some examples are shown in
Figure~\ref{fig:OUExamples}.
\end{definition}

\begin{remark} Loosely, an
OU tangle is the ``opposite'' of an alternating tangle: crossings along each strand read OOOUUU rather than OUOUOU.
\end{remark}

Good mathematics is often discovered via wrong proofs and false theorems, which we mine for the truth still contained within. But in academic writing, one presents only the final product. Here, we take a half-page detour from academic tradition to present a Fheorem (false theorem), which, while it ultimately fails, illustrates the idea and potential of {\em gliding}. The reader who prefers tradition should rest assured that everything in the paper from Discussion~\ref{disc:froofs1} onwards is true.

\begin{fheorem}[Gliding] \label{fhm:every} Every tangle is an OU tangle.
\end{fheorem}

\par\noindent{\it Froof.} As in Figure~\ref{fig:Gliding}, the froof
is frivial. Assume first that  strands 1 and 2 are already in OU form
(meaning, all their O crossings come before all their U ones) but strand 3
still needs fixing, because at some point it goes through two crossings, first under and then over,
as on the left of Figure~\ref{fig:Gliding}. Simply glide strand 1 forward along and over 3 and
glide strand 2 back and under 3 as in Figure~\ref{fig:Gliding}, and the UO interval
along 3 is fixed, and nothing is broken on strands 1 and 2 --- strand
1 was over and remains over (more precisely, the part of strand 1 that
is shown here is the ``O'' part), and strand 2 is under and remains
under.

In fact, it doesn't matter if strands 1 and 2 are already in OU form because as shown in the
second part of Figure~\ref{fig:Gliding}, glide moves can be performed ``in bulk''. All that the
fixing of strand 3 does to strands 1 and 2 is to replace an O by an OOO on strand 1 and a U by a
UUU on strand 2, and this does not increase their complexity as UU\ldots UOO\ldots O sequences can
be fixed in one go using bulk glide moves.{\linebreak[1]\null\hfill}

\begin{figure}
\[ \input{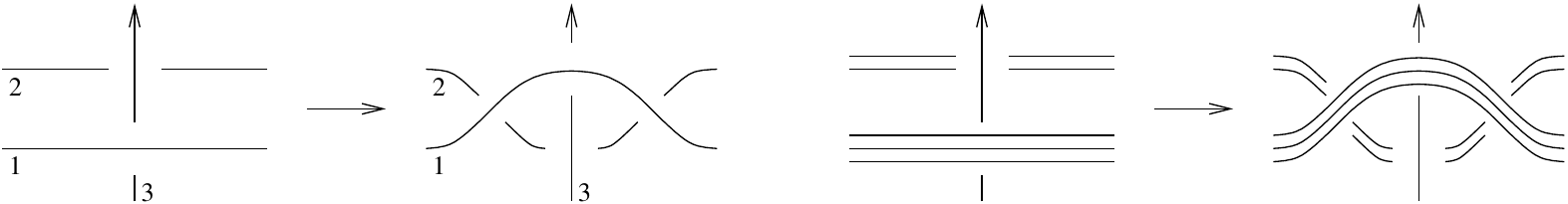_t} \]
\caption{Glide moves between two crossings and bulk glide moves.} \label{fig:Gliding}
\end{figure}

\parpic[r]{\begin{picture}(0,0)%
\includegraphics{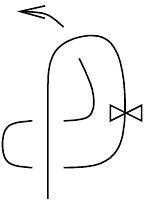}%
\end{picture}%
%
%
\setlength{\unitlength}{3947sp}%
\begingroup\makeatletter\ifx\SetFigFont\undefined%
\gdef\SetFigFont#1#2#3#4#5{%
  \reset@font\fontsize{#1}{#2pt}%
  \fontfamily{#3}\fontseries{#4}\fontshape{#5}%
  \selectfont}%
\fi\endgroup%
\begin{picture}(692,951)(-4,-148)
\end{picture}%
}
\begin{forollary}\label{for:KnotsTrivial}
All long knots are trivial.
\end{forollary}

\par\noindent{\it Froof.}
It is clear that any OU tangle on a single strand is trivial for it must
be descending as in the example on the right.
{\linebreak[1]\null\hfill}

\Needspace{20mm} 
\parpic[r]{\begin{picture}(0,0)%
\includegraphics{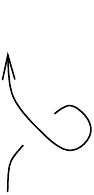}%
\end{picture}%
%
%
\setlength{\unitlength}{3947sp}%
\begingroup\makeatletter\ifx\SetFigFont\undefined%
\gdef\SetFigFont#1#2#3#4#5{%
  \reset@font\fontsize{#1}{#2pt}%
  \fontfamily{#3}\fontseries{#4}\fontshape{#5}%
  \selectfont}%
\fi\endgroup%
\begin{picture}(449,924)(-36,-73)
\end{picture}%
}
\begin{discussion} \label{disc:froofs1}
Forollary~\ref{for:KnotsTrivial} is clearly false, and so froof of the Gliding Fheorem (\ref{fhm:every}) must be false.
Indeed, while
everything we said about glide moves holds true, there is another way a strand may be U and
then O: the U and O may be parts of a single crossing, as on the right, instead of belonging to two distinct
crossings, as in the left hand side of the glide move.
\end{discussion}

\parpic[r]{\begin{picture}(0,0)%
\includegraphics{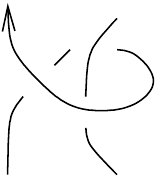}%
\end{picture}%
%
%
\setlength{\unitlength}{3947sp}%
\begingroup\makeatletter\ifx\SetFigFont\undefined%
\gdef\SetFigFont#1#2#3#4#5{%
  \reset@font\fontsize{#1}{#2pt}%
  \fontfamily{#3}\fontseries{#4}\fontshape{#5}%
  \selectfont}%
\fi\endgroup%
\begin{picture}(749,849)(-36,-223)
\end{picture}%
} It is tempting to dismiss this with
``it's only a Reidemeister 1 (R1) issue, so one may glide all
kinks to the tail of a strand and count them at the end''.  Except the
same issue can arise in ``bulk'' UU\ldots UOO\ldots O situations (as now
on the right), where it cannot be easily dismissed.  One may attempt
to resolve the UUOO situation on the right using single (non-bulk)
glide moves. We have no theoretical reason to expect this to work as the
lengths of UU\ldots U and OO\ldots O sequences may build up faster than
they are sorted. And indeed, it doesn't work. Figure~\ref{fig:swirls}
shows what happens.

\begin{figure}
\def\rO{{\red O}}\def\rU{{\red U}}\def\b{{\color{black}$\bullet$}}
\resizebox{\linewidth}{!}{\input{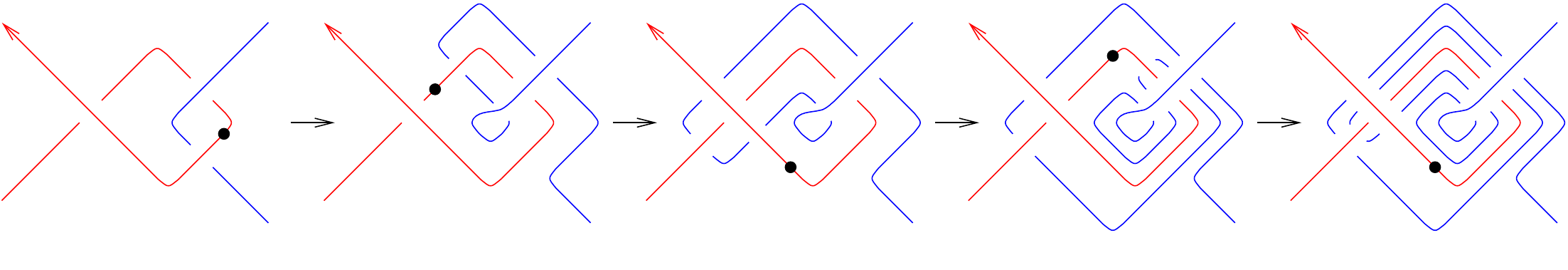_t}}
\caption{
  An attempt to fix a non-OU tangle diagram. In each step we use a
  single glide move to fix the first UO sequence encountered on strand
  1 (we mark it with a $\bullet$), but things get progressively more
  complicated. The O/U sequences below the diagrams are listed from the
  perspective of strand 1.
} \label{fig:swirls}
\end{figure}

It is true (and also follows from Corollary~\ref{cor:cOUisBraids})
that the only 1-component OU tangle is the trivial one.
\endpar{\ref{disc:froofs1}}

\begin{discussion} \label{disc:Options}
What can we salvage from the disappointing failure of gliding?
There are many options to consider. Perhaps Fheorem~\ref{fhm:every} becomes true if
we restrict to some subset of the set of all tangles? (Braids,
Section~\ref{sec:classical}). Or perhaps if we extend to some
superset? Or in a subset of a superset? (Virtual
braids, Section~\ref{sec:virtual}). Perhaps we ought to look at some
form of finite-type completion? Perhaps we should look at tangles
in manifolds? At quotients of the space of tangles? At some
combinations of these?

In the authors' opinion it is worthwhile to explore
these options. In fact, 
many of these options have already been explored, each in a different
context and without the realization that these different contexts share
a common theme: see Section~\ref{sec:more}. \endpar{\ref{disc:Options}}
\end{discussion}

\draftcut \section{The Classical Case} \label{sec:classical}

We start with a characterization of the tangles for which the gliding
procedure of Fheorem (\ref{fhm:every}) does in fact work: in Theorem~\ref{thm:iso} we find that these are precisely braids. The following definition gets to the heart of what makes a tangle ``problematic'' for the gliding procedure:

\Needspace{36mm} 
\parpic[r]{\raisebox{0mm}{
  \begin{picture}(0,0)%
\includegraphics{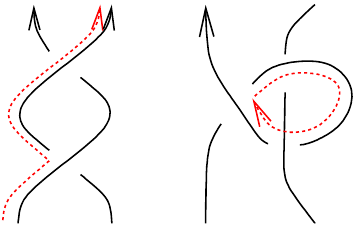}%
\end{picture}%
%
%
\setlength{\unitlength}{3947sp}%
\begingroup\makeatletter\ifx\SetFigFont\undefined%
\gdef\SetFigFont#1#2#3#4#5{%
  \reset@font\fontsize{#1}{#2pt}%
  \fontfamily{#3}\fontseries{#4}\fontshape{#5}%
  \selectfont}%
\fi\endgroup%
\begin{picture}(1702,1074)(1264,-223)
\end{picture}%
\qquad$\includegraphics[height=22mm]{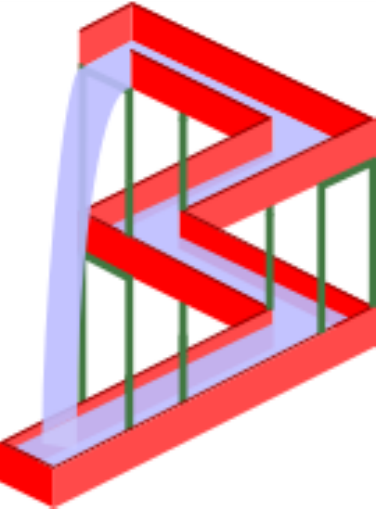}^\text{\footnotemark}$
}}
\begin{definition} Let $D$ be a tangle diagram. A ``cascade path'' along
$D$ is a directed path
that travels along strands of $D$ consistently with their orientation,
except at crossings where it can (but doesn't have to) drop from the upper
strand to the lower strand (but not the other way around). Two examples are
on the right. The diagram $D$ is called ``acyclic'' if it has no ``Escher
waterfalls'' --- that is, if no closed cascade paths can be drawn on
$D$.  On the right, the first example is
acyclic while the second isn't.
\footnotetext{Public domain waterfall image from
\url{https://commons.wikimedia.org/wiki/File:Waterfall.svg}.}
\end{definition}

\parpic[r]{\input{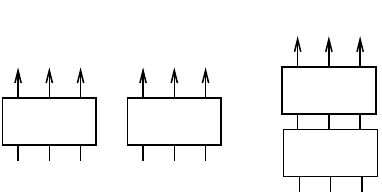_t}}
\begin{example} \label{exa:acyclic} Braid diagrams are
acyclic tangle diagrams, and OU tangle diagrams are acyclic tangle
diagrams. The stacking product (illustrated on the right) of two acyclic
tangle diagrams is again an acyclic tangle diagram.
\end{example} 
 
Glide moves and bulk glide moves as in
Figure~\ref{fig:Gliding} do not change the acyclicity of a tangle
diagram. Indeed by simple inspection the possible transits of a cascade
path through either of the sides of a glide move are $1\to 1$, $1\to 2$,
$1\to 3$, $2\to 2$, $3\to 2$, and $3\to 3$, with numbering as in Figure~\ref{fig:Gliding}. 

Note that if a tangle diagram is OU then no Reidemeister 3 (R3) moves
can be performed on it without breaking the OU property --- if one side
of an R3 move is OU, the other necessarily isn't. This suggests that
perhaps an OU form of a tangle diagram is unique up to Reidemeister 2
(R2) moves. We aim to prove this next.

\begin{theorem} \label{thm:acyclic} A tangle diagram $D$ can be made OU
using glide moves if and only if it is acyclic, and in that case, the
resulting OU tangle diagram, which we call $\Gamma(D)$, is uniquely determined.
\end{theorem}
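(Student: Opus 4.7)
I would split the proof into the forward direction, the existence of an OU form (termination of glides), and its uniqueness.

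\textbf{Forward direction.} This is immediate from two facts already noted in the text: OU diagrams are acyclic by Example~\ref{exa:acyclic}, and glide moves preserve acyclicity by the cascade-transit analysis ($1{\to}1$, $1{\to}2$, $1{\to}3$, $2{\to}2$, $3{\to}2$, $3{\to}3$) given just before the theorem statement. Hence if $D$ can be glided to an OU diagram, then $D$ itself is acyclic.

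\textbf{Existence.} Assume $D$ is acyclic. The plan is to exhibit a gliding strategy that provably terminates in an OU diagram, proceeding strand-by-strand: fix some order on the strands, and in phase $k$ apply glides only at UO patterns on strand $k$, halting when none remain. Two sub-claims support this. The \emph{preservation} claim is that once a strand $s$ is OU-sorted, no subsequent glide on another strand can unsort it---the only effect such a glide has on $s$ is to insert an O next to an existing O on its ``over partner'' or a U next to an existing U on its ``under partner,'' both of which stay inside $s$'s existing O- or U-block. The \emph{progress} claim is that each glide during phase $k$ strictly decreases an appropriate complexity attached to strand $k$. \emph{Finding this complexity is the main technical obstacle I foresee.} A naive UO-inversion count on $k$ can in fact increase when $k$ has self-crossings---Figure~\ref{fig:swirls} shows exactly this pathology in the non-acyclic setting---so one has to use acyclicity essentially. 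The key leverage is that acyclicity forces the ``over'' visit of every self-crossing of $k$ to precede its ``under'' visit, which in turn means the self-crossings created by a glide on $k$ are born in OU-compatible configuration on $k$. A careful local case analysis of the (few) ways the glide's ``strand 1'' and ``strand 2'' may coincide with strand $k$ should then yield the desired decreasing complexity, plausibly a lex order on the strand-wise UO-inversion vector refined by a tiebreaker derived from the cascade DAG.

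\textbf{Uniqueness.} To see that $\Gamma(D)$ is uniquely determined, I would invoke Newman's Lemma: given termination, it suffices to check local confluence. When two glide-ready UO patterns sit in disjoint local pictures the corresponding glides commute exactly. The essential cases are when the two UO patterns share a strand or a crossing, a finite list that one works through by exhibiting common descendants using one or two more glides. Termination plus local confluence give global confluence; since no glide applies to an OU diagram, any two OU outcomes of $D$ must coincide, and $\Gamma(D)$ is well-defined.
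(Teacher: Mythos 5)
Your overall architecture matches the paper's: the ``only if'' direction is exactly the paper's argument (OU diagrams are acyclic and glides preserve acyclicity, so anything glide-equivalent to an OU diagram is acyclic), and your uniqueness argument is the paper's in all but name --- termination plus a local-confluence check, where the only non-trivial overlaps are UO patterns that interact; the paper observes there are exactly two such overlap configurations and resolves both in Figure~\ref{fig:unique}. The one real divergence, and the one genuine gap, is in the existence step. You propose single glides organized strand-by-strand and explicitly leave the decreasing complexity unspecified, calling it your main obstacle. The paper avoids the issue entirely with \emph{bulk} glide moves: acyclicity guarantees that no U and no O of a maximal $\mathrm{U}\cdots\mathrm{U}\mathrm{O}\cdots\mathrm{O}$ block belong to the same crossing (this is your ``over-visit precedes under-visit'' observation, stated contrapositively), so the whole block is fixed in one bulk glide, which removes one such block from the strand being fixed and only replaces a single O by $\mathrm{O}\cdots\mathrm{O}$ and a single U by $\mathrm{U}\cdots\mathrm{U}$ on the partner strands; the total number of blocks therefore strictly decreases and termination is immediate.

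That said, your foreseen obstacle largely dissolves once you deploy your own key observation. For a single glide fixing a UO pair on strand $k$ (U at crossing $c_1$ under strand $i$, then O at crossing $c_2$ over strand $j$, with $c_1\neq c_2$ forced by acyclicity), the only changes to the O/U sequences are: $\mathrm{UO}\to\mathrm{OU}$ at the pair being fixed, $\mathrm{O}\to\mathrm{OOO}$ at $i$'s over-visit of $c_1$, and $\mathrm{U}\to\mathrm{UUU}$ at $j$'s under-visit of $c_2$. If $i$ or $j$ happens to be $k$ itself, acyclicity places that over-visit strictly before the U being fixed (resp.\ that under-visit strictly after the O being fixed), so each insertion lands adjacent to a letter of the same type and creates no new $\mathrm{U}$-to-$\mathrm{O}$ descent. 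Hence the plain total descent count drops by exactly one per glide --- no lexicographic refinement or cascade-DAG tiebreaker is needed, and the phase structure is likewise unnecessary. As written, though, the existence half of your proof is an outline whose load-bearing step is missing; you should either carry out this count explicitly or adopt the bulk-glide device.
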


\begin{proof} In an acyclic tangle diagram the U and the O of a UO
interval cannot belong to the same crossing (or else an Escher waterfall
is present) so the number of UO intervals can be reduced using bulk
glide moves as in the Froof of the Gliding Fheorem (\ref{fhm:every}). By the
observation above, the resulting diagram is still acyclic so the
process can be continued. 

For the ``only if'' part, note that
OU diagrams are acyclic so anything linked to OU diagrams by glide moves
must be acyclic too.

Now to show that $\Gamma(D)$ is unique, observe that when UO intervals are apart from each
other, their fixing is clearly independent. It remains to see what
happens when UO intervals are adjacent, and there are only two
distinct cases to consider. Both of these cases are shown
in Figure~\ref{fig:unique} along with their OU fixes, which are clearly
independent of the order in which the glide moves are performed. 
\end{proof}

\begin{figure}
\[ \resizebox{0.92\linewidth}{!}{\input{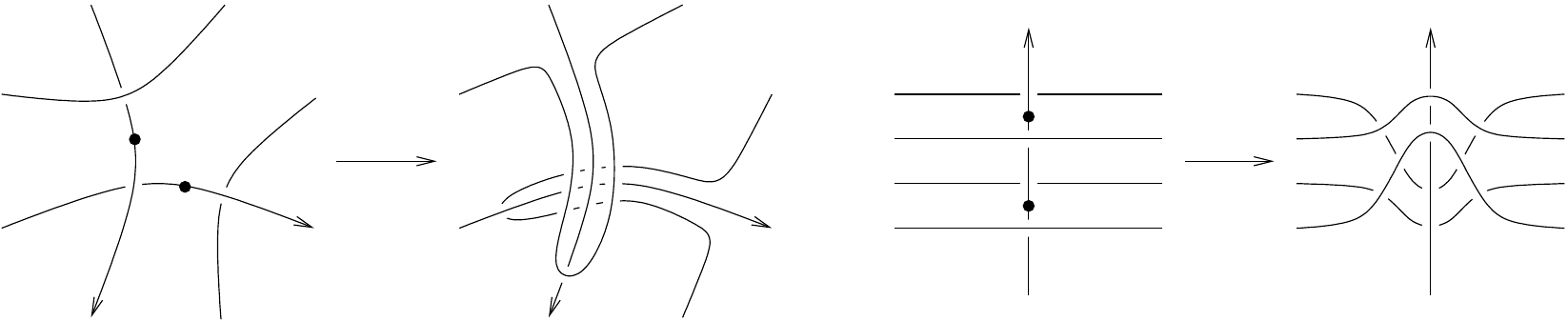_t}} \]
\caption{Two possibilities for ``interacting'' UO intervals (each marked with a~$\bullet$ symbol).}
\label{fig:unique}
\end{figure}

\begin{corollary} The stacking product followed by $\Gamma$ makes OU tangle diagrams into a monoid.\qed
\end{corollary}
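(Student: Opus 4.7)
The plan is to verify the three monoid axioms — closure, associativity, and existence of identity — all of which reduce to the uniqueness clause of Theorem~\ref{thm:acyclic}.

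For closure, given OU diagrams $T_1$ and $T_2$, Example~\ref{exa:acyclic} tells us that OU diagrams are acyclic and that the stacking product of acyclic diagrams is acyclic, so $T_1 \cdot T_2$ is acyclic. Theorem~\ref{thm:acyclic} then hands us a well-defined OU diagram $\Gamma(T_1 \cdot T_2)$, which is the proposed product in the monoid.

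The identity is the crossingless diagram on the appropriate boundary; it is vacuously OU, and since stacking it with any $T$ yields $T$ unchanged, no glide moves are needed and $\Gamma$ acts trivially.

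The bulk of the work is associativity, that is,
\[
\Gamma\bigl(\Gamma(T_1\cdot T_2)\cdot T_3\bigr)
 \;=\;
\Gamma\bigl(T_1\cdot \Gamma(T_2\cdot T_3)\bigr).
\]
My approach is to show that both sides equal $\Gamma(T_1\cdot T_2\cdot T_3)$. The stacking product of diagrams is associative on the nose, so $T_1\cdot T_2\cdot T_3$ is unambiguous; since it is a stacking product of three acyclic diagrams, it is acyclic, so $\Gamma(T_1\cdot T_2\cdot T_3)$ makes sense. The key observation is that glide moves are local: a glide move carried out inside the sub-diagram $T_1\cdot T_2$ of the stack $T_1\cdot T_2\cdot T_3$ is legitimately a glide move on the full diagram. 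Thus the sequence of glides that transforms $T_1\cdot T_2$ into $\Gamma(T_1\cdot T_2)$ also transforms $T_1\cdot T_2\cdot T_3$ into $\Gamma(T_1\cdot T_2)\cdot T_3$. This intermediate diagram is acyclic (by Example~\ref{exa:acyclic}, since $\Gamma(T_1\cdot T_2)$ is OU and stacking preserves acyclicity), and the subsequent glides bringing it to $\Gamma(\Gamma(T_1\cdot T_2)\cdot T_3)$ are likewise glide moves on what started as $T_1\cdot T_2\cdot T_3$. So $\Gamma(\Gamma(T_1\cdot T_2)\cdot T_3)$ is an OU diagram reachable from $T_1\cdot T_2\cdot T_3$ by glide moves, and the uniqueness clause of Theorem~\ref{thm:acyclic} forces it to coincide with $\Gamma(T_1\cdot T_2\cdot T_3)$. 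The argument for the right-bracketed side is symmetric.

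The only delicate point — and the ``main obstacle'' in this otherwise short proof — is the locality remark that a glide move confined to a sub-region of a diagram still qualifies as a glide move on the enclosing diagram. This is immediate from the fact that the glide moves of Figure~\ref{fig:Gliding} are defined by a local modification in a disc; once it is on the table, associativity is just an invocation of uniqueness, and the monoid structure drops out.
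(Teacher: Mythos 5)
Your proof is correct and fills in exactly the argument the paper leaves implicit (the corollary is stated with no written proof): closure and well-definedness come from Example~\ref{exa:acyclic} plus Theorem~\ref{thm:acyclic}, and associativity follows from the locality of glide moves together with the uniqueness clause of Theorem~\ref{thm:acyclic}, both sides being identified with $\Gamma(T_1\cdot T_2\cdot T_3)$. Nothing to correct.
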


\begin{definition} A tangle diagram is called reduced if its crossing number cannot be reduced
using only R1 and R2 moves.
\end{definition}

\begin{corollary} \label{cor:GammaIso} The map $\Gamma$ descends to a well-defined map
$\barGamma$ from ``acyclic tangle diagrams modulo Reidemeister moves that
preserve the acyclic property'' into ``reduced OU tangle diagrams''.
\end{corollary}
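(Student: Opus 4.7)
The plan is to define $\bar\Gamma([D])$ as the R1/R2 reduction of $\Gamma(D)$, and then to verify that this output is invariant under the acyclicity-preserving Reidemeister moves on $D$. A preliminary step is to check that R1 and R2 reductions preserve the OU property: removing a kink (R1) or a bigon (R2) only deletes crossings from the O-then-U sequence along each strand without reordering the remaining ones. Hence iterating R1/R2 reductions on $\Gamma(D)$ terminates in an OU diagram that is reduced in the sense of the preceding definition.

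For invariance I would proceed by case analysis on the Reidemeister move relating $D$ and $D'$. In the R1 case the extra kink on $D'$ becomes a removable R1 loop in $\Gamma(D')$, which is stripped away in the reduction step. In the R2 case the two cancelling crossings of $D'$ may be separated during gliding, but glide moves preserve the cancellation structure (as they merely insert additional O's or U's into the strands already marked O or U), so the resulting pair remains eliminable by R2 together with the additional R1/R2 debris. The main obstacle is the R3 case. The key observation, noted just before Theorem~\ref{thm:acyclic}, is that no R3 move can ever be performed between two OU diagrams; hence the effect of an R3 on $D$ must be reflected in the difference between $\Gamma(D)$ and $\Gamma(D')$ purely through R1 and R2 moves. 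To verify this rigorously I would enumerate the finitely many over/under configurations of the three strands of an R3 triangle compatible with acyclicity, apply gliding to both sides of each, and check that the outputs coincide after R1/R2 reduction.

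To ensure the map lands in \emph{reduced} OU diagrams in a well-defined manner, one also needs that R1/R2 reduction of an OU diagram produces a unique reduced form. This is a Diamond Lemma argument: termination is clear because crossing number strictly decreases, and local confluence reduces to inspecting the finitely many configurations in which two R1 or R2 reductions can overlap on an OU diagram. Combined with the uniqueness of $\Gamma(D)$ from Theorem~\ref{thm:acyclic}, this yields the well-definedness of $\bar\Gamma$. The hardest step I expect is the R3 case, where the bookkeeping of how glide moves interact with the R3 triangle must be carried out explicitly to confirm that all leftover discrepancies are indeed R1/R2 in nature.
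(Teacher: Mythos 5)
Your proposal is correct and follows essentially the same route as the paper: a case analysis over the Reidemeister moves in which R1 and R2 are shown to commute with (or trivialize) glides, the R3 case is reduced to an R1/R2 discrepancy between $\Gamma(D)$ and $\Gamma(D')$, and one finishes with uniqueness of the reduced representative in each R1/R2-equivalence class. The only difference is in the R3 case, where you propose a brute-force enumeration of configurations; the paper instead observes directly that exactly one side of an R3 move contains a UO interval within the scope of the move, and that gliding it away first (legitimate by the order-independence in Theorem~\ref{thm:acyclic}) yields the other side up to a single R2 — be aware that your intermediate claim that ``no R3 between OU diagrams'' already \emph{implies} the discrepancy is R1/R2-only is not by itself a valid deduction, so the enumeration you defer to is genuinely needed in your version.
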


\Needspace{34mm}
\parpic[r]{\input{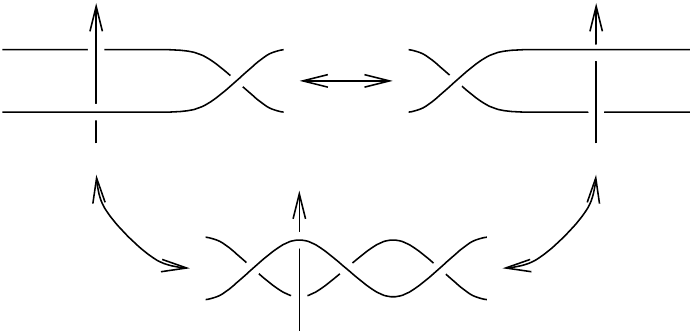_t}}
\par\noindent{\it Proof.} If two tangle diagrams differ by an R3 move then
exactly one of them has a UO interval within the scope of the R3 move, and
its elimination via a glide move (which may as well be performed first)
yields the other diagram, up to an R2 move (picture on right). Furthermore, R1 and/or R2 moves before a glide become R1 and/or
R2 moves after the glide, or they make the glide move redundant, see
examples in Figure~\ref{fig:R1R2}. So the end result of the gliding
process of an acyclic tangle is unique modulo R1 and R2 moves. Finally it is easy
to check that within any equivalence class of acyclic tangle diagrams modulo R1
and R2 moves that preserve the acyclic property, there is a unique reduced representative. \qed

\begin{figure}
\[ \input{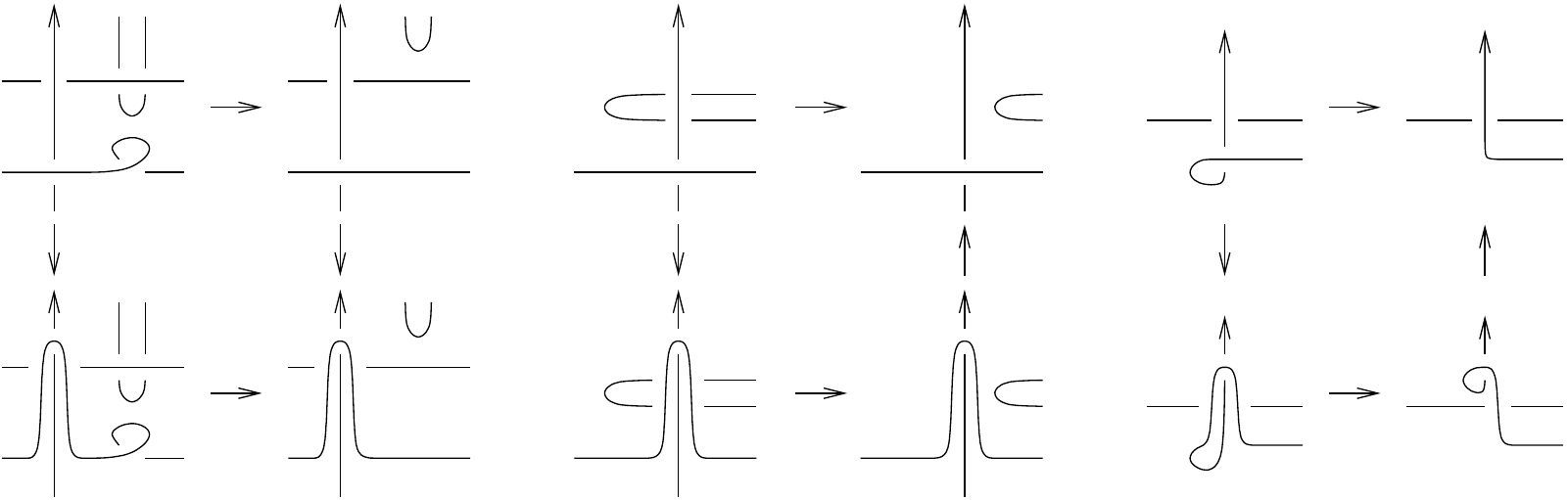_t} \]
\caption{R1 and R2 moves ``commute'' with glides (A), or they
make glides redundant (B), (C).} \label{fig:R1R2}
\end{figure}

\begin{corollary} \label{cor:actions}
Braids act on reduced OU tangle diagrams both on the left and on the right.
\end{corollary}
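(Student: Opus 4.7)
The plan is to define the left action of a braid $\beta$ on a reduced OU tangle diagram $T$ by
\[
\beta \cdot T := \barGamma(D_\beta T),
\]
where $D_\beta$ is any diagram representing $\beta$ and $D_\beta T$ denotes the stacking product with $T$. The right-hand side is defined because braid diagrams are acyclic (Example~\ref{exa:acyclic}), OU diagrams are acyclic, and the stacking product of acyclic diagrams is acyclic, so $\barGamma$ applies by Theorem~\ref{thm:acyclic} and Corollary~\ref{cor:GammaIso}. The right action $T\cdot\beta := \barGamma(TD_\beta)$ is defined symmetrically.

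To see this depends only on the braid class of $\beta$, note that two diagrams representing the same braid are connected by a sequence of R1, R2, R3 moves through braid diagrams, hence through acyclic diagrams, so they are connected by acyclicity-preserving Reidemeister moves; the same is true after stacking with $T$, and Corollary~\ref{cor:GammaIso} ensures the output of $\barGamma$ is unchanged. The trivial braid acts trivially because stacking it with $T$ yields a diagram that is R2/isotopy-equivalent to $T$ itself, and $\barGamma(T)=T$ since $T$ is already a reduced OU diagram.

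The action axiom $(\beta_1\beta_2)\cdot T = \beta_1\cdot(\beta_2\cdot T)$ reduces to the ``functoriality'' identity $\barGamma(D_1\,\barGamma(D_2)) = \barGamma(D_1 D_2)$ for acyclic tangle diagrams $D_1, D_2$. Here $D_2$ and $\barGamma(D_2)$ differ by a sequence of glide and R1/R2 moves, each local and acyclicity-preserving (as verified in the paragraph preceding Theorem~\ref{thm:acyclic} via the cascade-transit analysis). Locality implies these remain acyclicity-preserving Reidemeister moves after stacking with $D_1$, so $D_1 D_2$ and $D_1\,\barGamma(D_2)$ lie in the same acyclicity-preserving equivalence class, and Corollary~\ref{cor:GammaIso} gives the required equality. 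Applying this with $D_1 = D_{\beta_1}$ and $D_2 = D_{\beta_2} T$ establishes the axiom; since braids form a group, each $\beta$ acts by a bijection with inverse supplied by $\beta^{-1}$.

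The main step requiring care is the locality of glide moves: the cascade-transit argument from Theorem~\ref{thm:acyclic} must remain valid when a glide is performed as a sub-move inside a larger acyclic diagram. This is straightforward, since a cascade path through the glide region enters and exits along boundary segments whose transit options $1\to 1,\ 1\to 2,\ 1\to 3,\ 2\to 2,\ 3\to 2,\ 3\to 3$ are identical on the two sides of the glide, so closed cascade paths correspond bijectively before and after the move. This yields exactly the acyclicity-preservation needed to justify the stacking argument in the previous paragraph.
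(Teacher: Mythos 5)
Your proposal is correct and follows exactly the route the paper intends: its entire proof is ``Use the stacking product, the fact that braids are always acyclic, and Corollary~\ref{cor:GammaIso},'' and you have simply spelled out the well-definedness and associativity checks that this one-liner leaves implicit. (Only a cosmetic quibble: braid diagrams admit no R1 moves, so braid equivalence is generated by R2 and R3 alone, but this does not affect your argument.)
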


\begin{proof} Use the stacking product, the fact that braids are always acyclic, and Corollary~\ref{cor:GammaIso}.
\end{proof}

In summary, we have a commutative diagram as follows:
\[ \xymatrix@C=22mm{
  \calBD_n \ar@{^{(}->}[r]^\iota \ar[d] & \calACD_n \ar[r]^\Gamma \ar[d] & \calOUD_n \ar[d] \\
  \calB_n \ar[r]^<>(0.5)\bariota_<>(0.5){\text{Theorem~\ref{thm:iso}: }\cong} &
  \calAC_n \ar[r]^<>(0.5)\barGamma_<>(0.5){\cong} &
  \calROU_n
} \]
Here $\calBD_n$ denotes the monoid of braid diagrams with $n$ strands,
$\calACD_n$ denotes the monoid of acyclic tangle diagrams with $n$
strands, $\calOUD_n$ denotes the monoid of OU tangles diagrams with
$n$ strands, $\iota$ is the inclusion map, the vertical maps are
all ``reductions'': modulo braid moves in the first column, modulo
Reidemeister moves that preserve the acyclic property in the second
column, and modulo R1 and R2 in the third column (alternatively, the
third vertical map maps OU tangle diagrams to their unique reduced
form, and $\calROU_n$ is really a subset of $\calOU_n$), and finally,
$\bariota$ is the map induced by $\iota$ on the quotient $\calB_n$. Note
that $\barGamma$ is an isomorphism --- its inverse is the inclusion
$\calROU_n\to\calAC_n$ from Example~\ref{exa:acyclic}.

\begin{theorem}[Classical Isomorphism] \label{thm:iso}
$\barGamma\circ\bariota$ is an isomorphism (and hence also $\bariota)$.
\end{theorem}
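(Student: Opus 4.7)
The plan is to use Corollary~\ref{cor:GammaIso}, which gives that $\barGamma$ is an isomorphism; it then suffices to show $\bariota$ is an isomorphism, and I would treat injectivity and surjectivity in turn.

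For injectivity, I would argue that if $\bariota(b_1)=\bariota(b_2)$ in $\calAC_n$, then $b_1$ and $b_2$ are related by Reidemeister moves preserving acyclicity, a fortiori by arbitrary Reidemeister moves, so they are equivalent as tangles. Combined with the classical fact that $\calB_n$ embeds into the monoid of tangles---a consequence of the identification of $\calB_n$ with the mapping class group of the $n$-punctured disc---this forces $b_1=b_2$ in $\calB_n$.

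For surjectivity, given $[T]\in\calAC_n$ I would pass via $\barGamma$ to its reduced OU form $\tau=\Gamma(T)$ and construct a braid $b$ with $\bariota(b)=[T]$. The key planar observation, already used in the proof of Theorem~\ref{thm:acyclic}, is that in any OU diagram the O-arcs of distinct strands cannot cross each other---at a crossing between strands $i\neq j$, exactly one must be in its O-phase (over) and the other in its U-phase (under)---and likewise the U-arcs of distinct strands are pairwise disjoint; moreover each individual O-arc and U-arc is embedded by the same argument applied to self-crossings. I would parametrize each strand of $\tau$ by a time parameter $t\in[0,1]$ so that the transition point $\bowtie_i$ lies at $t=1/2$. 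At any crossing between strands $i\neq j$, the O-phase strand arrives at parameter $<1/2$ while the U-phase strand arrives at parameter $>1/2$, so the $n$ trajectories $\phi_t(p_i)$ in $D^2$ remain pairwise disjoint for all $t\in[0,1]$, defining a loop in the unordered configuration space $\mathrm{Conf}_n(D^2)/S_n$ and thereby a braid $b\in\calB_n$. By construction the strand of $b$ starting at the $i$-th bottom endpoint traces out the planar projection of the $i$-th strand of $\tau$, and the lift putting O-parts above and U-parts below recovers $\tau$ as a three-dimensional tangle, so $b$ and $T$ are tangle-isotopic.

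The hard part will be verifying that this tangle isotopy can be realized through acyclic diagrams, that is, $\bariota(b)=[T]$ in $\calAC_n$ and not merely as tangles. For this I would run the gliding algorithm on $\iota(b)$ and observe that the result is $\tau$---the parametrization used to construct $b$ was tailored to the OU structure of $\tau$, so glide moves retrace that structure. Thus $\Gamma(\iota(b))=\tau=\Gamma(T)$ in $\calROU_n$, and injectivity of $\barGamma$ (Corollary~\ref{cor:GammaIso}) forces $[\iota(b)]=[T]$ in $\calAC_n$. The crucial use of the OU condition is precisely the no-collision step at $t=1/2$: without it, two strands could reach a common crossing simultaneously and $b$ would not be well-defined.
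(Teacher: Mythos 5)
Your proposal is essentially correct and, for the hard (surjectivity) half, is really the paper's own argument in different clothing: the braid $b$ you build from $\tau$ by parametrizing each strand with its transition point at $t=1/2$ and reading off a path in configuration space is exactly the paper's map $\Lambda$ (there the U-parts are ``pulled down below the tahini level'' by arc-length, which is the same reparametrization), and your final claim that gliding $\iota(b)$ reproduces $\tau$ is exactly the paper's identity $(\barGamma\circ\bariota)\circ\Lambda=I_{\calROU}$. Where you genuinely diverge is the injectivity half: the paper gets it for free from the two-sided inverse $\Lambda$ (via the ``stirring with the green lines already pulled down'' argument showing $\Lambda\circ(\barGamma\circ\bariota)=I_{\calB}$), whereas you outsource it to the classical fact that $\calB_n\to\calT_n$ is injective. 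That fact is true and the paper itself invokes it later (Remark~\ref{rem:TwoSquares}, attributed to Artin, with a quick proof via fundamental groups of complements), so there is no circularity; but be aware that it does not follow formally from the mapping-class-group identification alone --- one needs the fundamental-group or combing argument --- and that your route makes the theorem depend on an external classical input that the paper's self-contained construction avoids. One small slip: the trajectories define a \emph{path}, not a loop, in the unordered configuration space, since the initial and final configurations of endpoints need not coincide.

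The one place where you are asserting rather than proving is the claim that running the gliding algorithm on $\iota(b)$ terminates at $\tau$ (``glide moves retrace that structure''). This is the crux, and your one-sentence justification is not an argument. In fairness, the paper's own treatment of this point is also only a sketch (Figure~\ref{fig:StirringIsGliding} plus the footnote hints), so your proposal sits at a comparable level of rigor; but if you want a complete proof you must actually exhibit, as the paper gestures at, how each elementary step of the ``stirring'' isotopy decomposes into glide moves and Reidemeister-free planar isotopies, and then check that stirring with the whisk $b=\Lambda(\tau)$ recreates the planar U-curves of $\tau$. Note also that your deduction $\Gamma(\iota(b))=\tau=\Gamma(T)\Rightarrow[\iota(b)]=[T]$ correctly uses that $\barGamma$ is invertible (its inverse being the inclusion $\calROU_n\to\calAC_n$), which is the right way to avoid the trap of arguing only that $b$ and $T$ are tangle-isotopic.
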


\noindent{\it Proof.} Figure~\ref{fig:stirring} contains a visual
description of $\barGamma\circ\bariota$. If $\beta\in\calB_n$ is a braid,
to compute $\barGamma(\bariota(\beta))$ make a whisk in the shape of
$\beta$ from black metal wires, and dip it slightly into a rectangular
pool of tahini sauce. Sprinkle lines of green ground parsley on top of
the tahini pool, connecting the ends of the whisk to the front side
of the pool, as in (A) of Figure~\ref{fig:stirring}. The green tahini
lines together with the black whisk lines together still make the shape
of $\beta$, and this will remain true throughout this proof.

\begin{figure}
\resizebox{\linewidth}{!}{\input{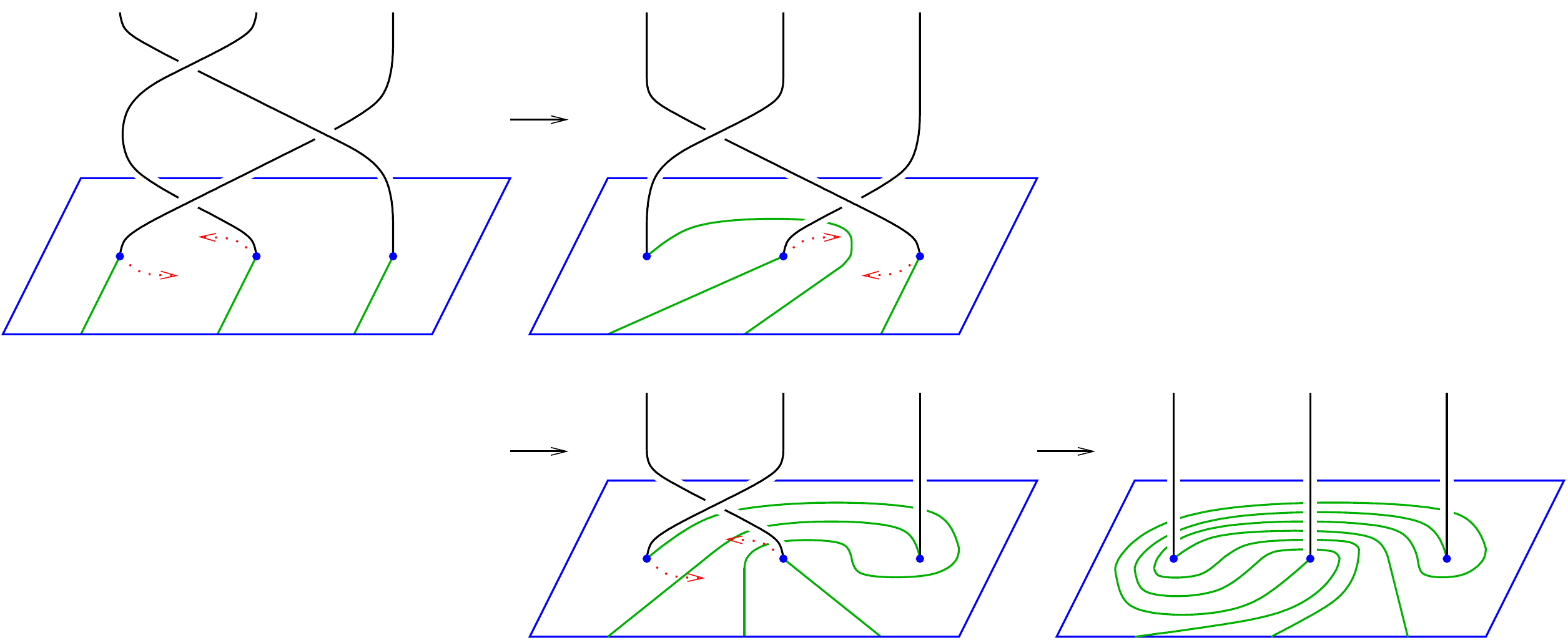_t}}
\caption{Stirring a pool of tahini sauce garnished with parsley lines using a braid whisk.} \label{fig:stirring}
\end{figure}

Now slowly push the whisk down and let it stir the sauce as in (B), (C),
and (D) of Figure~\ref{fig:stirring}. Less and less of the whisk remains
visible and at the same time the green parsley lines remain planar but
get more and more twisty. The end of the process is in (D) and it can be
interpreted as an OU tangle, by reading the picture from top to bottom:
the black whisk wires are all O, and the green parsley lines are all
U.\footnote{Readers may recognize this as the identification
of the braid group with the mapping class group of a punctured disk. See
e.g.~\cite[Theorem~1]{BirmanBrendle:BraidsSurvey}.}

Each step of this stirring process can be broken up into glide moves
and planar equivalences that require no Reidemeister moves, as shown
in a schematic manner in Figure~\ref{fig:StirringIsGliding}. Hence our
process computes $\barGamma(\bariota(\beta))$.

\begin{figure}
\resizebox{6.25in}{!}{\input{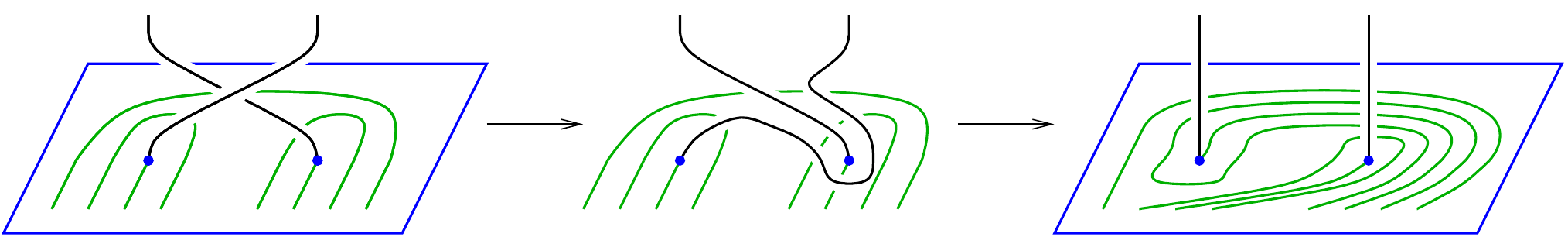_t}}
\caption{Stirring is gliding.} \label{fig:StirringIsGliding}
\end{figure}

Every OU tangle diagram $T$ has a black-green presentation as in (D)
of Figure~\ref{fig:stirring}. Indeed the O parts of $T$ cannot cross
each other so they can be drawn as a collection of straight parallel
black lines, and the U parts do cross the O parts so perhaps they
cannot be drawn as straight lines, but they still do not cross each
other so they make a collection of ``green'' lines, leading to a
picture as in (D) of Figure~\ref{fig:stirring} or as in (A) of
Figure~\ref{fig:ReverseGamma}.

\begin{figure}
\[ \includegraphics[width=6in]{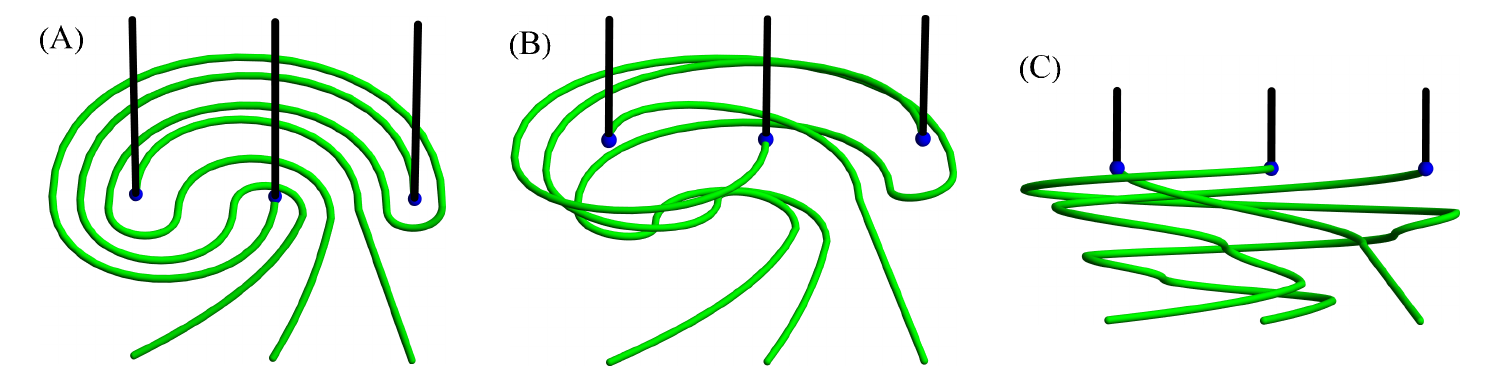} \]
\caption{The map $\Lambda$ turning an OU tangle into a braid.} \label{fig:ReverseGamma}
\end{figure}

Figure~\ref{fig:ReverseGamma} also shows how to define a map $\Lambda$
from OU tangles into braids: draw an OU tangle $T$ as in (A)
of Figure~\ref{fig:ReverseGamma}, and gradually pull down the green
strands to below the tahini level by an amount proportional to their
arc-length distance from their meeting points with the black strands,
while at the same time moving your viewpoint to be on the tahini plane,
as shown in (B) and (C) of Figure~\ref{fig:ReverseGamma}. At the end of
the process what you see is the braid $\Lambda(T)$.

Both compositions of $\barGamma\circ\bariota$ and of $\Lambda$ are identity maps\footnotemark, and hence
$\barGamma\circ\bariota$ is invertible. \qed

\footnotetext{Hints:
For $\Lambda\circ(\barGamma\circ\bariota) = I_{\calB}$ note that the
stirring process of Figure~\ref{fig:stirring} can be carried out with the
green lines already pulled down as in Figure~\ref{fig:ReverseGamma} and
when looking from the side, one sees a dance of braid diagrams, which
is an equivalence of braids. For $(\barGamma\circ\bariota)\circ\Lambda
= I_{\calROU}$ one has to start from a whisk $W$ of the form of (C) of
Figure~\ref{fig:ReverseGamma} (namely, a whisk that when considered from
above, as in (A) of Figure~\ref{fig:ReverseGamma}, appears to be made of
$n$ straight vertical bars and $n$ non-intersecting planar strands). Then
one has to show that stirring tahini with parsley lines using $W$ will recreate
the shape of $W$ (minus the vertical bars) in the parsley lines.
}

Hence, we have constructed a separating braid invariant:

\begin{corollary} \label{cor:braids}
$\barGamma\circ\bariota$ is a complete invariant of braids. \qed
\end{corollary}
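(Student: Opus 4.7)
The plan is to note that this corollary is an immediate consequence of the Classical Isomorphism (Theorem~\ref{thm:iso}) just proved. By ``complete invariant of braids'' one means a function $f$ on braid diagrams satisfying two properties: (a) $f$ descends to the quotient $\calB_n$, i.e.\ it is constant on each braid equivalence class, and (b) $f$ is injective on $\calB_n$, i.e.\ distinct braids receive distinct values. Both properties are built into the construction of $\barGamma\circ\bariota$.

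For (a), observe that $\barGamma\circ\bariota$ is by definition a map whose domain is $\calB_n$: the map $\bariota$ is the passage of $\iota\colon\calBD_n\to\calACD_n$ to the quotient by braid moves, and $\barGamma$ is the passage of $\Gamma$ to the quotient by acyclic-preserving Reidemeister moves, as established in Corollary~\ref{cor:GammaIso}. So well-definedness on $\calB_n$ is automatic. For (b), Theorem~\ref{thm:iso} asserts that $\barGamma\circ\bariota\colon\calB_n\to\calROU_n$ is an isomorphism, and in particular it is injective. This is precisely the separating property.

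The only thing worth adding is a remark on computability, so that this is a genuinely useful invariant rather than merely an abstract injection. Given any representative braid diagram $D$, the invariant value is computed by running the gliding algorithm of Theorem~\ref{thm:acyclic}, which terminates because braid diagrams are acyclic (Example~\ref{exa:acyclic}), and then selecting the unique reduced representative in $\calROU_n$. Since $\calROU_n$ is a concrete combinatorial set and equality there is decidable, two braid diagrams represent the same braid if and only if their reduced OU forms literally agree as diagrams.

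There is no main obstacle, as all the content was already packed into Theorem~\ref{thm:iso}; I would therefore write the proof as a single line pointing to that theorem and the elementary fact that any isomorphism is injective.
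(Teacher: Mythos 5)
Your proposal is correct and matches the paper exactly: the corollary is stated with an immediate \qed precisely because it follows at once from the Classical Isomorphism Theorem~\ref{thm:iso} (an isomorphism is in particular injective, and well-definedness on $\calB_n$ is built into the definitions of $\bariota$ and $\barGamma$ via Corollary~\ref{cor:GammaIso}). Your added remark on computability via the gliding algorithm is a sensible observation but not part of the paper's (empty) proof.
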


And in fact, in classical case, OU tangles are merely braids (though we will see in
Sections~\ref{sec:virtual} and~\ref{sec:more} that there is more to our story):

\begin{corollary} \label{cor:cOUisBraids}
All OU tangles are equivalent to braids. \qed
\end{corollary}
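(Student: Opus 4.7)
The plan is to use the map $\Lambda$ constructed in the proof of Theorem~\ref{thm:iso} and the identity $(\barGamma\circ\bariota)\circ\Lambda = I_{\calROU}$ established there. Given an OU tangle $T$, I first pass to its unique reduced representative in $\calROU_n$ (R1 and R2 moves are tangle isotopies, so this does not change its tangle-equivalence class), and then produce a candidate braid by setting $\beta := \Lambda(T)\in\calB_n$.

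The central step is to observe that the map $\barGamma$ is implemented by a sequence of \emph{glide moves}, and each glide move is, by definition, a specific tangle isotopy (it does not even invoke any Reidemeister move). Consequently, whenever $\barGamma(\bariota(\beta))=T$ as OU tangle diagrams, the diagrams $\bariota(\beta)$ and $T$ represent the same tangle. Combining this with $(\barGamma\circ\bariota)(\beta)=(\barGamma\circ\bariota)(\Lambda(T))=T$, we conclude that the OU tangle $T$ and the braid $\beta$ are equivalent as tangles.

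I do not anticipate a genuine obstacle here: all of the content has been packaged into Theorem~\ref{thm:iso} and the preceding diagram. The only point that warrants a moment's care is the distinction between equality of reduced OU diagrams and isotopy of the underlying tangles --- that is, making explicit that glide moves (and the R1, R2 moves used to reduce) are tangle isotopies. Once this is noted, the corollary follows in one line from the surjectivity of $\barGamma\circ\bariota$ onto $\calROU_n$, with the pre-image $\beta=\Lambda(T)$ furnishing the required braid representative.
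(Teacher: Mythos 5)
Your proposal is correct and matches the paper's intent: the corollary is stated with an immediate \qed precisely because it follows from the surjectivity of $\barGamma\circ\bariota$ established in Theorem~\ref{thm:iso}, with $\Lambda$ furnishing the braid preimage, and with the observation that glide moves and R1/R2 reductions are tangle isotopies. Your write-up simply makes explicit the one point the paper leaves implicit (diagram-level equality versus tangle equivalence), so it is essentially the same argument.
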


\begin{corollary} The two actions of Corollary~\ref{cor:actions} of
braids on reduced OU diagrams are simple and transitive.\qed
\end{corollary}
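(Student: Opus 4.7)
The plan is to reduce the statement to the familiar fact that the left and right regular actions of a group on itself are both simply transitive, using the isomorphism $\barGamma\circ\bariota\colon \calB_n\to\calROU_n$ from Theorem~\ref{thm:iso} to transport the braid group structure onto $\calROU_n$.

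First I would pin down the identity: let $e\in\calROU_n$ denote the reduced OU diagram of the trivial braid (the $n$ straight strands with no crossings), which is the unit of the stacking--then--$\Gamma$ monoid product on $\calROU_n$. From the definition of the left action in Corollary~\ref{cor:actions}, for any $\beta\in\calB_n$ one has $\beta\cdot e = \barGamma(\bariota(\beta)\cdot e) = \barGamma(\bariota(\beta))$, since stacking with $e$ is trivial. Because $\barGamma\circ\bariota$ is a bijection, this alone shows that the orbit of $e$ is all of $\calROU_n$, which already gives transitivity of the left action. More generally, writing any $T\in\calROU_n$ as $T=\barGamma(\bariota(\gamma))$ for the unique $\gamma\in\calB_n$ supplied by Theorem~\ref{thm:iso}, and using that stacking two braid diagrams equals their group product, $\iota(\beta_1)\cdot\iota(\beta_2)=\iota(\beta_1\beta_2)$, I compute
\[
  \beta\cdot T \;=\; \barGamma\bigl(\bariota(\beta)\cdot\bariota(\gamma)\bigr) \;=\; \barGamma\bigl(\bariota(\beta\gamma)\bigr),
\]
thereby identifying the left action with ordinary left multiplication on $\calB_n$. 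The right action is handled identically on the other side.

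Once both actions have been transported through $\barGamma\circ\bariota$ to the regular actions of $\calB_n$ on itself, simplicity and transitivity follow immediately from the elementary group-theoretic fact. The only point that deserves any care --- and what I expect to be the main technical check --- is verifying that the ``stack then reduce to OU form'' operation really makes $\barGamma\circ\bariota$ into a monoid isomorphism, i.e.\ that $\barGamma(\bariota(\beta_1)\cdot\bariota(\beta_2))$ genuinely equals the OU--monoid product of $\barGamma(\bariota(\beta_1))$ and $\barGamma(\bariota(\beta_2))$. This rests on the uniqueness clause of Theorem~\ref{thm:acyclic} applied to $\iota(\beta_1)\cdot\iota(\beta_2)$, which being a braid diagram is acyclic by Example~\ref{exa:acyclic}, so that the glide moves converge to a well-defined reduced OU diagram independently of whether one glides each factor first or stacks and glides afterwards. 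No other obstacle is anticipated.
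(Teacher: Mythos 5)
Your proposal is correct and is essentially the argument the paper intends: the corollary is stated with an immediate \qed because, once Theorem~\ref{thm:iso} identifies $\calROU_n$ with $\calB_n$ as a set with compatible left/right multiplication (via the monoid structure from Theorem~\ref{thm:acyclic} and the corollary following it), the two actions become the regular actions of $\calB_n$ on itself, which are simply transitive. Your explicit verification that $\beta\cdot T=\barGamma(\bariota(\beta\gamma))$ is exactly the check the paper leaves to the reader.
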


\draftcut \section{The Virtual Case} \label{sec:virtual}

Much is already written about virtual knot theory (see for example
\cite{Kauffman:VirtualKnotTheory, Kauffman:RotationalVirtualKnots,
Manturov:VirtualKnots}) here we give a quick summary of some basic ideas. Classical knots, braids, and tangles can all be
defined following the mould ``properly annotated planar graphs with
univalent and quadrivalent vertices and with properties PPP, modulo local
relations RRR''. Virtual knots, braids, and tangles are exactly the same,
except that the word ``planar'' is removed from the mould and otherwise
nothing is changed. See some examples in Figure~\ref{fig:vexamples}.

\begin{figure}
\[ \input{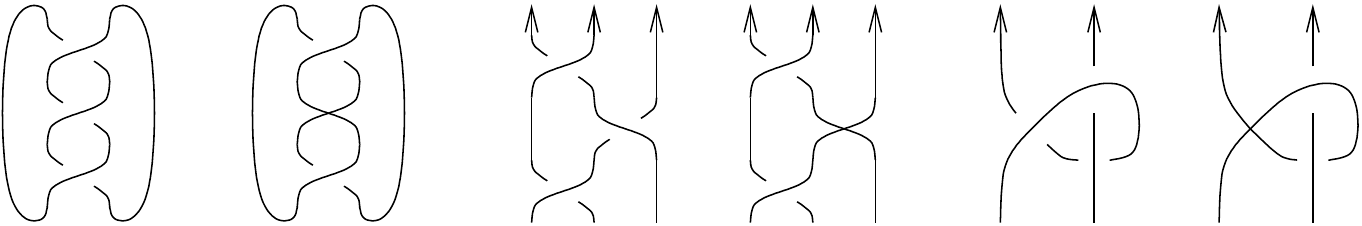_t} \]
\caption{
  A 3-crossing knot and a 2-crossing virtual knot, a 3-crossing braid and
  a 2-crossing virtual braid, and a 3-crossing tangle and a 2-crossing
  virtual tangle.
} \label{fig:vexamples}
\end{figure}

Note that all the virtual examples in Figure~\ref{fig:vexamples} contain
a feature like $\virtualcrossing$, often called a ``virtual crossing''. A
``virtual crossing'' is {\bf not a crossing}: It is merely an artifact of
the fact that when a non-planar graph is drawn on a piece of paper, some
edges will intersect, even though from a graph-theoretic
perspective these intersections are not vertices, and not part of the data of the graph.

In this paper virtual tangles and virtual braids are
always ``pure'': the ordering of the ends of strands around the boundary
of a planar domain has no graph theoretical meaning, for the planar
domain itself has no graph theoretic meaning. Yet it makes sense to
consider virtual objects whose strands are labelled by some finite set
$S$, and once this is done, virtual tangles become a monoid and virtual
braids become a group, where the product\footnote{ 
In the ``Geography vs.\ Identity'' language of~\cite{Talk:GvI},
compositions of classical tangles/braids are ``Geography'', because
they are defined using the placements of the ends being stitched, while
compositions of virtual tangles/braids are ``Identity'' because they
are defined using the identity of the ends being stitched.} of $T_1$ (or $B_1$) with $T_1$
(or $B_2$) is the disjoint union operation of graphs, followed by the
``stitching'' of the head of strand $a$ in $T_1$ (or $B_1$) to the tail of
strand $a$ in $T_2$ (or $B_2$), for every $a\in S$.

Thus the virtual (pure) braid group on $n$ strands is the group with
generators $\sigma_{ij}$ ``strand $i$ crosses over strand $j$ in a
positive crossing'' where $i\neq j\in\underline{n}$ and $\underline{n}$ is
some fixed set with $n$ elements (perhaps $\underline{n}=\{1,\ldots,n\}$),
and with relations matching the R3 move and the fact that crossings that
involve totally distinct strands commute:
\[ \vcalPB_n = \left\langle
    \sigma_{ij}\colon\ 
    \sigma_{ij}\sigma_{ik}\sigma_{jk}=\sigma_{jk}\sigma_{ik}\sigma_{ij}
    \text{ and }
    \sigma_{ij}\sigma_{kl}=\sigma_{kl}\sigma_{ij}
  \right\rangle,
\]
where it is understood that $i,j,k,l$ are arbitrary distinct elements of
$\underline{n}$. For example, the two braids in Figure~\ref{fig:vexamples}
are $\sigma_{12}\sigma^{-1}_{31}\sigma_{23}$ and $\sigma_{12}\sigma_{23}$
(the first was introduced as a classical braid, but it is also a pure
virtual braid).

We let $\vcalPBD_n$ denote the monoid of all virtual braid diagrams on $n$ strands: namely, the monoid of all
words in the generators $\sigma_{ij}^\pm$, with no relations.

With this said, everything in Section~\ref{sec:classical} up to but
not including the Classical Isomorphism Theorem (\ref{thm:iso}) makes sense and holds true in the
virtual case as well, for nothing there depends on the planarity of
diagrams. Hence we have a commutative diagram:
\[ \xymatrix{
  \vcalPBD_n \ar@{^{(}->}[r]^{\iota_v} \ar[d] & \vcalACD_n \ar[r]^{\Gamma_v} \ar[d] & \vcalOUD_n \ar[d] \\
  \vcalPB_n \ar[r]^<>(0.5){\bariota_v} \ar@/_6mm/[rr]_\Ch
    & \vcalAC_n \ar[r]^<>(0.5){\barGamma_v}_<>(0.5)\cong & \vcalROU_n
} \]
In this diagram everything was already defined or is the obvious virtual
analog of its counterpart in the classical case and does not need
a definition, except that we give a special name, the {\em Chterental
map} $\Ch\coloneqq\barGamma_v\circ\bariota_v$, to the composition along the bottom. Yet in contrast with
the Classical Isomorphism Theorem (\ref{thm:iso}) we have the Theorem~\ref{thm:vinj} below. This theorem is originally due to Oleg
Chterental~\cite{Chterental:VBandVCD, Chterental:Thesis} in a different context; our version is reformulated and independently proven, see a comparison in Discussion~\ref{disc:Chterental}. The proof presented in this section also leads to a new graph-valued braid and virtual braid invariant, see Discussion~\ref{disc:EG} and Section~\ref{sec:comp}.

\begin{theorem} \label{thm:vinj} (\cite{Chterental:VBandVCD, Chterental:Thesis}, independent proof
below)
$\Ch=\barGamma_v\circ\bariota_v$, and hence $\bariota_v$, is injective but not surjective.
\end{theorem}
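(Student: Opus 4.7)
The plan is to construct a candidate left inverse $\Lambda_v : \vcalROU_n \to \vcalPB_n$ to $\Ch$, proving injectivity, and to exhibit a small OU virtual tangle outside the image of $\Ch$, proving non-surjectivity. The strategy parallels the classical proof of Theorem~\ref{thm:iso}, but the loss of planarity forces a subtler construction.

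To define $\Lambda_v$, each classical crossing of an OU tangle $T$ is read as a generator $\sigma_{ij}^\epsilon$ of $\vcalPB_n$ (with $i$ the over-strand, $j$ the under-strand, $\epsilon$ the sign), and the generators are combined in an order compatible with the strand-wise orderings inherited from the OU structure. A naive attempt using a linear extension of the strand-wise ordering relation on crossings fails, since this relation can have cycles: for $\Ch(\sigma_{12}\sigma_{23}\sigma_{31})$, strand~1 records $c_{12} \prec c_{31}$, strand~2 records $c_{23} \prec c_{12}$, and strand~3 records $c_{31} \prec c_{23}$, forming a directed triangle. I would therefore define $\Lambda_v$ using the extraction graph of Discussion~\ref{disc:EG}, a richer combinatorial record that admits a canonical braid-word reconstruction even in the presence of such cycles (a virtual analogue of the classical ``stirring'' map~$\Lambda$). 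Invariance under R1 and R2 --- self-crossings contribute no generator, and $\sigma_{ij}^{+}\sigma_{ij}^{-}$ cancels --- then descends $\Lambda_v$ to $\vcalROU_n$.

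The identity $\Lambda_v \circ \Ch = \mathrm{id}_{\vcalPB_n}$ is then established by tracking the gliding algorithm step by step. Each glide, R3, or R1/R2 step used to form $\Ch(\beta)$ is a defining relation of $\vcalPB_n$: a commutation $\sigma_{ij}\sigma_{kl} = \sigma_{kl}\sigma_{ij}$, the R3 relation $\sigma_{ij}\sigma_{ik}\sigma_{jk} = \sigma_{jk}\sigma_{ik}\sigma_{ij}$, or a cancellation. Hence extracting $\Lambda_v$ from $\Ch(\beta)$ recovers a word representing $\beta$, and $\Ch$ is injective.

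For non-surjectivity, I would exhibit a small OU virtual tangle $T$ with $\Ch(\Lambda_v(T)) \neq T$, with the discrepancy detected by the extraction graph invariant: not every extraction graph arising from an OU virtual tangle is of the form associated to $\Ch(\beta)$ for some $\beta \in \vcalPB_n$. The main obstacle is the precise definition of $\Lambda_v$ in the presence of cyclic strand-wise orderings, which requires genuinely new input beyond the classical proof (the extraction graph structure), and the identification of an explicit small OU tangle whose extraction graph is not realized by any braid; once such an example is found, verification is a finite and direct computation.
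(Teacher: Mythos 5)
Your overall architecture --- recover the braid from its OU form by peeling off generators, organized by what the paper calls the extraction graph --- is the same architecture the paper uses. But your proposal has a genuine gap at its center: you invoke the extraction graph as ``a richer combinatorial record that admits a canonical braid-word reconstruction,'' and that canonicity is precisely the thing that must be proven; it is not available as a black box. The paper's proof consists almost entirely of establishing it. Concretely, one must (i) characterize exactly when a generator $\sigma_{ij}^{\pm1}$ ``divides'' a reduced virtual OU tangle $T$ (i.e.\ when extracting it lowers the reduced crossing number $\xi$), and prove the dichotomy that $\xi(gT)\neq\xi(T)$ always holds --- this is the Division Lemma~(\ref{lem:divquo}), obtained from the four-way case analysis (bmi)/(bmo)/(ami)/(amo) of Figure~\ref{fig:divquo}; (ii) prove that the extraction relation on $\vcalPB_n\times\vcalROU_n$ is Noetherian and satisfies the diamond condition, so that the Diamond Lemma~(\ref{lem:diamond}) gives a unique final element of each connected component. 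Step (ii) is the hard part: when two generators both divide $T$, completing the wedge to a diamond requires the analysis of ``slashed cinnamon rolls'' (Figures~\ref{fig:SCR} and~\ref{fig:Cases23}), where the two extraction paths can have arbitrarily large length and the braid identity certifying that they spell the same element of $\vcalPB_n$ (e.g.\ Equation~\eqref{eq:SCRBraids1}) is a nontrivial consequence of the relations, not a single defining relation. Your claim that ``each glide, R3, or R1/R2 step used to form $\Ch(\beta)$ is a defining relation of $\vcalPB_n$'' therefore does not suffice: extraction is not simply the gliding algorithm run backwards, and the paper instead proves $\Lambda_v\circ\Ch=\mathrm{id}$ by showing $(\beta,I)$ and $(I,\Ch(\beta))$ lie in the same connected component (by induction on word length, using part (1) of the Division Lemma) and then appealing to uniqueness of the final element.

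Your treatment of non-surjectivity is also vaguer than necessary. Once the machinery above is in place, the argument is immediate: for any $\beta\neq I$ one has $(I,\Ch(\beta))\toto(\beta,I)$ nontrivially, so $\Ch(\beta)$ must be divisible by at least one generator; Example~\ref{exa:indivisible} exhibits a $2$-crossing reduced virtual OU tangle $T_2$ with $\xi(\sigma_{12}^{\pm1}T_2)>\xi(T_2)$ and $\xi(\sigma_{21}^{\pm1}T_2)>\xi(T_2)$, hence divisible by nothing, hence not in the image. You do not need to compare extraction graphs of tangles and braids; indivisibility alone is the obstruction. Your observation that the naive strand-wise ordering of crossings can be cyclic (the $\sigma_{12}\sigma_{23}\sigma_{31}$ example) is a correct and worthwhile diagnosis of why a direct analogue of the classical $\Lambda$ fails, but the cure requires the full confluence argument, not just the bookkeeping device of the extraction graph.
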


Hence the following corollaries hold true:

\begin{corollary} \label{cor:vbraids} ~\cite{Chterental:VBandVCD, 
Chterental:Thesis}
$\Ch$ is a
complete invariant of virtual pure braids.\footnote{An earlier separation
result for virtual braids is in~\cite{GodelleParis:WordProblems}.} \qed
\end{corollary}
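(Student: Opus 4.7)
My plan is to handle the injectivity and non-surjectivity of $\Ch$ separately. Since $\barGamma_v$ is already an isomorphism in the virtual case (the virtual analogue of Corollary~\ref{cor:GammaIso}), injectivity of $\Ch$ is equivalent to injectivity of $\bariota_v$. I would prove injectivity by constructing an explicit left inverse $\Lambda_v$ of $\Ch$ on its image. Given $T = \Ch(\beta)$, the idea is to extract a word $w$ in the generators $\sigma_{ij}^{\pm 1}$ by linearly ordering the crossings of $T$ in a way compatible with the OU structure: each crossing $c$ must lie in the over-strand's O-phase and the under-strand's U-phase, a double constraint that essentially determines the order up to the swap of pairs of crossings sharing no strand. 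I would set $\Lambda_v(T) := [w] \in \vcalPB_n$; the identity $\Lambda_v \circ \Ch = \id$ then follows because each glide move decomposes as a composition of R2 and R3 moves, both of which are relations in $\vcalPB_n$, so the extracted word represents the same element as $\beta$.

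For non-surjectivity, my plan is to exhibit a concrete reduced OU virtual tangle outside $\mathrm{image}(\Ch)$. A natural candidate is the 3-strand tangle $T_0$ with exactly three positive crossings $\sigma_{12}$, $\sigma_{23}$, $\sigma_{31}$, arranged so that on each strand the unique O-crossing precedes the unique U-crossing; this induces a 3-cycle $1\to 2\to 3\to 1$ on the over/under relation on strand labels. To argue $T_0 \notin \mathrm{image}(\Ch)$, I would invoke the Extraction Graph framework introduced in Section~\ref{sec:Assorted}: for any braid-originated OU tangle, the extraction yields a consistent braid word (by the injectivity construction above), but the cyclic constraints imposed by $T_0$'s over/under relation obstruct any linearization, hence no braid can produce $T_0$ under gliding.

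The main obstacle will be establishing well-definedness of $\Lambda_v$ modulo the defining relations of $\vcalPB_n$. Different valid linearizations of the crossings of $T$ yield different words, and I must show that all such differences factor exactly through the two relation families: far-commutativity $\sigma_{ij}\sigma_{kl}=\sigma_{kl}\sigma_{ij}$ (capturing reorderings of crossings sharing no strand) and R3 $\sigma_{ij}\sigma_{ik}\sigma_{jk}=\sigma_{jk}\sigma_{ik}\sigma_{ij}$ (capturing alternative gliding histories with identical OU form) — with no additional identifications imposed. This requires a careful local analysis of how the OU condition forces the relative order of crossings sharing a strand, and of how R3 emerges naturally as the sole obstruction beyond commutativity. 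Once this is established, both the injectivity and the characterization of the image follow from the same extraction framework.
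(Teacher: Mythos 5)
Your reduction of injectivity of $\Ch$ to injectivity of $\bariota_v$ via $\barGamma_v$ is fine, but the left inverse $\Lambda_v$ you build cannot exist as described, because it rests on the premise that $\Ch(\beta)$ has the same crossings as $\beta$, merely reordered. It does not: glide moves create new crossings (a single glide turns an O into an OOO on one strand and a U into a UUU on another), and the reduced OU form of a braid generally has strictly more crossings than the braid word has letters --- exponentially more in the worst case (Section~\ref{ssec:CC}). The $2$-twist braid $\beta=\sigma_{12}\sigma_{21}$ already defeats the construction: its reduced OU form is the cinnamon roll $\CR_2$, which has \emph{three} crossings, and a signed-count argument forces exactly one of them to be a crossing of strand $1$ with itself. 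A self-crossing corresponds to no generator $\sigma_{ij}$ with $i\neq j$, so your word $w$ is not even defined on $\Ch(\beta)$; and even setting that aside, $\vcalPB_2$ is free on $\sigma_{12},\sigma_{21}$, so no three-letter word in the generators can equal the reduced two-letter word $\sigma_{12}\sigma_{21}$ (a parity obstruction). The step ``each glide move decomposes as R2 and R3 moves, so the extracted word represents the same element'' conflates equivalence of \emph{tangles} with equality of words read off their \emph{crossing sets}: R2 moves change which crossings exist, and the crossings of the final reduced OU diagram are in no natural bijection with the letters of $\beta$. Your non-surjectivity argument inherits the same flaw, since it contrasts braid-originated tangles, which are supposed to admit consistent linearizations, with your $T_0$, which does not; but $\Ch(\sigma_{12}\sigma_{21})=\CR_2$ itself admits no linearization in your sense.

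For comparison, the paper inverts $\Ch$ on its image not by reading off crossings but by iterated \emph{division}: a generator $g$ divides a reduced OU tangle $T$ when $\xi(g^{-1}T)<\xi(T)$; the Division Lemma (\ref{lem:divquo}) characterizes exactly when this happens (strand $i$ must run parallel to the O-part of strand $j$ up to its transition point and then cross over it) and identifies the quotient; and Newman's Diamond Lemma (\ref{lem:diamond}), after a substantial case analysis of overlapping divisors (Figures~\ref{fig:SCR}--\ref{fig:Square}), guarantees that repeated division terminates at a well-defined final pair, which for $T=\Ch(\beta)$ is $(\beta,I)$ --- that is the recovery of $\beta$. The extraction graph you invoke is defined through this division relation, not through a linear ordering of crossings. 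Non-surjectivity then follows from one indivisible example, the two-crossing tangle $T_2$ of Example~\ref{exa:indivisible}. If you want to salvage your outline, the ``linearization'' must be replaced by this crossing-number-decreasing extraction, and the well-definedness issue you correctly flag becomes precisely the diamond condition, which is where essentially all of the work lies.
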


\begin{corollary} ~\cite{Chterental:VBandVCD, 
Chterental:Thesis} The two actions of virtual pure braids on reduced virtual
OU diagrams are simple but not transitive.\qed
\end{corollary}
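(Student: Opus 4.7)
The plan is to parallel the proof of the Classical Isomorphism Theorem (\ref{thm:iso}): construct a combinatorial left inverse $\Lambda_v : \vcalROU_n \to \vcalPB_n$ to $\Ch$, and separately exhibit an element of $\vcalROU_n$ not in the image. The topological ``stirring'' argument used classically relies on the mapping-class-group interpretation of the braid group on a planar disk, which is unavailable in the virtual setting, so $\Lambda_v$ must be constructed algebraically.

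For injectivity, I would define $\Lambda_v$ on a reduced OU virtual tangle $T$ by reading off its real crossings in any order compatible with the OU partial order on each strand (on strand $i$, all overs precede all unders), recording each crossing as $\sigma_{ij}^{\pm 1}$, and interpreting the resulting word as an element of $\vcalPB_n$. Well-definedness rests on the observation that any two compatible orderings differ by swaps of crossings whose strand sets are disjoint, matching the commutation relation $\sigma_{ij}\sigma_{kl}=\sigma_{kl}\sigma_{ij}$. To establish $\Lambda_v \circ \Ch = \mathrm{id}_{\vcalPB_n}$, one recognizes each single glide move in $\Gamma_v$ as a diagrammatic instance of the braid relation $\sigma_{ij}\sigma_{ik}\sigma_{jk}=\sigma_{jk}\sigma_{ik}\sigma_{ij}$ (with bulk glides being composites of these and commutations), so that gliding any $w \in \vcalPBD_n$ to reduced OU form and extracting returns $w$ modulo the defining relations of $\vcalPB_n$. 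With $\barGamma_v$ already an isomorphism (the virtual analog of Corollary~\ref{cor:GammaIso}), this yields injectivity of $\Ch$ and hence of $\bariota_v$.

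For non-surjectivity, I would observe that no generator $\sigma_{ij}^{\pm 1}$ of $\vcalPB_n$ involves a single strand (the indices must be distinct), and moreover that in the virtual gliding process a glide that would require ``strand 1 $=$ strand 2'' in Figure~\ref{fig:Gliding} is resolved through virtual rather than real self-crossings. Consequently $\mathrm{Im}(\Ch)$ is contained in the subset of $\vcalROU_n$ consisting of reduced OU virtual tangles with no real self-crossings. A non-image witness is then any non-trivial reduced OU virtual tangle containing a real self-crossing, which---in contrast to the classical setting of Corollary~\ref{cor:cOUisBraids}, where all such configurations R1-reduce to the trivial tangle---does exist genuinely in the virtual category. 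The hardest step in the whole plan is verifying well-definedness of $\Lambda_v$ under the full complement of virtual Reidemeister and detour moves on reduced OU virtual tangles, and especially checking that rerouting an over-arc or under-arc through virtual crossings does not alter the extracted element of $\vcalPB_n$; everything else reduces to tracking the defining relations of $\vcalPB_n$ through diagrammatic moves.
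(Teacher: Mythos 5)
Your proposal reduces the corollary to Chterental's Theorem (\ref{thm:vinj}) --- simplicity of the actions from injectivity of $\Ch$ and non-transitivity from non-surjectivity --- which is exactly how the paper derives it; but your proposed proof of that theorem has gaps in both halves. For injectivity, the map $\Lambda_v$ that reads off the real crossings of a reduced OU diagram in a strand-compatible order is not a left inverse of $\Ch$, and is not even everywhere defined: reduced OU tangles in the image of $\Ch$ routinely contain real \emph{self}-crossings, which correspond to no generator of $\vcalPB_n$. For instance $\CR_4=\Ch\bigl((\sigma_{12}\sigma_{21})^2\bigr)$ has $7$ real crossings, while every diagram of the $4$-twist has an even number of crossings between strands $1$ and $2$; hence an odd number of the $7$ are self-crossings. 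Even away from self-crossings, the read-off word is only a diagram invariant: a glide move is an R3 composed with an R2 (see the proof of Corollary~\ref{cor:GammaIso}), and the cancelling pair created by that R2 is \emph{not} adjacent along the strands (on strand $1$ an O becomes OOO with the old crossing in the middle), so the word changes by inserting $\sigma\cdots\sigma^{-1}$ with material in between --- your ``disjoint strand sets commute'' argument does not cover this, and no argument is given that the group element survives. The paper's recovery of $\beta$ from $\Ch(\beta)$ is instead by repeated \emph{extraction of divisors} (the Division Lemma~\ref{lem:divquo}), and the well-definedness of the extracted braid is a genuine theorem requiring the Diamond Lemma~\ref{lem:diamond} together with a case analysis of diamonds; different extraction paths yield words of different lengths related by nontrivial identities such as Equation~\eqref{eq:SCRBraids1}, which is strong evidence that no naive crossing-reading can work.

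The non-surjectivity half fails for the same reason: the image of $\Ch$ is \emph{not} contained in the self-crossing-free reduced OU tangles (again, the cinnamon rolls), and glide moves do not convert real self-crossings into virtual ones --- they are isotopies of the underlying tangle. Your proposed witness is therefore not a witness. The paper's witness is the tangle $T_2$ of Example~\ref{exa:indivisible}, which is divisible by no generator $\sigma_{ij}^{\pm1}$; that this excludes it from the image rests on the fact, proved via the Division Lemma, that $(I,\Ch(\beta))\toto(\beta,I)$ for every $\beta$, so every nontrivial element of the image admits at least one divisor. Both halves of your argument thus need to be replaced by the extraction machinery (or an equivalent).
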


\begin{corollary} ~\cite{Chterental:VBandVCD, 
Chterental:Thesis} Not all virtual OU tangles are equivalent to virtual pure braids. \qed
\end{corollary}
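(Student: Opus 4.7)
The plan is to prove injectivity by constructing an explicit left inverse $\Lambda_v$ to $\Ch$, and non-surjectivity by exhibiting an OU tangle diagram that no pure virtual braid can produce. Since virtual braids do not have genuine planar whisk realizations, the tahini-stirring argument of Theorem~\ref{thm:iso} cannot be transplanted directly; a combinatorial reconstruction, reading the crossings of the OU diagram in a canonical order, must replace it.

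To define $\Lambda_v$ on a reduced virtual OU tangle diagram $T\in\vcalROU_n$, I would form its \emph{extraction digraph} $EG(T)$: vertices are the crossings of $T$, and for each strand $i$ I include a directed edge from $c$ to $c'$ whenever $c$ precedes $c'$ along $i$ in the OU traversal (i.e.\ both in the O-phase of $i$ with $c$ first, both in its U-phase with $c$ first, or $c$ in the O-phase and $c'$ in the U-phase of $i$). Whenever $EG(T)$ is acyclic --- as is the case when $T=\Ch(\beta)$, because the original time-order of $\beta$ furnishes a linear extension --- I pick any topological sort of $EG(T)$, interpret each crossing as the corresponding generator $\sigma_{ij}^{\pm1}$, and compose them in that order; this is $\Lambda_v(T)$.

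The principal step is well-definedness. Any two linear extensions of $EG(T)$ differ by a sequence of swaps of adjacent incomparable pairs, and two crossings are incomparable in $EG(T)$ precisely when they share no strand, so by the far-commutativity relation $\sigma_{ij}\sigma_{kl}=\sigma_{kl}\sigma_{ij}$ the resulting element of $\vcalPB_n$ is unambiguous. Virtual Reidemeister moves do not alter the extraction data, so $\Lambda_v$ is invariant under them as well. Then $\Lambda_v\circ\Ch=\mathrm{id}_{\vcalPB_n}$ follows by construction: the time-order of the generators of $\beta$ is itself a linear extension of $EG(\Ch(\beta))$, so reading the crossings in that order recovers $\beta$ up to far-commutativity. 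The main obstacle I anticipate is the careful bookkeeping for R1 and R2 reductions used in passing from $\vcalACD_n$ to $\vcalROU_n$ --- one must verify that any crossings cancelled along the way correspond to trivial insertions or deletions of generator-pairs that are invisible in $\vcalPB_n$ (and that R1 self-kinks, which have no associated generator at all, can be shed without changing $\Lambda_v$).

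For non-surjectivity, I would exhibit an OU diagram whose extraction digraph contains a directed cycle, forcing $T\notin\mathrm{im}(\Ch)$. A minimal candidate is a three-strand, three-crossing OU tangle with crossings $c_1,c_2,c_3$, where $c_k$ is strand $k$ crossing over strand $k{+}1$ (indices mod $3$), arranged so that $c_k$ lies in the O-phase of strand $k$ and the U-phase of strand $k{+}1$. Each strand then has exactly one O- and one U-crossing, and $EG(T)$ contains the directed cycle $c_1\to c_3\to c_2\to c_1$ (via strands $1$, $3$, $2$ respectively); hence no pure virtual braid can have $T$ as its OU form. Drawing $T$ in the virtual plane is unproblematic --- virtual crossings absorb any planarity obstruction --- and the cyclicity of $EG(T)$ is a direct combinatorial obstruction to lifting $T$ to a virtual braid.
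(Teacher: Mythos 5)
The approach does not work, and the failure is at the central step. You assume that the crossings of the reduced OU form $\Ch(\beta)$ are in bijection with the letters of a word for $\beta$, and that the strand-precedence digraph on those crossings is acyclic with the time-order of $\beta$ as a linear extension. Neither holds. Gliding does not merely permute the crossings of a braid diagram: it creates new ones (Section~\ref{ssec:CC} notes the growth can be exponential; the $4$-crossing $2$-twist braid already has a $7$-crossing reduced OU form $\CR_4$). Worse, the whole point of the OU form is that each strand reads all its O's \emph{before} all its U's, and this reordering generically forces directed cycles in your digraph. Concretely, $\CR_2=\Ch(\sigma_{12}\sigma_{21})$ has three crossings $a,b,c$, all between strands $1$ and $2$, with strand $1$ reading $a,b$ (over) then $c$ (under) and strand $2$ reading $c$ (over) then $a,b$ (under); your digraph contains the cycle $a\to b\to c\to a$. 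So $\Lambda_v$ is undefined on this element of the image of $\Ch$, the identity $\Lambda_v\circ\Ch=\mathrm{id}$ fails (even where defined, the word read off has the wrong length), and --- fatally for the corollary at hand --- ``the extraction digraph has a directed cycle'' cannot certify that a tangle lies outside the image of $\Ch$, since tangles in the image have such cycles too. Your candidate $3$-strand counterexample may well lie outside the image, but the argument you give for it proves nothing.

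The paper's route is quite different and is designed exactly to cope with the fact that crossings are not conserved. It defines divisibility of a reduced OU tangle $T$ by a generator $g=\sigma_{ij}^{\pm1}$ via a strict drop in the reduced crossing number $\xi$, characterizes divisibility combinatorially (the Division Lemma~\ref{lem:divquo}: strand $i$ must run parallel to the O-part of strand $j$ and then cross over it), and shows via the Diamond Lemma~\ref{lem:diamond} --- after an exhaustive analysis of interacting divisors, the ``slashed cinnamon rolls'' --- that repeatedly extracting divisors from $\Ch(\beta)$ terminates uniquely at $(\beta,I)$. Injectivity follows, and non-surjectivity follows because every nontrivial $\Ch(\beta)$ must therefore admit at least one divisor, whereas Example~\ref{exa:indivisible} exhibits a $2$-crossing virtual OU tangle $T_2$ divisible by no generator. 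If you want to salvage your plan, the divisibility relation is the correct replacement for your topological sort: it is the paper's way of deciding which generator can be read off ``first,'' precisely because no naive ordering of the crossings of the OU diagram survives the gliding process.
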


\begin{discussion} \label{disc:plan}
The rest of this section is devoted to a proof of
Chterental's Theorem (\ref{thm:vinj}). The idea is to ``extract'' as much of a virtual
braid out of a virtual OU tangle $T$ as possible, by extracting one
braid generator at a time while reducing the complexity of what remains
of $T$. The process won't always invert $\Ch$
(for $\Ch$ is not invertible), yet it will
invert $\Ch$ on the image of virtual braids,
which is enough. The main tools will be the Division Lemma (\ref{lem:divquo}) which
gives a necessary and sufficient condition for the extraction of one
braid generator, and the Diamond Lemma (\ref{lem:diamond}), which
will guarantee that this extraction process always terminates with a
well-defined answer.
\end{discussion}

\begin{definition} If $T\in\vcalAC_n$ is a virtual acyclic tangle,
let $\xi(T)$ denote the crossing number of $\barGamma_v(T)$, its R1-
and R2-reduced OU form (not counting virtual crossings, of course). We say
that a virtual braid $\beta\in\vcalPB_n$ divides a virtual acyclic
tangle $T\in\vcalAC_n$, and write $\beta\mid T$, if when $\beta$ is
extracted out of $T$, this reduces the crossing number. In other words,
if $\xi(\beta^{-1}T)<\xi(T)$. In that case, we call $\beta^{-1}T$ the
quotient of $T$ by $\beta$.
\end{definition}

\parpic[r]{
  \def\c{$\sigma_{12}$}
  \input{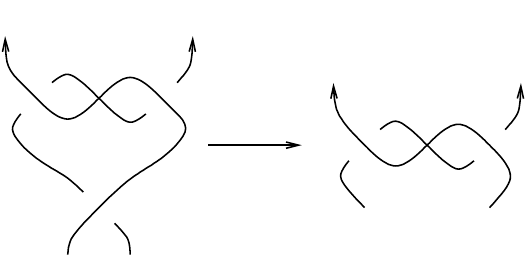_t}
}
\begin{example} \label{exa:indivisible} The figure on the right shows two virtual OU
tangles, $T_1$ and $T_2$. We have that $\sigma_{12}\mid T_1$ and
$\sigma_{12}^{-1}T_1=T_2$. On the other hand, $T_2$ is not divisible
by anything, as it can be readily verified that $\xi(T_2)=2$
while $\xi(\sigma_{12}^{\pm 1}T_2)>2$ and $\xi(\sigma_{21}^{\pm 1}T_2)>2$.
\end{example}

\Needspace{30mm} 
\parpic[r]{
  \def\a{$\sigma_{23}$} \def\b{$\sigma_{13}$} \def\c{$\sigma_{12}$}
  \input{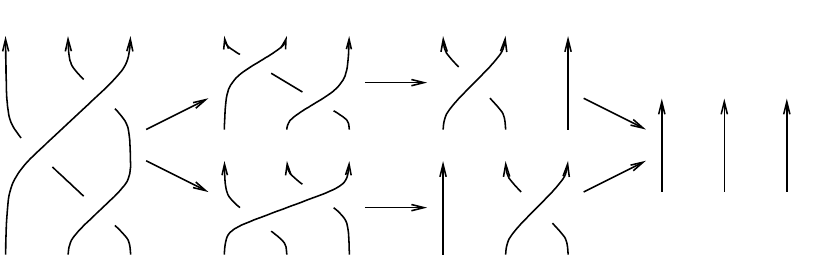_t}
}
\begin{example} \label{exa:GarsideHexagon} The figure on the right shows in its left part
the Garside ``positive half twist'' braid on 3 strands, which
happens to be OU in its given presentation, fit within a hexagon
summarizing its five divisors $\sigma_{12}$, $\sigma_{23}$,
$\sigma_{12}\sigma_{13}$, $\sigma_{23}\sigma_{13}$,
$\sigma_{12}\sigma_{13}\sigma_{23} = \sigma_{23}\sigma_{13}\sigma_{12}$,
and the five resulting quotients. This hexagon is also an example of an
extraction graph; see Discussion~\ref{disc:EG}.
\end{example}

Please bear with us and read the following two examples carefully,
as they play a role in the proof of Chterental's Theorem (\ref{thm:vinj}).

\begin{example} \label{exa:CinnamonRolls} The $k$-twist braids are the braids
$(\sigma_{12}\sigma_{21})^{k/2}$ (for even $k$) or
$\sigma_{21}(\sigma_{12}\sigma_{21})^{(k-1)/2}$ (for odd $k$). They
are shown along with their reduced OU forms, the Cinnamon Roll
tangles $\CR_k$, in Figure~\ref{fig:CinnamonRolls}. Clearly,
$\sigma_{21}\mid \CR_{2k+1}$ with $\sigma_{21}^{-1}\CR_{2k+1}=\CR_{2k}$
and $\sigma_{12}\mid \CR_{2k}$ with $\sigma_{12}^{-1}\CR_{2k}=\CR_{2k-1}$,
and so we have the following chain of divisibilities and quotients:
\begin{equation} \label{eq:CinnamonRolls}
   \xymatrix{
    \cdots \ar[r] &
    \CR_4 \ar[r]^{\sigma_{12}} &
    \CR_3 \ar[r]^{\sigma_{21}} &
    \CR_2 \ar[r]^{\sigma_{12}} &
    \CR_1 \ar[r]^{\sigma_{21}} &
    \CR_0.
  }
\end{equation}
\end{example}

\begin{figure}
\[
  \input{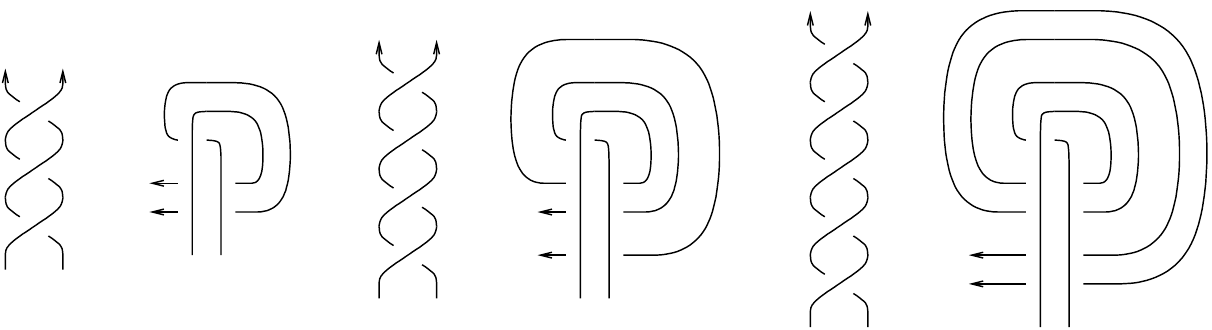_t}\qquad
  \raisebox{8mm}{\includegraphics[height=0.75in]{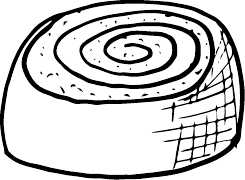}}
\]
\caption{
  The 3-twist, 4-twist, and 5-twist braids, and their reduced OU forms the
  Cinnamon Roll tangles $\CR_3$, $\CR_4$, and $\CR_5$. The equivalence
  of the twist braids with their respective cinnamon rolls should
  be clear to anyone who has observed how a kink in a band becomes a
  twisted band upon tugging. The bonus cinnamon roll was purchased from
  \url{https://thenounproject.com/}.
} \label{fig:CinnamonRolls}
\end{figure}

\begin{example} \label{exa:SCR} A slashed cinnamon roll is a cinnamon roll with an
extra always-over strand separating the O-parts of its two curving strands, as shown
in (A) of Figure~\ref{fig:SCR}. A slashed cinnamon roll
is divisible by both $\sigma_{21}^{-1}$ and $\sigma_{23}$, and
the quotients, after reductions by many R2 moves, are (B) and (C) of
Figure~\ref{fig:SCR}. These quotients are themselves cinnamon rolls
(with extras on the side), and so they can be divided and reduced further as
in Example~\ref{exa:CinnamonRolls}, leading to (D) and (E)
of Figure~\ref{fig:SCR}. Note also that (D)
can be reduced to (E) by dividing
first by $\sigma_{23}$ and then by $\sigma_{12}$, as shown.
Finally, note that we have two paths going from (A) to (E), via (B)
and (D) and via (C), and that each defines a braid word by reading the
divisors along it. We claim that these two braid words are equal in
$\vcalPB_3$. Namely, that
\begin{equation} \label{eq:SCRBraids1}
   \sigma_{21}^{-1}\sigma_{13}\sigma_{31}\sigma_{13}\sigma_{31}\sigma_{13}\sigma_{23}\sigma_{21}
  = \sigma_{23}\sigma_{13}\sigma_{31}\sigma_{13}\sigma_{31}\sigma_{13}.
\end{equation}
Indeed, to see the equality, slide strand 2 across the 5-twist
in the following picture\footnote{The equality also follows from
Chterental's Theorem (\ref{thm:vinj}), but we haven't proven Chterental's Theorem
yet, and in fact, the proof of Chterental's Theorem depends on the equality.}:
\[ \input{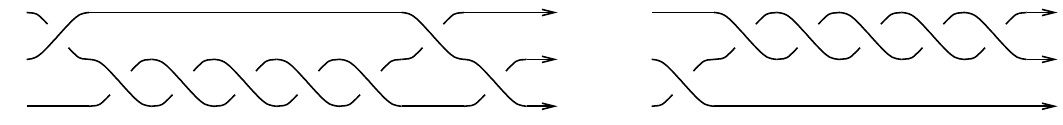_t}. \]
\end{example}

\begin{figure}
\[ \def\s#1{$\sigma_{#1}$} \def\sb#1{$\sigma_{#1}^{-1}$}
  \input{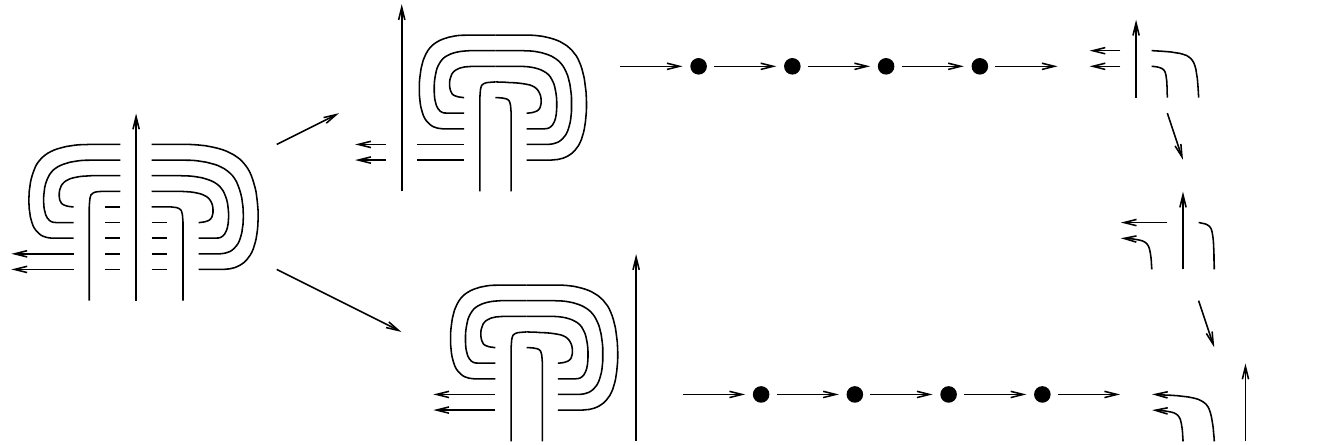_t}
\]
\caption{A slashed cinnamon roll and its quotients down to the identity.} \label{fig:SCR}
\end{figure}

\begin{discussion} \label{disc:divquo} Next, we would like to understand
precisely when does a braid generator $\sigma_{ij}$ divide a reduced virtual
OU tangle $T$, and what is the quotient $\sigma_{ij}^{-1}T$, as a
reduced OU tangle. This is done in Figure~\ref{fig:divquo}. In (A)
of that figure we display $\sigma_{ij}^{-1}$ at the bottom and $T$ at
the top. $T$ could be complicated, but it turns out we only care about
what it looks like near the O part of strand $j$\footnote{``Near'' in
a combinatorial sense, meaning ``one or two crossings away from''. Not
in any metric sense, of course.}. So in (A) we also display strand $i$
just to remember that it exists, and the O part of strand $j$, up to its
transition point the $\bowtie$. In that part strand $j$ crosses over
a number of other strands, or over its own U part, or $i$'s U part,
and perhaps with multiplicity. We summarize that by showing only two
strands passing under, with no care for their identity or orientation.

\begin{figure}
\[
  \def\i{$i$} \def\j{$j$} \def\g{$\gamma$} \def\vp{$\vdots p$} \def\s{$\longrightarrow\atop\sigma_{ij}$}
  \input{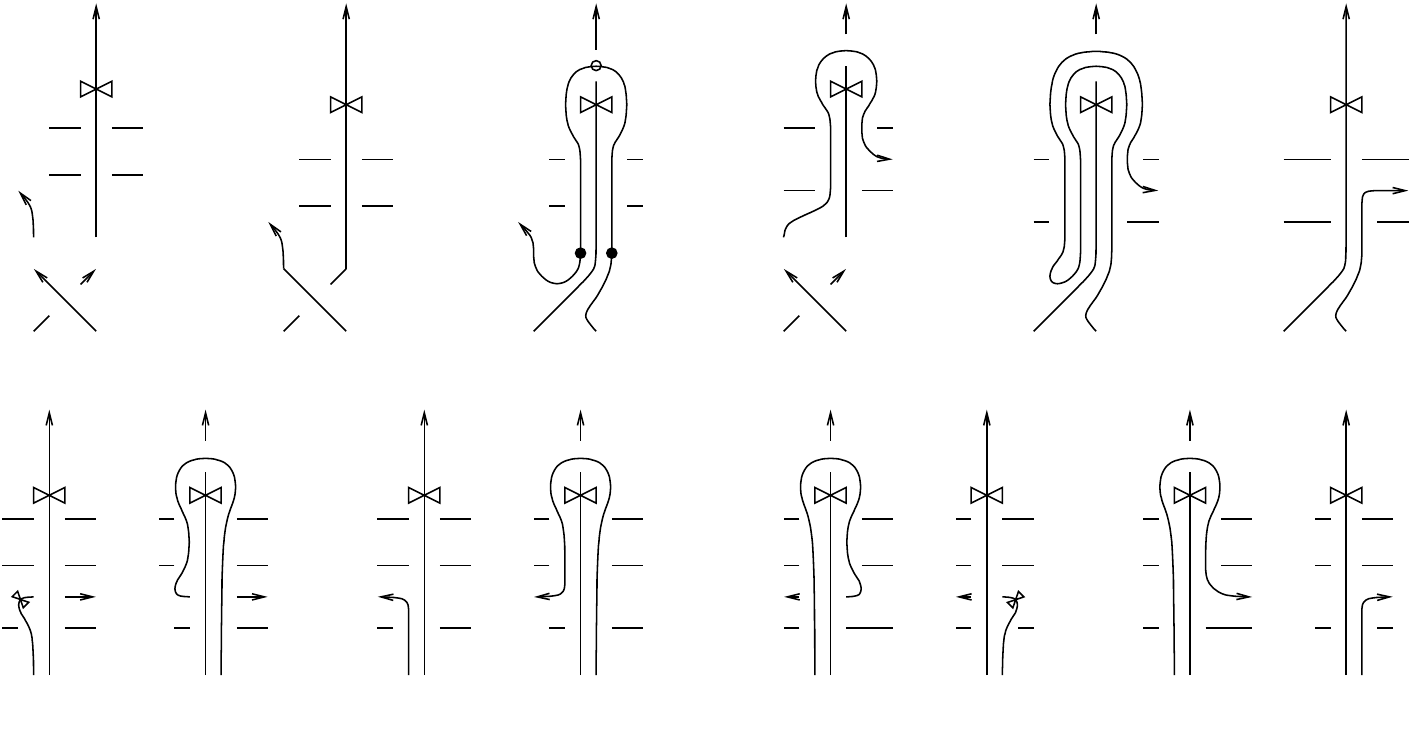_t}
\]
\caption{Everything we need to know about divisibility and quotients.}
\label{fig:divquo}
\end{figure}

In (B) of Figure~\ref{fig:divquo} we attach $\sigma_{ij}^{-1}T$ to
$T$. The result is typically not OU and not reduced. In (C) we glide
the part where $i$ goes over $j$ past the $\bowtie$, to make the result
OU. We indicate the part of strand $i$ that got moved,
from one $\bullet$ to the other, by $\gamma$ and note that $\gamma$ has a natural
mid-point, indicated with a $\circ$. Note that the tangle in (C) might
not be reduced! That would be the case if as in (D), strand $i$ was to
follow $\gamma$ (backwards) at least a part of the way. For had this
been the case, the OU form of $\sigma_{ij}^{-1}T$ would look like in
(E), and would be reducible to (F) by R2 moves.

Ergo we care to know precisely how far backwards along $\gamma$ strand
$i$ follows, and when it deviates, precisely how. Four options for the
behaviour of $i$ are shown in the lower half of Figure~\ref{fig:divquo}:
The option ``before middle inward'', (bmi), means that $i$ traces along
$\gamma$ to before its mid-point, and then deviates by reaching its own
transition point $\bowtie$ and turning inwards, to cross under $j$. In
Figure~\ref{fig:divquo} we show both $T$ and the reduced OU form of
$\sigma_{ij}^{-1}T$ for option (bmi). If in $T$ there are $p$ further
strands passing under $j$ after the deviation point, it is easy to see
that $\sigma_{ij}^{-1}T$ gains $2p+2>0$ crossings over $T$. This is
indicated at the bottom of the (bmi) part of Figure~\ref{fig:divquo}.

The remaining three options for $i$ are shown in Figure~\ref{fig:divquo}
following the same pattern. In option (bmo) strand $i$ deviates from
$\gamma$ before the mid-point and turns outwards, into parts of $T$
we don't display. Here again $\xi(\sigma_{ij}^{-1}T)>\xi(T)$, with a
gain of $2p+1$. In option (ami) strand $i$ follows $\gamma$ past the
mid-point and turns inwards, and in (amo) it turns outwards. In the
last two cases $\sigma_{ij}^{-1}T$ loses crossings relative to $T$,
with the precise losses as indicated.

We leave it to the reader to verify that the four options (bmi), (bmo),
(ami), and (amo) are mutually exclusive and complete, and that in all
cases, if $T$ is reduced to start with, then $\sigma_{ij}^{-1}T$ as shown
in Figure~\ref{fig:divquo} is again reduced\footnotemark.
\footnotetext{In short, if $T$ is reduced and $T'$ is obtained from
it by adding and/or removing a number of crossings, when is $T'$
non-reduced? If an R1 or an R2 got added, or if a crossing got added
which along with an existing crossing creates an R2, or if crossings
are removed between a pair of existing or newly added crossings so as
to remove the separation between them and turn them into an R2 pair,
or if crossings are removed along a kink to create an R1. One must inspect
that none of these possibilities can occur here.}

Finally we note that we could repeat the whole discussion
for $\sigma_{ij}T$, and everything would be the same,
with only a left-right reflection of all the tangles in
Figure~\ref{fig:divquo}. \endpar{\ref{disc:divquo}}
\end{discussion}

Discussion~\ref{disc:divquo} proves the following lemma, which summarizes it:

\begin{lemma}[Division] \label{lem:divquo} Let $g=\sigma_{ij}^{\pm 1}$ be a generator of $\vcalPB_n$ and $T$
be a reduced virtual OU tangle.
\begin{enumerate}[leftmargin=*,labelindent=0pt]
\item $\xi(gT)$ is never equal to $\xi(T)$, so
  always, either $g^{-1}\mid T$ or $g\mid gT$.
\item $\sigma_{ij}|T$ if and only if $i$ is parallel\,\footnotemark\ to $j$
  on its left to its transition point $\bowtie$, and then immediately
  crosses over $j$ in a positive crossing, as in (ami) and (amo) of
  Figure~\ref{fig:divquo}. Similarly for $\sigma_{ij}^{-1}\mid T$, with
  ``left'' replaced with ``right'' and ``positive'' with ``negative''.
\item If indeed $g\mid T$, the quotient
  $g^{-1}T$ is determined by how far $i$ pushes backwards
  on the other side of $j$, past the point where it crosses over $j$,
  and by whether it turns ``in'' or ``out'' after that. \qed
\end{enumerate} \end{lemma}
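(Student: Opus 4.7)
The plan is to organize the case analysis already sketched in Discussion~\ref{disc:divquo} into the three itemized claims, and then fill in the verifications the discussion leaves implicit. Since $T$ is OU to begin with, stacking $\sigma_{ij}^{\pm 1}$ on top of $T$ produces a diagram in which the only UO interval is the newly created $ij$-crossing sitting just above the O-portion of $j$. By Theorem~\ref{thm:acyclic} the gliding procedure clears this disorder uniquely; concretely, one bulk glide move slides the new crossing down along the O-portion of $j$ until it passes $j$'s transition point $\bowtie$. The portion of strand $i$ that gets swept along in the bulk glide is the arc $\gamma$ of panels (B) and (C) of Figure~\ref{fig:divquo}, and it carries a canonical midpoint $\circ$.

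Next I would enumerate the possible behaviours of strand $i$ relative to $\gamma$ inside $T$. Either $i$ runs anti-parallel to $\gamma$ (from its own $\bowtie$ backward into $T$) for some time and then diverges, or it does not run parallel at all. The divergence may happen \emph{before} or \emph{after} the midpoint $\circ$, and upon diverging strand $i$ may turn \emph{inward} (entering its U-part by crossing under $j$) or \emph{outward} (peeling off into the rest of $T$). This gives the four disjoint options (bmi), (bmo), (ami), (amo). Disjointness and exhaustiveness are immediate from the list, and the fact that the ``outward'' cases really do not accidentally coincide with the ``inward'' cases is forced by the reducedness of $T$: any further coincidence would produce an R2 pair already present in $T$.

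In each of the four cases one draws the OU form of $gT$, cancels all resulting R2 pairs with $\gamma$, and counts crossings. The arithmetic is already displayed in Figure~\ref{fig:divquo}: the change $\xi(gT)-\xi(T)$ equals $+2p+2$, $+2p+1$, $-2p-2$, $-2p-1$ respectively, where $p$ is the number of additional strands passing under $j$ past the point of divergence. In particular $\xi(gT) \ne \xi(T)$, which is claim~(1). Since $\sigma_{ij}\mid T$ means precisely that stacking $\sigma_{ij}^{-1}$ on $T$ strictly decreases $\xi$, the realizing cases are exactly (ami) and (amo), matching verbatim the geometric condition in claim~(2); the $\sigma_{ij}^{-1}$-version follows by left-right reflection. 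Claim~(3) reads off Figure~\ref{fig:divquo}: the ``in''/``out'' dichotomy together with the depth $p$ beyond $\circ$ determine the reduced quotient diagram.

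The main obstacle — and the one genuine piece of work beyond bookkeeping — is verifying that the quotient displayed in Figure~\ref{fig:divquo} is actually reduced, i.e.\ that the cancellations with $\gamma$ do not inadvertently create a new R1 kink or a new R2 pair. This is exactly the footnote to Discussion~\ref{disc:divquo}: one must check, for each of the ways a reduction could appear (a brand-new R1 or R2, or a pair of surviving crossings brought together by the removals, or a kink opened by the removals), that it would force a reducible configuration already present in $T$, contradicting the hypothesis that $T$ was reduced. This is a finite local inspection of the pictures in Figure~\ref{fig:divquo} rather than a conceptual difficulty, and once it is discharged the lemma is proved. \qed
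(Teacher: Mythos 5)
Your proposal follows the paper's own argument essentially verbatim: the paper proves the Division Lemma precisely by the analysis of Discussion~\ref{disc:divquo}, i.e.\ gliding the newly attached crossing past $j$'s $\bowtie$, tracking the arc $\gamma$ and its midpoint, splitting into the four mutually exclusive cases (bmi), (bmo), (ami), (amo) with crossing-number changes $+2p+2$, $+2p+1$, $-2p-2$, $-2p-1$, and relegating the reducedness check of the quotient to the same finite local inspection you describe. No substantive difference.
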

\footnotetext{\label{foot:parallel}%
Note that we are in topology /
combinatorics, not in geometry, so ``$i$ is left-parallel to $j$''
means ``anything $j$ does $i$ does in tandem'', and not ``$i$ and $j$
maintain a constant distance between them''. More precisely, ``$i$
is left-parallel to $j$'' means ``any strand that crosses under $j$
in a positive crossing then crosses under $i$ in a positive crossing
(with no other crossings in between), any strand that crosses under $j$
in a negative crossings crossed under $i$ right before in a negative crossing
(with no other crossings in between), and $i$ and $j$ encounter those
pairs of crossings in the same order''.}

For the continuation of the proof of Chterental's Theorem (\ref{thm:vinj}) we
will need the Diamond Lemma. For completeness we provide a full
formulation and a proof.  While it is well-known~\cite{Bergman:Diamond,
Sapir:CombinatorialAlgebra, Smolka:Confluence}, we were not able to find
a simple exposition in a language sufficiently similar to ours.

\begin{definition} \label{def:diamond}
A binary relation $\to$ defined on a set $\calX$
is called Noetherian if there are no infinite sequences $(x_i)\in\calX$
such that $x_1\to x_2\to\ldots$ (in particular, never $x\to x$,
for $x\in\calX$). The ``transitive closure'' of $\to$, denoted
$\toto$, is the binary relation on $\calX$ defined by
\[ (x\toto y)
  \quad\Longleftrightarrow\quad
  (\exists x_0,\ldots,x_n\in\calX\text{ such that }x=x_0\to x_1\to\ldots\to x_n=y).
\]

\Needspace{17mm} 
\parpic[r]{\input{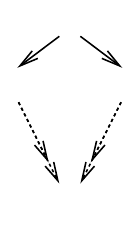_t}}
\noindent It is clear that $\toto$ is transitive and taking $n=0$ we see that it
is reflexive. A non-empty subset $\calY$ of $\calX$ is called connected
if whenever $y\in \calY$ and $x\in\calX$ satisfies $x\to y$ or $y\to x$,
then $x\in \calY$. We say that $\to$ satisfies the diamond condition
if for every $a,b,c\in\calX$ such that $a\to b$ and $a\to c$, there is
some $d\in\calX$ such that $b\toto d$ and $c\toto d$ (``every wedge can be completed to a diamond'', as on the right).
\end{definition}

\begin{lemma} \label{lem:diamond} (The Diamond Lemma,
\cite{Newman:DiamondLemma}) If a Noetherian relation $\to$ on a set
$\calX$ satisfies the diamond condition then every connected subset
$\calY\subset\calX$ has a unique final element. Namely, there is a
unique $f\in \calY$ such that for every $y\in \calY$, $y\toto f$.
\end{lemma}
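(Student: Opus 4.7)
The plan is to prove the lemma via the classical route: first upgrade the local diamond condition to global confluence (Newman's Lemma), and then combine confluence with the Noetherian hypothesis to produce and uniquely identify the final element.

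\emph{Step 1 (existence of terminal forms).} Call $x \in \calX$ \emph{terminal} if no $x'$ satisfies $x \to x'$. Since $\to$ is Noetherian, iterating $\to$ starting from any $x$ must halt, so every $x$ satisfies $x \toto x^{\ast}$ for some terminal $x^{\ast}$.

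\emph{Step 2 (Newman's Lemma: confluence).} I claim that if $a \toto b$ and $a \toto c$, there exists $d \in \calX$ with $b \toto d$ and $c \toto d$. Proceed by Noetherian induction on $a$: the induction hypothesis is that the claim holds for every $a'$ reachable from $a$ in one step. If $a = b$ or $a = c$ the claim is trivial, so write $a \to a_1 \toto b$ and $a \to a_2 \toto c$. The diamond condition applied to $a \to a_1$ and $a \to a_2$ yields $e$ with $a_1 \toto e$ and $a_2 \toto e$. Applying the induction hypothesis at $a_1$ to the pair $(a_1 \toto b,\; a_1 \toto e)$ produces $f$ with $b \toto f$ and $e \toto f$. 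Applying it at $a_2$ to $(a_2 \toto c,\; a_2 \toto e \toto f)$ produces $d$ with $c \toto d$ and $f \toto d$. Then $b \toto f \toto d$ and $c \toto d$ as required.

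\emph{Step 3 (unique terminals and transport along zigzags).} Confluence forces unique terminal reducts: if $a \toto f_1$ and $a \toto f_2$ with both $f_i$ terminal, any common reduct $d$ must equal each $f_i$, so $f_1 = f_2$. Write $\tau(a)$ for this unique terminal form. If $x \to y$, then any terminal reduct of $y$ is a terminal reduct of $x$, so $\tau(x) = \tau(y)$; hence $\tau$ is constant along every edge and therefore on each path-component of the symmetrized graph. The closure property defining a connected subset $\calY$ ensures that $\calY$ is such a union of path-components containing a common $\tau$-value $f \in \calY$, and $f$ is the unique final element of $\calY$.

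The main obstacle is Step 2: local confluence does not in general imply global confluence (there are standard non-Noetherian counterexamples), and the careful double use of the induction hypothesis, once at $a_1$ to bridge $b$ and $e$ and once at $a_2$ to bridge $c$ with the common reduct of $b$ and $e$, is precisely what lets the Noetherian hypothesis promote the one-step diamond to a full $\toto$-diamond.
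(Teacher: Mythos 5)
Your proof is correct, but it is organized differently from the paper's. You follow the classical two-stage route: first prove Newman's Lemma proper (local confluence plus Noetherian implies global confluence) by well-founded induction, with the characteristic double use of the induction hypothesis at $a_1$ and then at $a_2$, and only afterwards derive existence and uniqueness of normal forms. The paper short-circuits this: it never establishes confluence of $\toto$ at all. Instead it works directly with the predicate ``$x$ has a unique terminal reduct,'' calls the set of such $x$ $\calG$, and shows $\calX\setminus\calG$ is empty by picking a minimal (terminal-in-the-complement) counterexample $a$; the diamond condition applied to two one-step successors $b,c$ of $a$ immediately gives $\tau(b)=\tau(d)=\tau(c)$ because $b,c,d$ already lie in $\calG$, so no nested induction is needed. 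Both arguments rest on the same foundation (the Noetherian hypothesis supplying a minimal element, resp.\ licensing well-founded induction), and the diamond condition is invoked at essentially the same spot; what your version buys is the stronger intermediate statement of confluence (of independent interest in rewriting theory), while the paper's version is a little leaner for the stated goal. One shared loose end, inherited from the statement rather than introduced by you: the definition of ``connected'' only says $\calY$ is closed under the symmetrized relation, so strictly it could be a union of several path-components with different $\tau$-values; both your Step 3 and the paper's closing sentence implicitly read ``connected'' as a single path-component, which is clearly the intended meaning.
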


\noindent{\it Proof.} If $t\in\calZ\subset\calX$, we say that $t$ is
$\calZ$-terminal if there is no $z\in\calZ$ with $t\to z$. By
the Noetherian property, every non-empty $\calZ$ has a terminal element
(perhaps many). Set
\[ \calG\coloneqq
  \{x\in \calX\colon\text{there is a {\em unique} $\calX$-terminal $\tau(x)$ such that $x\toto \tau(x)$}\}.
\]
Clearly if $x\in\calG$ and $x\toto y$, then $y\in\calG$ and
$\tau(x)=\tau(y)$ (*). If $\calB\coloneqq\calX\setminus\calG$ is
non-empty, pick some $\calB$-terminal element $a\in\calB$. If $b,c\in\calX$ and $a\to b$
and $a\to c$, find $d$ such that $b\toto d$ and $c\toto d$. As $a$ is
$\calB$-terminal, $b,c,d\in\calG$ so by (*) $\tau(b)=\tau(d)=\tau(c)$.
Hence all the followers $b$ of $a$ have the same $\tau(b)$, and hence
$a\in\calG$ with $\tau(a)=\tau(\text{any follower})$ (if $a$ has no
followers take $\tau(a)=a$). But this contradicts $a\in\calB$, so $\calB$
is empty and $\calG=\calX$.

Now if $x,y\in\calX$ and $x\to y$ then $\tau(x)=\tau(y)$, so by connectivity
$\tau$ is constant on $\calY$. Call that constant $f$. \qed

\begin{definition} Let $\calX_n=\vcalPB_n\times\vcalROU_n$. We define a binary relation $\to$ on $\calX_n$ as follows
\begin{eqnarray*}
  (\beta_1,T_1)\to(\beta_2,T_2)
  & \Longleftrightarrow &
  \text{for some }g=\sigma_{ij}^{\pm 1}:\ g\mid T_1,\ T_2=g^{-1}T_1, \text{ and }\beta_2=\beta_1g \\
  & \Longleftrightarrow &
  \text{for some }g=\sigma_{ij}^{\pm 1}:\ \xi(T_2)<\xi(T_1),\  T_2=g^{-1}T_1, \text{ and }\beta_2=\beta_1g.
\end{eqnarray*}
\end{definition}

\begin{example} With a bit of thought, four examples of elements (A),
(B), (C), and (D) of $\calX_3$, in fact of $\calPB_3\times\calROU_3$, can
be seen in Figure~\ref{fig:stirring}. Precisely, the ``whisk'' part of
each of the figures is the braid part $\beta$, and the ``parsley'' part
becomes an OU tangle if the whisk is replaced by a straight ``identity''
whisk as in image (D). These elements are related in the opposite manner
to the figure: (D)$\to$(C)$\to$(B)$\to$(A).
\end{example}

\begin{discussion} \label{disc:to}
Note that if $(\beta_1,T_1)\to(\beta_2,T_2)$ then $\beta_1T_1=\beta_2T_2$
and $T_2$ is ``simpler'' than $T_1$. Thus ``flowing with $\to$''
agrees with our plan from Discussion~\ref{disc:plan}.

Note also that if $(\beta_1,T_1)\to(\beta_2,T_2)$ then $\beta_2$
and $T_2$ are determined by $\beta_1$ and $T_1$ and a single
generator $g$ of $\vcalPB_n$, which we can mark atop the $\to$ symbol as
$(\beta_1,T_1)\overset{g}{\longrightarrow}(\beta_2,T_2)$.  With this in
mind, a $\toto$-relation in $\calX_n$, meaning a $\to$-chain, is determined by a pair
\[ \left(
  \beta_0,
  \xymatrix{T_0 \ar[r]^{g_0} & T_1 \ar[r]^{g_1} & T_2 \ar[r]^{g_2} & \cdots \ar[r]^{g_{m-1}} & T_m}
\right) \]
where $\beta_0$ is a virtual braid, and where each $g_k$ is a generator of $\vcalPB_n$ and for every $k$,
$g_k\mid T_k$ and $T_{k+1}=g_k^{-1}T_k$. The $\to$-chain corresponding to such a pair is
\[ \xymatrix{
  (\beta_0,T_0) \ar[r]^<>(0.5){g_0} &
  (\beta_0g_0,T_1) \ar[r]^<>(0.5){g_1} &
  (\beta_0g_0g_1,T_2) \ar[r]^<>(0.5){g_2} &
  \cdots \ar[r]^<>(0.5){g_{m-1}}
  & (\beta_0\prod g_k,T_m)
} \]
An example with $\beta_0$ suppressed is in Example~\ref{exa:CinnamonRolls}. It can be completed by choosing
$\beta_0$ arbitrarily.

Finally, note that a diamond in $\calX_n$ is determined a single virtual
braid $\beta_0$ and two chains as above with a shared initial tangle,
\begin{equation} \label{eq:DiamondInX}
  \xymatrix@R=0mm@C=14mm{
    & T_1 \ar[r]^{g_1} & T_2 \ar[r]^{g_2} & \cdots \ar[r]^{g_{m-2}} & T_{m-1} \ar[rd]^<>(0.5){g_{m-1}} & \\
    T_0=T'_0 \ar[ur]^<>(0.5){g_0} \ar[dr]^<>(0.5){g'_0} & & & & & T_m=T'_{m'}, \\
    & T'_1 \ar[r]^{g'_1} & T'_2 \ar[r]^{g'_2} & \cdots \ar[r]^{g'_{m'-2}} & T'_{m'-1} \ar[ur]^<>(0.5){g'_{m'-1}} &
  }
\end{equation}
with the additional requirement that $\prod g_k=\prod g'_k$ in $\vcalPB_n$
(which also implies that the chains share their end tangles). An example
of such a diamond, with the initial $\beta_0$ suppressed, is in
Figure~\ref{fig:SCR}. \endpar{\ref{disc:to}}
\end{discussion}

\begin{lemma} The relation $\to$ satisfies the conditions of the Diamond Lemma.
\end{lemma}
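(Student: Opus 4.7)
The plan is to verify the two hypotheses of the Diamond Lemma (\ref{lem:diamond}) for the relation $\to$ separately.

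For the Noetherian property, observe that by construction each step $(\beta_1,T_1)\to(\beta_2,T_2)$ strictly decreases $\xi$, since the definition requires $\xi(T_2)<\xi(T_1)$. As $\xi$ takes non-negative integer values, any $\to$-chain starting at $(\beta_0,T_0)$ has length at most $\xi(T_0)$, ruling out infinite descending sequences.

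For the diamond condition, consider a wedge with both successors distinct, $(\beta_0,T_0)\xrightarrow{g}(\beta_0 g, g^{-1}T_0)$ and $(\beta_0,T_0)\xrightarrow{g'}(\beta_0 g', g'^{-1}T_0)$, with generators $g=\sigma_{ij}^{\pm 1}$ and $g'=\sigma_{kl}^{\pm 1}$. Since the braid components accumulate by right-multiplication, it is enough to produce $\to$-chains in $\vcalROU_n$ from $g^{-1}T_0$ and $g'^{-1}T_0$ meeting at a common $T_m$, such that the generator-products read along the two arms are equal in $\vcalPB_n$. I would proceed by case analysis on the overlap between $\{i,j\}$ and $\{k,l\}$, invoking the Division Lemma (\ref{lem:divquo}), which pins down both the local configuration of $T_0$ near the transition points involved and the exact form of each quotient.

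If $\{i,j\}\cap\{k,l\}=\emptyset$, the two local pictures around the relevant $\bowtie$-points are disjoint. Extracting $g$ cannot disturb the parallel-then-crossing configuration that certifies $g'\mid T_0$, and vice versa, so each diamond closes in one step on each side; the required braid identity $gg'=g'g$ is the distant-strands commutativity relation in $\vcalPB_n$. When $\{i,j\}$ and $\{k,l\}$ share a strand, the Division Lemma forces very restricted joint configurations at a single transition point. The archetypal behaviours are already exhibited in the paper: Example~\ref{exa:GarsideHexagon} closes via the braid relation $\sigma_{ij}\sigma_{ik}\sigma_{jk}=\sigma_{jk}\sigma_{ik}\sigma_{ij}$, while the slashed cinnamon roll Example~\ref{exa:SCR} shows a diamond whose arms are multi-step chains threading through the cinnamon roll tangles of Example~\ref{exa:CinnamonRolls}, closing via the identity~\eqref{eq:SCRBraids1}.

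The main obstacle is the combinatorial enumeration in the shared-strand case: one must list every way two distinct generators can simultaneously meet the parallelism-and-crossing criterion of the Division Lemma at the transition points of a single reduced OU tangle, and in each case produce a closing pair of $\to$-chains whose generator products agree in $\vcalPB_n$. The Division Lemma sharply restricts the options, so only finitely many configurations arise, and each is resolved by local inspection of the diagram together with one of the virtual braid relations, namely distant-strand commutativity, the triangle relation, or the cinnamon-roll identity~\eqref{eq:SCRBraids1}.
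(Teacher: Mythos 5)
Your overall strategy matches the paper's: Noetherianity from the strict decrease of $\xi$, and the diamond condition by cases on how the two generators' strand sets overlap, with the disjoint case closed by the commutation relation and the shared-strand cases constrained by the Division Lemma (\ref{lem:divquo}). The Noetherian half and the disjoint case are fine as you state them.

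The gap is in your treatment of the shared-strand case, and specifically in the claim that ``only finitely many configurations arise, and each is resolved by local inspection.'' That is not true as stated, and it hides exactly where the work lies. First, you never enumerate which pairs of generators \emph{can} simultaneously divide a reduced OU tangle; the Division Lemma rules out all two-strand pairs and most three-strand pairs (e.g.\ $\sigma_{ij}$ and $\sigma_{ik}$ cannot both divide $T$, since strand $i$ would have to be left-parallel to the O-parts of both $j$ and $k$, which collapses to a contradiction), and each exclusion needs its own short argument. Second, and more seriously, the surviving hard case --- $\sigma_{ji}^{-1}\mid T$ and $\sigma_{jk}\mid T$, where $j$ is simultaneously a right-parallel of $i$ and a left-parallel of $k$ --- produces configurations parametrized by unbounded integers (the widths $w_1,w_2$ of the intervening bands and the push-back numbers $p_i,p_k$), so the configurations form infinitely many shapes falling into finitely many \emph{families}, and the completing diamonds have arms of unbounded length: the twist sequence $\sigma_{ik}\sigma_{ki}\sigma_{ik}\cdots$ has length $p_k+1$. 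Equation~\eqref{eq:SCRBraids1} is only one instance of an infinite family of required braid identities, one for each roll length and each of the variant rollings in Figure~\ref{fig:Cases23}. Closing these diamonds therefore requires a uniform argument over all parameter values (the paper's ``what ifs'': larger $w_1$, $|p_i-p_k|>1$, nonzero $w_2$ with and without divisibility, the (ami) versus (amo) endings, and their combinations), not a finite local inspection. Your proposal correctly points at the slashed cinnamon roll as the archetype but does not supply, or even acknowledge the need for, this uniform analysis, which is the substance of the proof.
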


\noindent{\it Proof.} As crossing numbers are always finite and $\to$
decreases the crossing number of the tangle part, $\to$ is Noetherian. To
verify the diamond condition we must start with a reduced OU tangle $T_0=T$
and two generators $g_0$ and $g'_0$ that divide it, and ``complete
a diamond'' as in Equation~\eqref{eq:DiamondInX}. Let us start with the
hardest case.

\Needspace{25mm} 
\parpic[r]{\def\i{$i$} \def\j{$j$} \def\k{$k$}
  \input{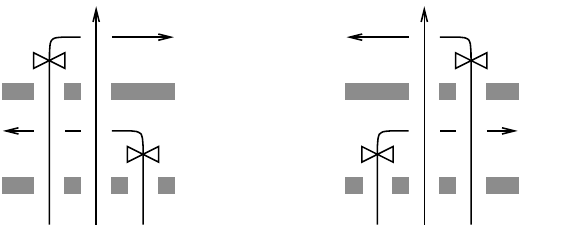_t}
}
\noindent{\it Case 1. For some $i,j,k$, $\sigma_{ji}^{-1}\mid T$ and
$\sigma_{jk}\mid T$.} By (2) of the Division Lemma (\ref{lem:divquo}),
strand $j$ must be left-parallel to strand $k$ to $k$'s $\bowtie$ and
right-parallel to strand $i$ to $i$'s $\bowtie$. So the tangle $T$ must
contain a part as on the right, with the two grey bands representing any
number $w_1$ and $w_2$ of further strands. (Two options for $T$ are shown
and we will treat only the first, as the second is mirror image thereof).

In order to complete a diamond, we need to know the quotients
$\sigma_{ji}T$ and $\sigma_{jk}^{-1}T$. By (3) of the Division Lemma
(\ref{lem:divquo}), this gets complicated if $j$ continues as a right
parallel of $k$ and as a left parallel of $i$.

\Needspace{18mm} 
\parpic[r]{\def\i{$i$} \def\j{$j$} \def\k{$k$}
  \input{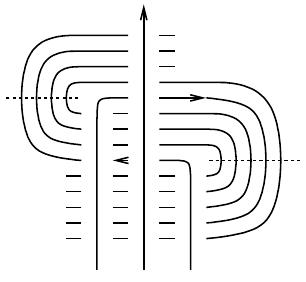_t}
}
Sayeth you: ``That's impossible! One can't be to the left of $i$ and also to
the right of $k$!''. But unfortunately, it's the topological / combinatorial
``left'' and ``right'' that concern us here and the impossible is
actually possible. As indicated in Footnote~\ref{foot:parallel}, ``$j$
is right-parallel to $k$'' (to a point) just means that some number
$p_k\geq 0$ of strands that cross under $k$ then proceed to cross under
$j$, in the same order. As in the figure on the right, this can be drawn
while keeping $j$ straight and making the $p_k$ under-strands curve into
semi-circles. Similarly, ``$j$ is left-parallel to $i$'' (to a point)
means, as in the figure, that some number $p_i\geq 0$ of arcs cross
under both $i$ and $j$ in the manner shown.

Unfortunately, there are many cases to check, depending on the relative
sizes of the widths $w_1$ and $w_2$, of the ``push-back numbers''
$p_i$ and $p_k$, and on whether, at the end, $j$ crosses under $i$,
or under $k$, or goes elsewhere, as in options (ami) and (amo) of
Figure~\ref{fig:divquo}. We will try to make it as painless as possible.

The ``base cases'' occur when
\begin{enumerate}
\item $w_2=0$, 
\item $|p_i-p_k|\leq 1$, 
\item  $w_1$ is as small as it can be given $p_i$, $p_k$, and $w_2$
  (meaning, $w_1=p_k$ assuming the first two conditions hold).
\item strand $j$ continues outward relative to both $i$ and $k$,
  as in option (amo) of Figure~\ref{fig:divquo}.
\end{enumerate}
There are then three possibilities: If $p_k=p_i+1$, we are looking
at a slashed cinnamon roll as in Figure~\ref{fig:SCR}, that figure
also shows how to complete the diamond, and the required braid
relation is Equation~\eqref{eq:SCRBraids1}.  The cases $p_k=p_i$
and $p_k=p_i-1$ correspond to slashed cinnamon rolls rolled slightly
differently and are shown as (A) and (B) of Figure~\ref{fig:Cases23},
along with the corresponding diamonds and braids relations. Note
that in all of these cases the length of the ``twist sequence''
$\sigma_{ik}\sigma_{ki}\sigma_{ik}\cdots$ is $p_k+1$, so these diamonds
can be be arbitrarily long.

\begin{figure}
\[ \def\i{$i$} \def\j{$j$} \def\k{$k$}
  \def\s#1{$\sigma_{#1}$} \def\sb#1{$\sigma_{#1}^{-1}$}
  \input{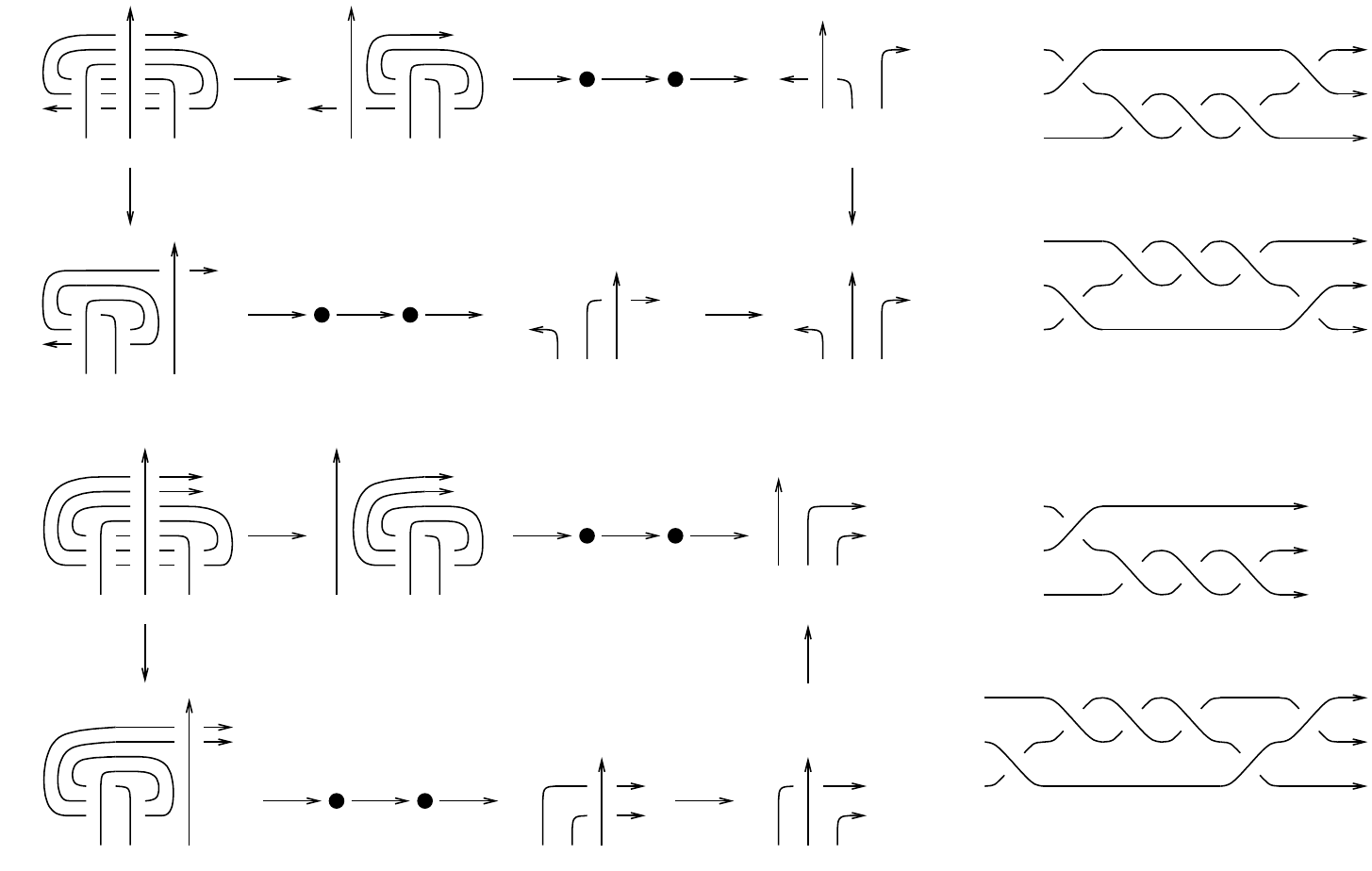_t}
\]
\caption{Two other slashed cinnamon rolls.}
\label{fig:Cases23}
\end{figure}

What if $w_1$ is bigger than the least it can be given the other
parameters (which are otherwise unchanged)? That adds a band of strands at the bottom, as in (A) of
Figure~\ref{fig:WhatIfs}. This band gets added in the same way everywhere
else in Figures~\ref{fig:SCR} and~\ref{fig:Cases23}, with no change to
the resulting diamonds.

\begin{figure}
\[ \def\i{$i$} \def\j{$j$} \def\k{$k$}
  \input{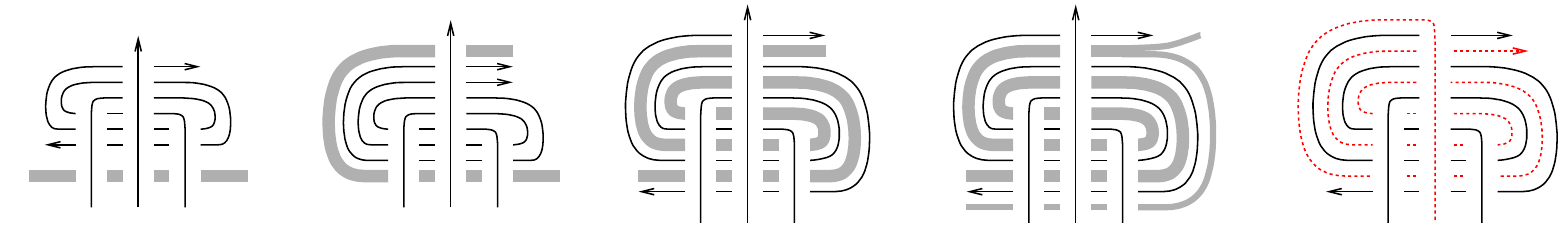_t}
\]
\caption{The what ifs.}
\label{fig:WhatIfs}
\end{figure}

What if $p_i>p_k+1$ (yet respecting the other constraints)? This adds and extra band of strands as in (B)
of Figure~\ref{fig:WhatIfs}. These bands get tugged along through
the processes of Figure~\ref{fig:Cases23} with no changes to the end
results. A similar thing happens if $p_k>p_i+1$.

What if $w_2>0$ and $p_1=p_k$ are multiples of $w_2+1$? Then we are
in (C) of Figure~\ref{fig:WhatIfs}, and the slashed cinnamon roll has
a band of width $w_2$ of extra filling! One may check that the extra
filling unwinds along with the rest as in Figure~\ref{fig:Cases23}
with no change to the diamonds. There are similar ``filled'' versions
of the other base cases.

What if $w_2>0$ and $p_1=p_k$ are not round multiples of $w_2+1$? Then
we are in a situation like (D), which is a combination of previous cases,
and the same conclusions apply.

What if we are in an (ami) case instead of (amo)? We are in a situation
like in (E), and the same comments apply as for (C). We made $j$ dotted in
(E), to make the similarity with (C) easier to see.

What if several of the what ifs are combined? Then some combination of
(A)-(E) of Figure~\ref{fig:WhatIfs} applies, and we leave it to the reader
to verify that in all cases, diamonds complete as in Figures~\ref{fig:SCR}
and~\ref{fig:Cases23}.

\noindent{\it Case 2. For some $i,j,k$, $\sigma_{ij}\mid T$
and $\sigma_{jk}\mid T$.} By (2) of the Division Lemma (\ref{lem:divquo}),
strand $i$ must be a left-parallel of $j$ and then cross over it,
and $j$ must be a left-parallel of $k$ and then cross over it, as
shown in Figure~\ref{fig:CaseM}, along with the completion of the
wedge into a diamond. In that figure we took option (amo) for both
divisibilities. Option (ami) is possible only for the $\sigma_{ij}\mid T$
divisibility, and makes little difference to the resulting diamond.

\begin{figure}
\[ \def\i{$i$} \def\j{$j$} \def\k{$k$} \def\s#1{$\sigma_{#1}$}
  \input{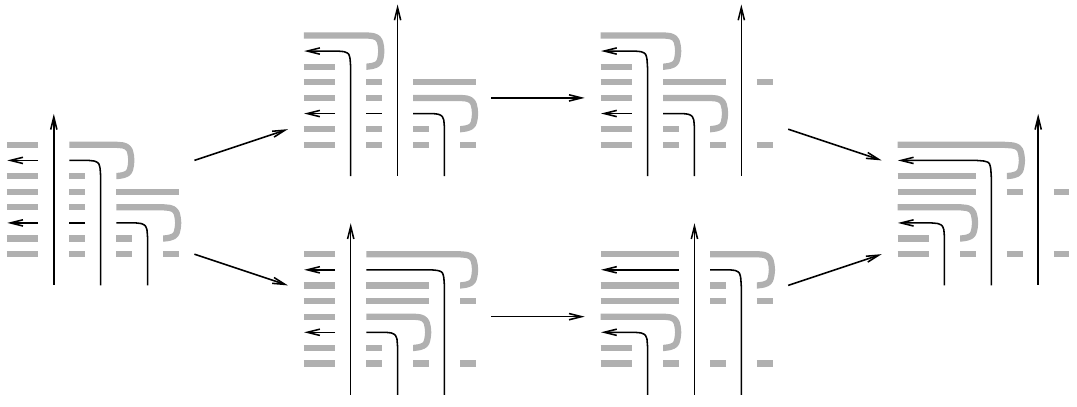_t}
\]
\caption{Case 2 and the resulting diamond.} \label{fig:CaseM}
\end{figure}

\noindent{\it Case 3. For some $i,j,k$, $\sigma_{ij}^{-1}\mid T$
and $\sigma_{jk}^{-1}\mid T$.} That's the same as Case~2, with left interchanged with right.

\noindent{\it Case 4. For some distinct $i,j,k,l$, $\sigma_{ij}^{s_1}\mid
T$ and $\sigma_{kl}^{s_2}\mid T$, where $s_1,s_2\in\{\pm 1\}$.} In this
case division by $\sigma_{ij}^{s_1}$ commutes with division by $\sigma_{kl}^{s_2}$,
and the resulting diamond is a square, as in Figure~\ref{fig:Square}.

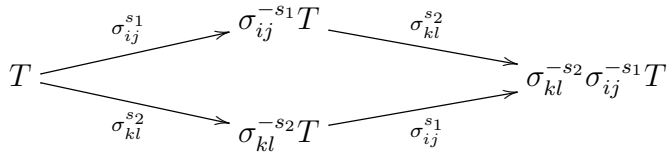
\begin{figure}
\[ \xymatrix@R=0mm@C=25mm{
  & \sigma_{ij}^{-s_1}T \ar[rd]^<>(0.5){\sigma_{kl}^{s_2}} & \\
  T \ar[ru]^<>(0.5){\sigma_{ij}^{s_1}} \ar[rd]_<>(0.5){\sigma_{kl}^{s_2}} & & \sigma_{kl}^{-s_2}\sigma_{ij}^{-s_1}T \\
  & \sigma_{kl}^{-s_2}T \ar[ru]_<>(0.5){\sigma_{ij}^{s_1}} &
} \]
\caption{The diamond for case 4.} \label{fig:Square}
\end{figure}

\noindent{\it There are no further cases to check.} If two generators
divide $T$, they involve at most 4 strands, and if they involve
exactly 4 strands, that's Case~4. The Division Lemma (\ref{lem:divquo}) excludes the
possibility that the two generators involve only two strands --- namely,
that they are two of $\{\sigma_{ij}^{\pm 1},\sigma_{ji}^{\pm 1}\}$. It
also excludes the remaining 3-strand cases: namely, that they are $\{\sigma_{ij},\sigma_{ik}\}$,
$\{\sigma_{ik},\sigma_{jk}\}$, $\{\sigma_{ij}^{-1},\sigma_{ik}^{-1}\}$, 
$\{\sigma_{ik}^{-1},\sigma_{jk}^{-1}\}$, $\{\sigma_{ij},\sigma_{jk}^{-1}\}$, $\{\sigma_{ij}^{-1},\sigma_{jk}\}$,
or $\{\sigma_{ij},\sigma_{kj}^{-1}\}$. Each of these exclusions requires a short argument, and we provide only the
argument for the first one. Indeed if both $\sigma_{ij}\mid T$ and $\sigma_{ik}\mid T$, then by
the Division Lemma strand $i$ is a
left parallel of both the O part of $j$ (call it $O_j$) and the O part
of $k$ (call it $O_k$), showing that at least one of $O_j$ and $O_k$ is
empty. Without loss of generality, it is $O_j$. But then the $i$ over $j$
crossing that the Division Lemma guarantees is the first crossing on
both $i$ and $j$, leaving no room for $O_k$ to be a right parallel of
a part of $i$, unless $O_k$ is also empty. But then the first crossing
on $i$ is both over $j$ and over $k$, which is impossible. \qed

\vskip 2mm
\noindent{\it Proof of Chterental's Theorem (\ref{thm:vinj}).} We have shown
that $\calX_n=\vcalPB_n\times\vcalROU_n$ with the relation $\to$
satisfies the conditions of the Diamond Lemma. Let
$f\colon\calX_n\to\calX_n$ be the function guaranteed by the Diamond
Lemma, mapping every element to the unique final element in its connected
component.

Let $I$ denote both the the 0-crossing pure virtual braid on $n$
strands (the identity element of $\vcalPB_n$) and the 0-crossing
OU tangle on $n$ strands. By (1) of the Division Lemma (\ref{lem:divquo}), if
$\beta',\beta''\in\vcalPB_n$ are virtual braids and $g=\sigma_{ij}^{\pm
1}$ is a generator of $\vcalPB_n$ then
\begin{align*}
  \text{either}\qquad &
  (\beta'g,\Ch(\beta'')) \to (\beta',g\Ch(\beta'')) = (\beta',\Ch(g\beta'')) &
  \qquad\text{if }g^{-1}\mid\Ch(\beta'') \\
  \text{or}\qquad &
  (\beta',\Ch(g\beta'')) = (\beta',g\Ch(\beta'')) \to (\beta'g,\Ch(\beta'')) &
  \qquad\text{if }g\mid g\Ch(\beta''),
\end{align*}
and so by induction on the length of a presentation of
$\beta\in\vcalPB_n$, $(\beta,I)$ and $(I,\Ch(\beta))$ are in the same
connected component of $\calX_n$. Hence $f(I,\Ch(\beta)) = f(\beta,I)
= (\beta,I)$, by the Diamond Lemma and as $\xi(I)=0$ implies that
$(\beta,I)$ is final.

Now if $\Ch(\beta_1)=\Ch(\beta_2)$ then
\[ (\beta_1,I) = f(\beta_1,I) = f(I,\Ch(\beta_1)) = f(I,\Ch(\beta_2)) = f(\beta_2,I) = (\beta_2,I), \]
so $\beta_1=\beta_2$, proving the injectivity of $\Ch$.

Note also that we learned that for every $\beta\in\vcalPB_n$,
$(I,\Ch(\beta))\toto(\beta,I)$, and in particular, $\Ch(\beta)$
must be divisible by at least one generator of $\vcalPB_n$. But
Example~\ref{exa:indivisible} exhibits a virtual OU tangle $T_2$ that
is not divisible by any generator, and hence $\Ch$ is not surjective. \qed

\draftcut \Needspace{16mm} 
\section{Assorted Comments} \label{sec:Assorted}

\begin{discussion} \label{disc:Chterental} Virtual OU tangles are
equivalent to Chterental's ``Virtual Curve Diagrams'' (VCDs)
\cite{Chterental:VBandVCD, Chterental:Thesis}, though we hope that they
are a bit more natural, and that they tell a bigger story. We explain
the relationship in Figure~\ref{fig:VCD}, albeit without repeating
Chterental's definitions. Given a virtual curve diagram as in (A) of
Figure~\ref{fig:VCD}, connect all the curve ends on the upper (dashed)
line to the vertical infinity using O curves (thus making everything
else into U curves), delete the upper and the lower lines, and get
a virtual OU tangle (B). It is positioned opposite to our habits\footnote{We
are in topology / combinatorics; these habits are anyway meaningless.}
so in order to feel a bit better, we flip the picture over in~(C).

\begin{figure}
\[ \input{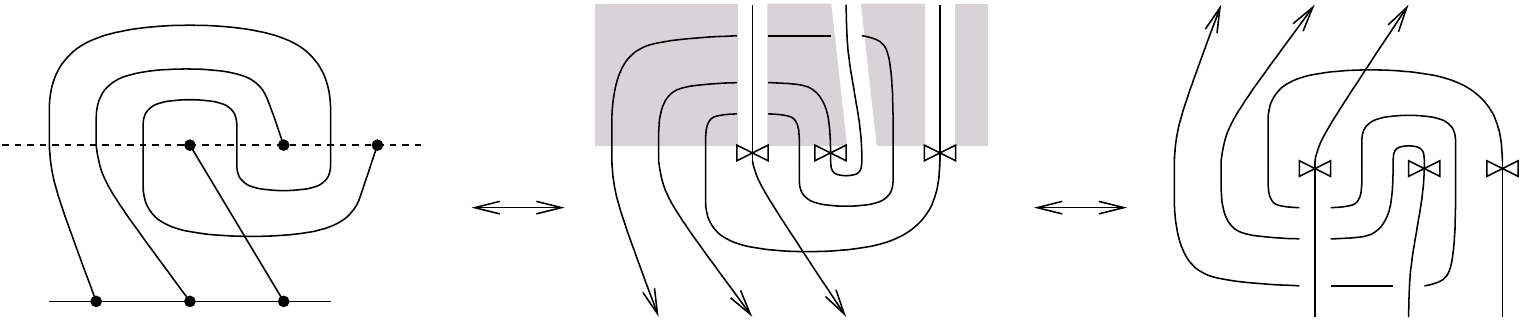_t} \]
\caption{A Chterental Virtual Curve Diagram (VCD) and the corresponding virtual OU tangle.} \label{fig:VCD}
\end{figure}

To go back, draw a virtual OU tangle with the O parts of its strands straight,
parallel, of equal length, and heading downward (that's always possible
as they never cross each other), and then draw the U parts curving between them,
perhaps with virtual crossings\footnote{We've emphasized that ``virtual
crossings'' are {\bf not crossings}. But here we must link with other
people's conventions.}. Push all the virtual crossings to below the areas
between the O strand-parts (in light grey in (B) of Figure~\ref{fig:VCD}),
re-insert an upper line and a lower line, and get back to (A) of
Figure~\ref{fig:VCD}, a VCD.

While in a different context, our proof of Chterental's Theorem (\ref{thm:vinj}) is similar in spirit to Chterental's proof that
VCDs can be used to separate virtual braids.  A notable difference
is that we fully analyze the possible diamonds, instead of relying on
the classical Artin's theorem. Another minor difference is that
we deal only with pure virtual braids (minor because separating braids
that induce different permutations is a non-issue). \endpar{\ref{disc:Chterental}}

\end{discussion}

\begin{remark} \label{rem:classical}
The Division Lemma (\ref{lem:divquo}) implies that if $T$ is classical
(namely, is given with a planar presentation in a disk $D$) and
$\sigma_{ij}^s\mid T$ with $s\in\{\pm 1\}$, then the beginning points
of strands $i$ and $j$ must be adjacent within the boundary of $D$
(with $i$ left of $j$ if $s=+1$ and $i$ right of $j$ if $s=-1$), and then
$\sigma_{ij}^{-s}T$ is classical again. By induction, if a virtual braid
$\beta$ divides a classical $T$, then $\beta$ is actually classical.
\end{remark}

\Needspace{28mm} 
\parpic[r]{\input{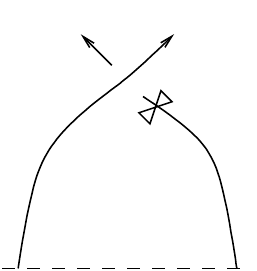_t}}
\begin{remark} \label{rem:tent} Everything within the proof of
Chterental's Theorem (\ref{thm:vinj}) can be restricted to the classical case, hence
reproving (in a complicated and very algebraic manner) that the map
$\barGamma\circ\bariota$ of the Classical Isomorphism Theorem (\ref{thm:iso}) is injective. To show
by algebraic means that $\barGamma\circ\bariota$ is also surjective
it is enough to show that every non-trivial classical OU tangle $T$ is
divisible by at least one $\sigma_{ij}^{\pm 1}$ --- dividing and repeating
until the process terminates (it must, as crossing numbers decrease),
and using the previous remark would show that $T$ is equivalent to a
classical braid. The required ``existence of a divisor'' property is proven
as follows: For any strand $j$ let $i$ be the first strand to cross over
$j$ after $j$'s transition point $\bowtie$. If the starting points of
$i$ and $j$ are adjacent then either $\sigma_{ij}$ or $\sigma_{ij}^{-1}$
divides $T$.  Otherwise a ``triangular tent shield'' is created as on
the right, and the same argument can be repeated within it. When the
process terminates, we have a divisor. The topological arguments of
the classical braids Section (\ref{sec:classical}) are of course a lot simpler.
\end{remark}

\begin{remark} \label{rem:TwoSquares} Let $\calT_n$ denote the set of all classical tangles
with $n$ open strands and let $\vcalT_n$ denote the set of all virtual tangles
with $n$ open strands. We wish to briefly study the following two commutative
squares, the ``classical'' and the ``virtual'':
\[
  \xymatrix{
    \calB_n \ar[r]^\chi_{\text{1-1}} \ar[d]_\bariota^\cong & \calT_n \\
    \calAC_n \ar[r]^<>(0.5)\barGamma_<>(0.5)\cong & \calROU_n \ar[u]_\varphi^{\text{1-1}}
  }
  \qquad\qquad
  \xymatrix{
    \vcalPB_n \ar[r]^{\chi_v}_{\text{1-1?}} \ar[d]_{\bariota_v}^{\text{1-1}} & \vcalT_n \\
    \vcalAC_n \ar[r]^<>(0.5){\barGamma_v}_<>(0.5)\cong & \vcalROU_n \ar[u]_{\varphi_v}^{\text{1-1?}}
  }
\]
In these squares, $\bariota$, $\barGamma$, $\bariota_v$, and
$\barGamma_v$ along with the properties ($\cong$, $\cong$, 1-1,
and $\cong$) were discussed in Sections~\ref{sec:classical}
and~\ref{sec:virtual}. Also, $\chi$ ($\chi_v$) and $\varphi$
($\varphi_v$) are the obvious maps of (virtual) braids and reduced
(virtual) OU tangles into (virtual) tangles\footnote{In the vaguest
way, $\chi$ and $\varphi$ are pictograms for braids and OU tangles,
respectively.}. We note that the injectivity of $\chi$ was known already
to Artin~\cite[Theorem~12]{Artin:TheoryOfBraids}\footnote{Quick proof:
The fundamental group of the complement of a braid along with the $n$
bottom meridians and the $n$ top meridians determines the braid, and
this invariant extends to tangles.}, and thus it follows that $\varphi$
is also injective. We do not know if $\chi_v$ and $\varphi_v$ are injective.
The injectivity of $\chi_v$ was stated as an open problem in
\cite[Question~5.1]{AudoxBellingeriMeilhanWagner:UVWHomotopy}. Given the injectivity of $\bariota_v$, the
injectivity of $\chi_v$ would clearly follow from the
injectivity of $\varphi_v$, which we conjecture holds true. \endpar{\ref{rem:TwoSquares}}
\end{remark}

\begin{conjecture} The obvious map $\varphi_v$ of reduced virtual OU tangles into virtual tangles is injective.
\end{conjecture}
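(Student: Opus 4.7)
The plan is to establish injectivity by producing a combinatorial invariant of virtual tangles that is sensitive enough to distinguish reduced virtual OU tangles. The most natural candidate is the signed Gauss diagram $G(T)$ of a virtual tangle diagram, namely the abstract data of one arc per strand together with one signed, over/under--labelled chord per classical crossing (virtual crossings are ignored). Since virtual Reidemeister moves and the mixed move leave $G$ unchanged, two virtual tangles are equivalent if and only if their Gauss diagrams are connected by a sequence of classical R1, R2, and R3 moves at the Gauss-diagrammatic level.

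On a reduced virtual OU tangle $T$, the Gauss diagram $G(T)$ carries additional rigidity: each strand is split by its transition point $\bowtie$ into an O-arc that contains precisely the over-endpoints of chords incident to that strand, and a U-arc that contains precisely the under-endpoints. Call such Gauss diagrams \emph{OU Gauss diagrams}. The first (and easier) step of the plan is the purely combinatorial claim that two reduced OU tangles with isomorphic OU Gauss diagrams are equal in $\vcalROU_n$. I would prove this by reconstructing each strand from its cyclically ordered sequence of over- and under-crossings, together with the chord pairing; because strands' O-parts never cross each other and their U-parts never cross each other, the only ambiguity in recovering $T$ from $G(T)$ is the relative ordering of consecutive chords of opposite over/under type, and this ambiguity is exactly what is absorbed by the R2 moves already quotiented in the definition of $\vcalROU_n$.

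The hard part, and the main obstacle, is the invariance step: to show that if $T_1, T_2 \in \vcalROU_n$ satisfy $\varphi_v(T_1) = \varphi_v(T_2)$, then $G(T_1)$ and $G(T_2)$ coincide as OU Gauss diagrams up to the R1 and R2 moves permitted within $\vcalROU_n$. Equivalently, one must show that an R3 move applied to an OU diagram (which breaks the OU property) followed by the gliding reduction $\Gamma_v$ back to OU form changes the Gauss diagram only by R1 and R2 moves. The approach would be an exhaustive diamond-lemma--style case analysis, in the spirit of the proof of Chterental's Theorem (\ref{thm:vinj}): enumerate all ways an R3 move can be applied to an OU diagram, apply the full gliding procedure $\Gamma_v$ to the result (using the Division Lemma (\ref{lem:divquo}) to track what happens locally), and check in each case that the resulting Gauss diagram differs from the original only by R1/R2 moves. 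This mirrors the role of the ``base cases'' and ``what ifs'' in Figures~\ref{fig:Cases23} and~\ref{fig:WhatIfs}, but is strictly more involved because one must track the entire Gauss diagram rather than local divisibility.

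An alternative route, which sidesteps this case analysis, is to find an algebraic invariant of virtual tangles that is already known to be complete on a class containing $\vcalROU_n$ --- for instance a suitable virtual quandle, an arrow-polynomial enhancement, or a Drinfel'd-double style universal invariant as hinted at in Section~\ref{sec:more}. In that direction the hard step is completeness rather than invariance, and I expect a careful analysis of how the OU structure forces such an invariant to distinguish reduced representatives would ultimately reduce to a combinatorial statement very close to the one above.
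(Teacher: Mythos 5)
The statement you are addressing is stated in the paper as a \emph{conjecture}: the authors offer no proof, only the remark that a ``plausible path'' would be to find enough virtual-tangle invariants (e.g.\ from Hopf algebras or quantum groups) to separate reduced virtual OU tangles. Your proposal is likewise not a proof --- you correctly identify an ``invariance step'' as the main obstacle and leave it open --- so at best the two of you agree that the statement is unproven. Your first reduction is essentially vacuous in the virtual setting: a virtual tangle diagram modulo detour moves \emph{is} its Gauss diagram, so ``reconstructing $T$ from $G(T)$'' carries no content, and the entire difficulty is concentrated in the step you defer.

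More importantly, the reduction you propose for that step would not suffice even if carried out. You plan to enumerate single R3 moves applied to an OU diagram, apply $\Gamma_v$ to the result, and check that the Gauss diagram changes only by R1/R2. But this only controls equivalences of $\varphi_v(T_1)$ and $\varphi_v(T_2)$ realized by chains of Reidemeister moves all of whose intermediate diagrams are acyclic --- and that case is already settled by (the virtual analogue of) Corollary~\ref{cor:GammaIso}, which says precisely that $\Gamma_v$ descends to acyclic diagrams modulo acyclicity-preserving Reidemeister moves. The genuine difficulty of the conjecture is that a general equivalence in $\vcalT_n$ may pass through non-acyclic diagrams (already a single R1 or R2 move can create an ``Escher waterfall'', as in Discussion~\ref{disc:froofs1}), and on such diagrams $\Gamma_v$ is undefined, so there is nothing to glide back and compare. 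Your alternative route --- exhibiting an algebraic invariant of virtual tangles complete on $\vcalROU_n$ --- is exactly the authors' own stated hope, but neither you nor they produce such an invariant, so the conjecture remains open.
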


The reason we believe this conjecture is that we see a plausible path to
proving it. One way to go would be to find enough invariants of virtual
tangles to separate reduced virtual OU tangles. There are plenty of
invariants of virtual tangles coming from Hopf algebras and quantum
groups, reduced virtual OU tangles are easy to enumerate (they are
``free'' objects, subject to no relations), and there are precedents
where using quantum groups one can find enough invariants to separate
near-free objects: for example, quantum $gl(N)$ invariants separate
braids~\cite{Bar-Natan:glN}, and braid groups are semi-direct products
of free groups.

\begin{discussion} \label{disc:OUH} In fact, there is a very close
relationship between virtual OU tangles and Hopf algebras.  Denote by
$\vcalOU^p_q$ the set of OU tangles that have $p$ O-only strands
and $q$ U-only strands (it is a subset of $\vcalOU_{p+q}$). We claim
that $\vcalOU^p_q$ is precisely the set of ``universal formulas'' for
linear maps $\Hom(H^{\otimes p} \to H^{\otimes q})$, where $H$ is an
arbitrary involutive\footnote{Meaning that the antipode $S$ satisfies
$S^2=I$.} Hopf algebra\footnote{Or even, an involutive Hopf object in
a symmetric monoidal category.}: Meaning, those formulas that can be
written as an arbitrary composition of the structure maps $m$, $\Delta$,
$S$, $\epsilon$, and $\eta$ of $H$, and that make sense even if $H$
is infinite dimensional (so they contain no cycles).

\parpic[r]{\def\D{$\Delta$}\input{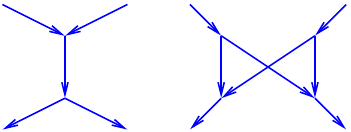_t}}
By means of an example and with all details suppressed,
Figure~\ref{fig:HopfWords} demonstrates how a
virtual O/U tangle becomes a Gauss diagram and then a universal Hopf
formula. Furthermore, one may show that the relation between the product $m$ and the
coproduct $\Delta$ in a Hopf algebra (illustrated on the right) can be
used to bring all coproducts in a universal Hopf formula to before all
the products, and hence every universal Hopf formula comes from an O/U
tangle as in Figure~\ref{fig:HopfWords}. \endpar{\ref{disc:OUH}}

\end{discussion}

\begin{figure}
\[ \def\D{$\Delta$} \input{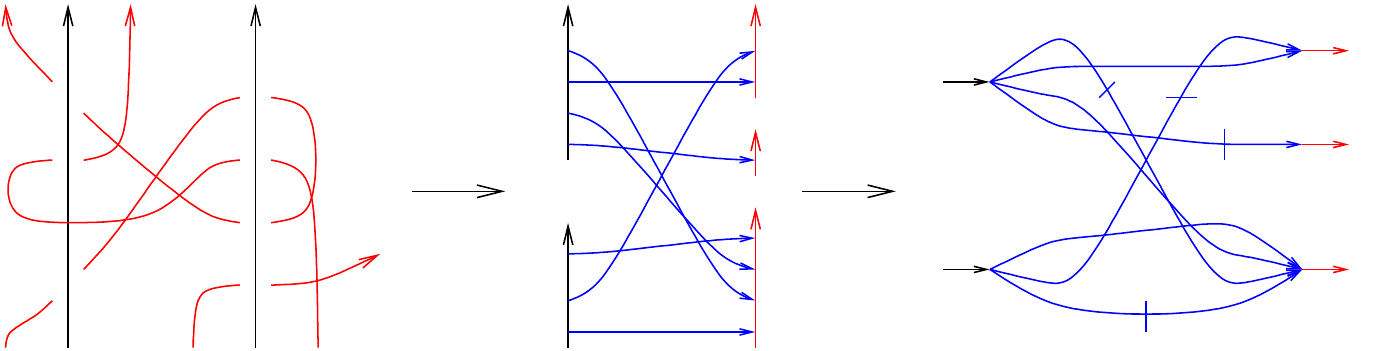_t} \]
\caption{
  A virtual O/U tangle in $\vcalOU^2_3$ becomes a Gauss diagram becomes
  a universal Hopf formula representing an element of $\Hom(H^{\otimes
  2} \to H^{\otimes 3})$. Note that the antipode $S$ is inserted on the
  $(-)$-marked edges of the Gauss diagram, which correspond to the negative
  crossings of the tangle.
} \label{fig:HopfWords}
\end{figure}

\begin{remark} The awkwardness of having to restrict to involutive Hopf algebras suggests that there may be an
alternative way to tell the story of this paper that does not require involutivity. Perhaps using ``rotational virtual
tangles''~\cite{Kauffman:RotationalVirtualKnots}.
\end{remark}

\begin{remark} The map $\Ch\colon\vcalPB_n\to\vcalROU_n$ along with
Discussion~\ref{disc:OUH} imply that there is an invariant of virtual
braids with values in $\End(H^{\otimes n})$, where $H$ is an involutive
Hopf algebra. Other such invariants exist~\cite{Woronowicz:Solutions,
MurakamiVanDerVeen:Quantized}. We expect that they are closely related.
\end{remark}

\begin{remark} \label{rem:core} It follows from the reasonings of
Section~\ref{sec:virtual} that it is possible to extract a maximal
braid out of an OU tangle, leaving behind a minimal ``core''
tangle. Precisly, if a virtual OU tangle $T$ is decomposed as $T=\beta'T'$
where $\beta'$ is a virtual pure braid and $T'$ is a virtual OU
tangle, and if $T'$ has the minimal possible crossing number for such
a decomposition, then $\beta'$ and $T'$ are uniquely detemined. Indeed,
let $(\beta',T')=f(I,T)$ be the final element guaranteed by the Diamond
Lemma (\ref{lem:diamond}) in the connected component of $(I,T)$ in
$\calX$. For example, if $T$ is $T_1$ of Example~\ref{exa:indivisible},
then $\beta'=\sigma_{12}$ and $T'$ is $T_2$ of~\ref{exa:indivisible}.

We do not know if the same is true for arbitrary virtual and/or classical
tangles. \endpar{\ref{rem:core}}
\end{remark}

\begin{discussion} \label{disc:EG}
There is a lovely visual side to the tools developed
for the proof of Chterental's Theorem (\ref{thm:vinj}). Given a reduced virtual OU tangle
$T\in\vcalROU_n$, we can consider the part $EG(T)$ of $\calX_n$ that
lies ``below'' $(I,T)$:
\[ EG(T) \coloneqq \left\{(\beta',T')\colon\, (I,T)\toto(\beta',T')\right\}. \]

\Needspace{48mm} 
\parpic[r]{
  \def\s#1#2{\,$\sigma_{#1#2}$} \def\si#1#2{\,$\sigma_{#1#2}^{-1}$}
  \input{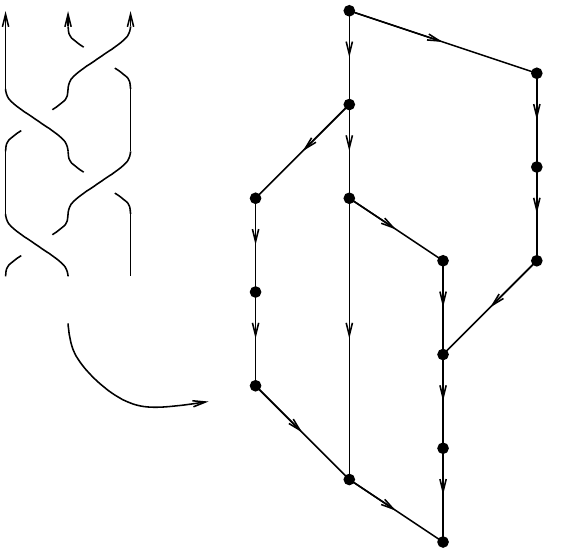_t}
}
We restrict the relation $\to$ to $EG(T)$, making it into a directed
graph that we name ``the extraction graph of $T$''. By first computing
$\barGamma\circ\bariota$ or $\Ch=\barGamma_v\circ\bariota_v$, we
can also define $EG(\beta)$ when $\beta$ is a braid or a virtual
braid. These graphs are in themselves invariants (defined on $\vcalOU_n$
or $\vcalPB_n$ or $\calB_n$). They are often visually pleasing: we
have already seen a few examples, in Examples~\ref{exa:indivisible}
and~\ref{exa:GarsideHexagon}, within Equation~\ref{eq:CinnamonRolls},
and in Figure~\ref{fig:Cases23}\footnotemark.  Another
example, the extraction graph of the classical braid
$\sigma_{21}^{-1}\sigma_{13}\sigma_{32}^{-1}\sigma_{21}$ whose closure is
the figure-8 knot, is here on the right (we label edges by the relevant
divisor $\sigma_{ij}^{\pm 1}$ and vertices by the value of $\xi$). Some
even nicer examples appear in Section~\ref{ssec:EG}.

\footnotetext{Figure~\ref{fig:SCR} is not example because it misses a
part of the graph. See Section~\ref{ssec:EG}.}

For any $T$, $EG(T)$ is a finite graph (for the set of potential divisors
$\{\sigma_{ij}^{\pm 1}\}$ is finite and and only finitely many divisions
can be carried out before we run out of crossings). $EG(T)$ always has
an ``initial'' vertex $i$ (the pair $(I,T)$) and a final vertex $f$
--- the final element that is guaranteed by the Diamond Lemma and that
is discussed in Remark~\ref{rem:core}. Every vertex $v$ of $EG(T)$
is sandwiched between the two: $i\toto v\toto f$. Every ``wedge'' in
$EG(T)$ (Definition~\ref{def:diamond}) can be completed to a diamond of
one of the types appearing in Figures \ref{fig:SCR}, \ref{fig:Cases23},
\ref{fig:CaseM}, and~\ref{fig:Square} (hence all cycles in $EG(T)$ are
of even length, and hence $EG(T)$ is bipartite). If one travels from $i$
to $f$ along any path in $EG(T)$ while reading the generators indicated
on the edges, one always reads the same virtual braid.

If $T$ is $\barGamma(\iota(\beta))$ or $\Ch(\beta)$, the final
vertex $f$ of $EG(\beta)$ is $(\beta,I)$, and every path from $i$
to $f$ spells a braid word for $\beta$. Thus $EG(\beta)$ highlights
a finite set of ``special'' braid words for $\beta$. It follows from
Remark~\ref{rem:classical} that if $\beta$ is classical then all the
special words for it are classical too.

We don't really understand $EG(\beta)$ --- we don't know what properties
of $\beta$ can be read off $EG(\beta)$, and we don't know how to
characterize the ``special words'' for $\beta$ that appear in $EG(\beta)$
other than by repeating the definitions. \endpar{\ref{disc:EG}}

\end{discussion}

\draftcut {
\def\begin{picture}(0,0)%
\includegraphics{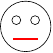}%
\end{picture}%
%
%
\setlength{\unitlength}{789sp}%
\begingroup\makeatletter\ifx\SetFigFont\undefined%
\gdef\SetFigFont#1#2#3#4#5{%
  \reset@font\fontsize{#1}{#2pt}%
  \fontfamily{#3}\fontseries{#4}\fontshape{#5}%
  \selectfont}%
\fi\endgroup%
\begin{picture}(1240,1240)(-19,-381)
\end{picture}%
{\begin{picture}(0,0)%
\includegraphics{figs/face.pdf}%
\end{picture}%
%
%
\setlength{\unitlength}{789sp}%
\begingroup\makeatletter\ifx\SetFigFont\undefined%
\gdef\SetFigFont#1#2#3#4#5{%
  \reset@font\fontsize{#1}{#2pt}%
  \fontfamily{#3}\fontseries{#4}\fontshape{#5}%
  \selectfont}%
\fi\endgroup%
\begin{picture}(1240,1240)(-19,-381)
\end{picture}%
}
\def\begin{picture}(0,0)%
\includegraphics{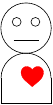}%
\end{picture}%
%
%
\setlength{\unitlength}{789sp}%
\begingroup\makeatletter\ifx\SetFigFont\undefined%
\gdef\SetFigFont#1#2#3#4#5{%
  \reset@font\fontsize{#1}{#2pt}%
  \fontfamily{#3}\fontseries{#4}\fontshape{#5}%
  \selectfont}%
\fi\endgroup%
\begin{picture}(1244,2442)(-21,-1583)
\end{picture}%
{\begin{picture}(0,0)%
\includegraphics{figs/human.pdf}%
\end{picture}%
%
%
\setlength{\unitlength}{789sp}%
\begingroup\makeatletter\ifx\SetFigFont\undefined%
\gdef\SetFigFont#1#2#3#4#5{%
  \reset@font\fontsize{#1}{#2pt}%
  \fontfamily{#3}\fontseries{#4}\fontshape{#5}%
  \selectfont}%
\fi\endgroup%
\begin{picture}(1244,2442)(-21,-1583)
\end{picture}%
}
\def\begin{picture}(0,0)%
\includegraphics{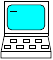}%
\end{picture}%
%
%
\setlength{\unitlength}{789sp}%
\begingroup\makeatletter\ifx\SetFigFont\undefined%
\gdef\SetFigFont#1#2#3#4#5{%
  \reset@font\fontsize{#1}{#2pt}%
  \fontfamily{#3}\fontseries{#4}\fontshape{#5}%
  \selectfont}%
\fi\endgroup%
\begin{picture}(1244,1394)(-21,-608)
\end{picture}%
{\begin{picture}(0,0)%
\includegraphics{figs/machine.pdf}%
\end{picture}%
%
%
\setlength{\unitlength}{789sp}%
\begingroup\makeatletter\ifx\SetFigFont\undefined%
\gdef\SetFigFont#1#2#3#4#5{%
  \reset@font\fontsize{#1}{#2pt}%
  \fontfamily{#3}\fontseries{#4}\fontshape{#5}%
  \selectfont}%
\fi\endgroup%
\begin{picture}(1244,1394)(-21,-608)
\end{picture}%
}

\def\cellscale{1}
\newsavebox\pdfbox
\newlength{\pdfheight}

\def\nbpdfInput#1{{%
  \savebox{\pdfbox}{\includegraphics[scale=\cellscale]{#1}}%
  \settoheight{\pdfheight}{\usebox{\pdfbox}}%
  \noindent\imagetop{\ifdim\pdfheight<10mm\else\fi}\ %
  \imagetop{\usebox{\pdfbox}}%
  \vskip 2mm%
}}

\def\nbpdfOutput#1{{\noindent{\imagetop{}\ \imagetop{\includegraphics[scale=\cellscale]{#1}}\vskip 2mm}}}

\def\m#1{\text{\tt #1}}

\section{Some Computations} \label{sec:comp}

When mathematics is computable, we feel it is appropriate and necessary to include an implementation. In this case, the implementation is concise and follows the notation and logical structure of a paper, so we choose to include it as an integral part of that paper. We hope that the programs presented here serve as an illustration of the overall simplicity and validity of the ideas within the paper, and that they encourage others to play and further discover. The implementations also lead to new enumerations -- the tables in Sections~\ref{ssec:VB}~and~\ref{ssec:CB} -- and to intriguing and appealing graph-valued invariants -- Section~\ref{ssec:EG} -- which cannot be computed otherwise, and which may lead to further study.

All code here is written in {\sl Mathematica}~\cite{Wolfram:Mathematica} and is available as the {\sl Mathematica} notebook {\sl SomeComputations.nb} at~\cite{Self}.

\parpic[r]{\begin{picture}(0,0)%
\includegraphics{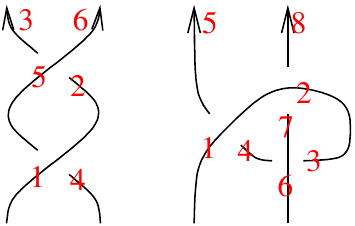}%
\end{picture}%
%
%
\setlength{\unitlength}{3947sp}%
\begingroup\makeatletter\ifx\SetFigFont\undefined%
\gdef\SetFigFont#1#2#3#4#5{%
  \reset@font\fontsize{#1}{#2pt}%
  \fontfamily{#3}\fontseries{#4}\fontshape{#5}%
  \selectfont}%
\fi\endgroup%
\begin{picture}(1693,1074)(-30,-223)
\end{picture}%
}
\subsection{Implementing virtual OU tangles, virtual braids, and $\Ch$}
To represent a virtual tangle diagram $D$ on the computer, we order its strands and traverse each of them in order, marking each ``O'' point, each ``U'' point, and each end of strand, with the integers $1,2,3,\ldots$, in the order in which they are encountered. See examples on the right. For each crossing $x$ of $D$ we form a {\sl Mathematica} expression $\m{X}_s[i,j]$, where $s$ is the sign of the crossing and $i$ and $j$ are the markings next to the O side and the U side of $x$, respectively. We also form an expression $\m{EOS}[k]$ for each end-of-strand marked $k$. We toss all this information into a container \m{VD}, and the result is our computer representation of $D$. Below, \m{vd1} and \m{vd2} are the results of this process for the two example tangles shown here.

\noindent\nbpdfInput{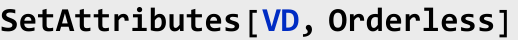}

\noindent\nbpdfInput{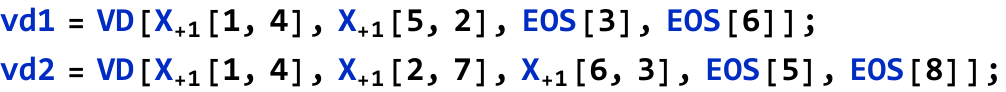}

Sometimes in a \m{VD} we allow to label O/U/\m{EOS} points by arbitrary real numbers, for in fact, only the ordering of these points matter. The routine \m{Tidy} takes a real-ordered \m{VD} and converts it to a sequentially ordered one. Thus it brings a \m{VD} to a ``canonical form'':

\noindent\nbpdfInput{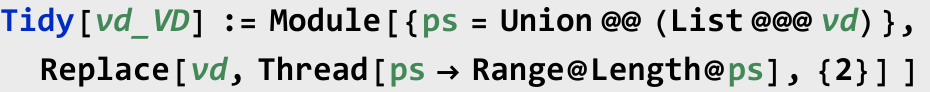}

\noindent\nbpdfInput{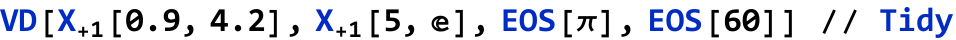}

\noindent\nbpdfOutput{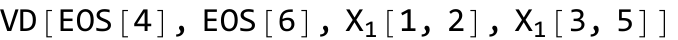}

The routine \m{R12Reduce1} reduces a virtual diagram by performing one R2 or R1 move, if such a move is available, and otherwise it does nothing. The routine \m{R12Reduce} finds the fixed point of \m{R12Reduce1} --- in other words, it reduces a virtual diagram using all available R1 and R2 moves.

\noindent\nbpdfInput{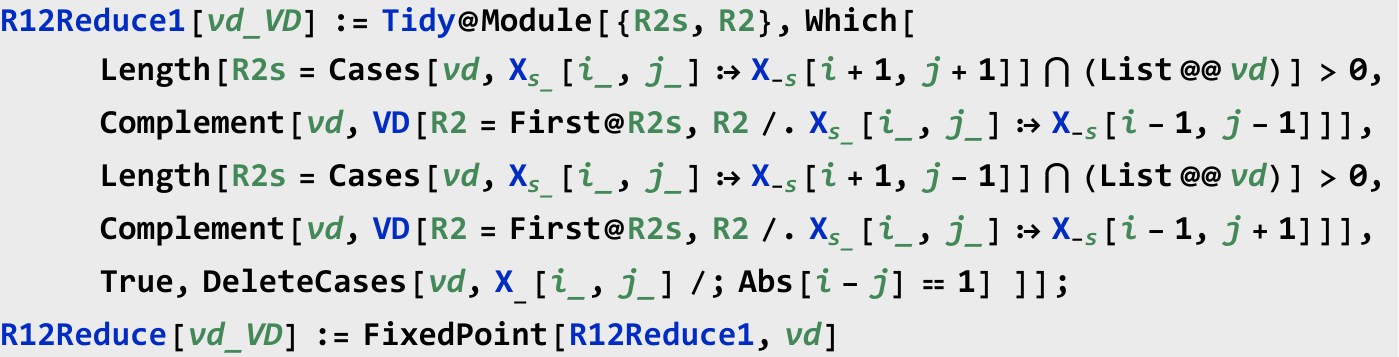}

Here's a very minor example:

\noindent\nbpdfInput{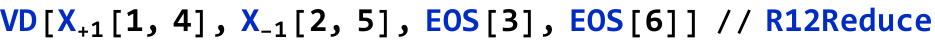}

\noindent\nbpdfOutput{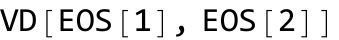}

\Needspace{30mm} 
\parpic[r]{
  \def\Xa{$\m{X}_{s_1}[i_1,j_1]$}   \def\Xb{$\m{X}_{s_2}[i_2,j_2]$}
  \def\Xc{$\m{X}_{s_2}[j_1,j_2]$}   \def\Xd{$\m{X}_{s_1}[i_1,i_2]$}
  \def\Xe{$\m{X}_{s_1s_2}[i_1\!-\!s_1/3,j_2\!+\!s_2/3]$}   \def\Xf{$\m{X}_{-s_1s_2}[i_1\!+\!s_1/3,j_2\!-\!s_2/3]$}
  \input{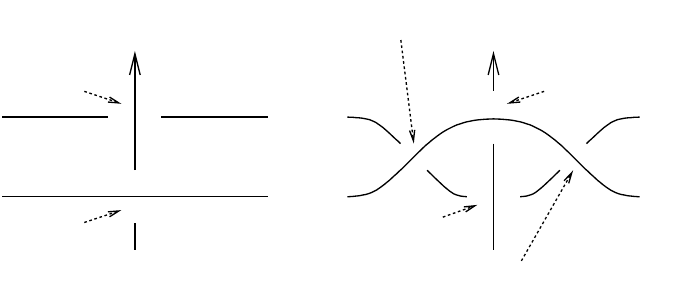_t}}
In a similar manner, \m{$\Gamma$1} performs one glide move if one is available, and \m{$\barGamma$} fully reduces under both glide moves and R1 and R2 moves. Here we bound the number of iterations by $2^{24}$, to artificially stop runaway reductions such as the one in Figure~\ref{fig:swirls}.

\noindent\nbpdfInput{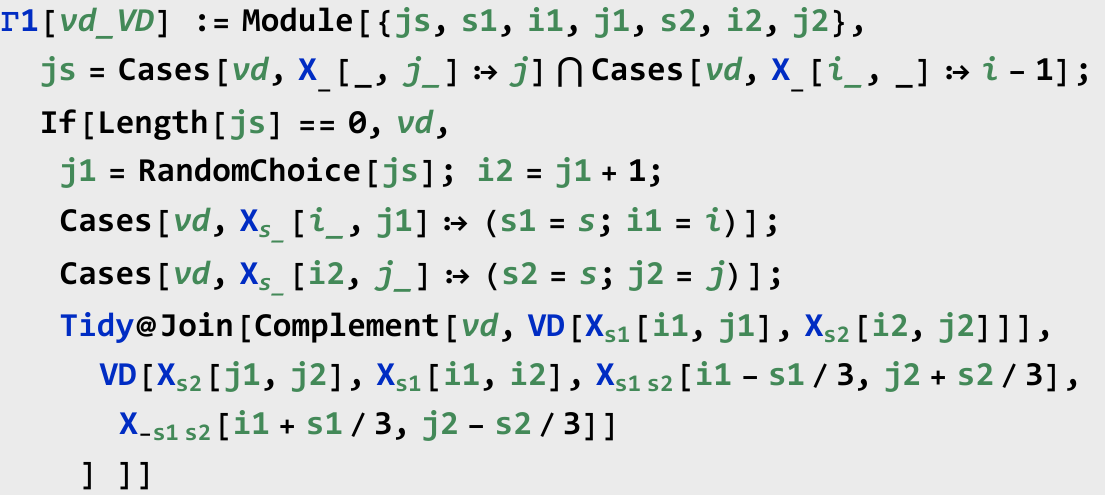}

\noindent\nbpdfInput{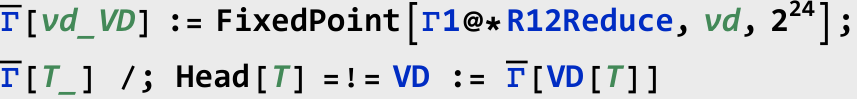}

As expected, $\barGamma(\m{vd1})=\m{vd2}$:

\noindent\nbpdfInput{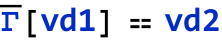}

\noindent\nbpdfOutput{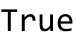}

Next we define the composition operation \m{d1**d2} of virtual tangle diagrams. The implementation works by ``shrinking'' \m{d2} so that each of its strands would fit between the last crossing in the corresponding strand of \m{d1} and the \m{EOS} at the end of that strand of \m{d1}, then taking the union of \m{d1} and the shrank \m{d2}, and then applying \m{Tidy} to the result:

\noindent\nbpdfInput{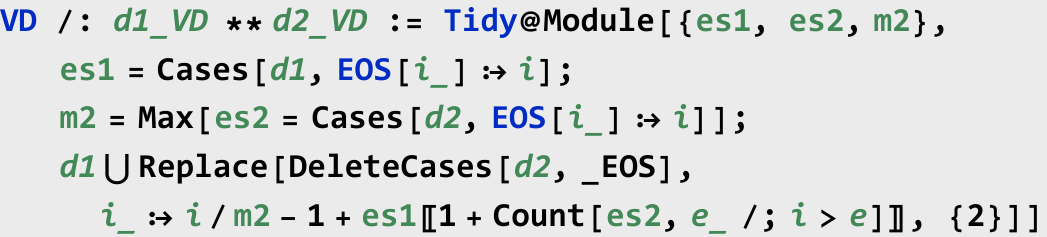}

For example, ``our'' \m{vd2} has $3$ crossings yet is equivalent to a 2-twist braid. So $\m{vd1}\cdot\m{vd2}$ ought to have $6$ crossings while its reduced OU form, $\barGamma(\m{vd1}\cdot\m{vd2})$ should be the Cinnamon Roll $\CR_4$, which has $7$ crossings. The computer agrees:

\noindent\nbpdfInput{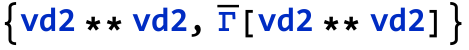}

\noindent\nbpdfOutput{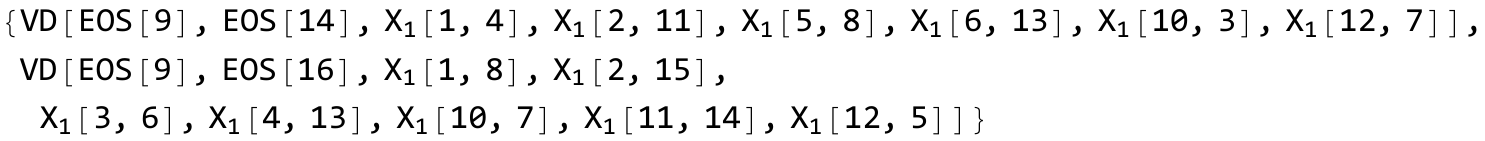}

Next we implement virtual pure braids, and it is best to start with an example. We represent the 3-strand virtual pure braid $\beta=\sigma_{21}^{-1}\sigma_{13}\sigma_{31}\sigma_{13}\sigma_{31}\sigma_{13}\sigma_{23}\sigma_{21}$ of Example~\ref{exa:SCR} by the {\sl Mathematica} expression below:

\noindent\nbpdfInput{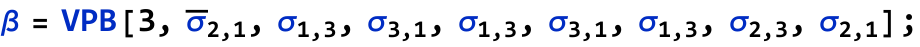}

The conversion of \m{VPB}s into \m{VD}s is quite easy. We just need to define it on the generators and then use the already-available composition of \m{VD}s to extend the definition to products of generators:

\noindent\nbpdfInput{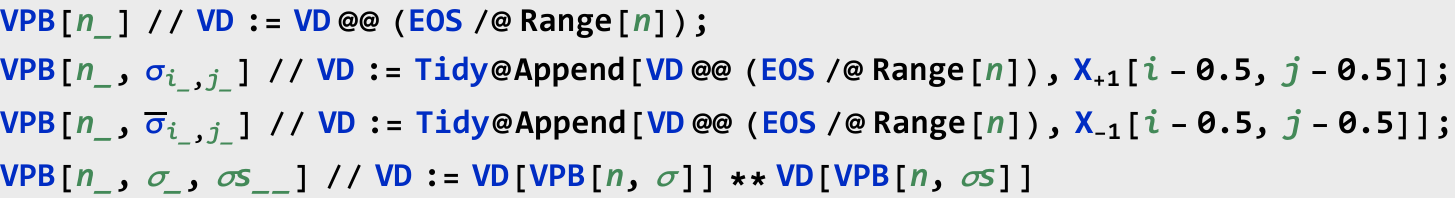}

We can compute $\Ch(\beta)=\barGamma_v(\bariota_v(\beta))$ (count that it has 18 \m{X} symbols, just as Figure~\ref{fig:SCR}~(A) has 18 crossings!):

\noindent\nbpdfInput{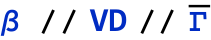}

\noindent\nbpdfOutput{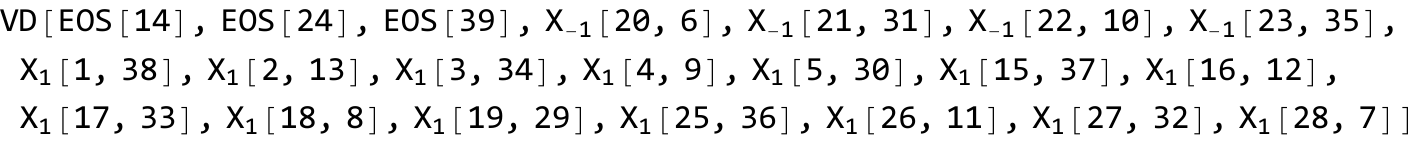}

We can even verify Equation~\eqref{eq:SCRBraids1}:

\noindent\nbpdfInput{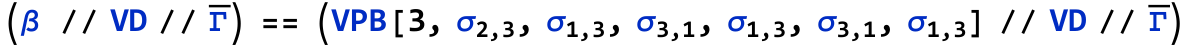}

\noindent\nbpdfOutput{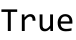}

\subsection{Tabulating Virtual Pure Braids} \label{ssec:VB}
Our next task is to tabulate virtual pure braids with a given number of strands $n$ and a bound $m$ on the number of crossings. The first routine, \m{VPBGens}, outputs the list of all generators of $\vcalPB_n$:

\noindent\nbpdfInput{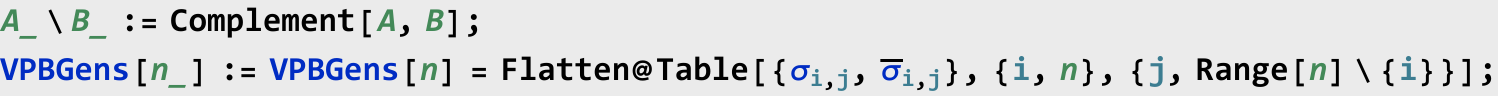}

\noindent\nbpdfInput{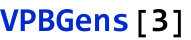}

\noindent\nbpdfOutput{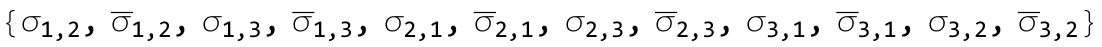}

Next we'd like to generate all words in the generators we just computed, and separate them using $\Ch$ and Chterental's Theorem (\ref{thm:vinj}). To save some computer effort, we generate only ``proud'' words --- words that do not contain a letter followed by its inverse, or adjacent commuting letters that are not in lexicographic order. The ``Proud Followers'' \m{PF} of a generator are those generators that can follow it without ruining the pride of a word:

\noindent\nbpdfInput{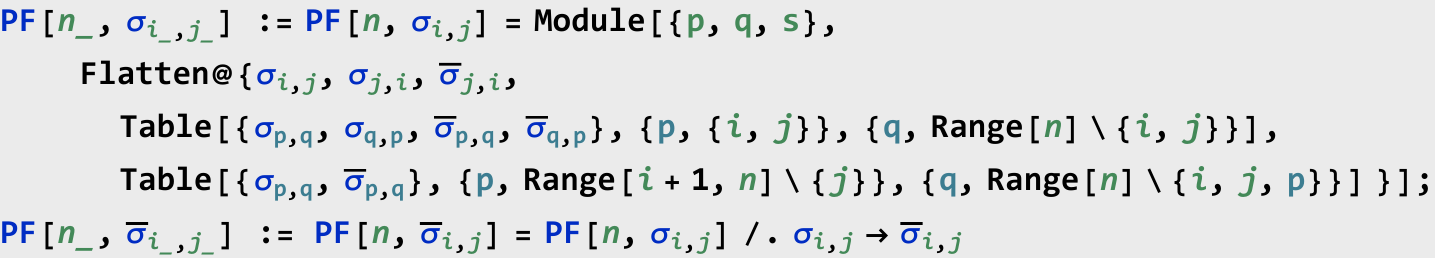}

\noindent\nbpdfInput{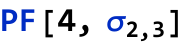}

\noindent\nbpdfOutput{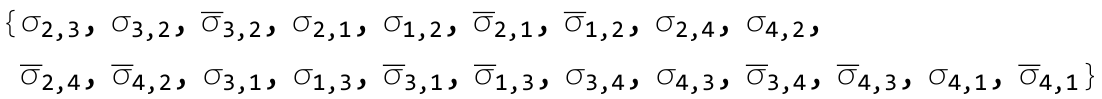}

And then $\m{PVPBDs}[n,m]$ computes all Proud Virtual Pure Braid Diagrams on $n$ strands and with $m$ crossings:

\noindent\nbpdfInput{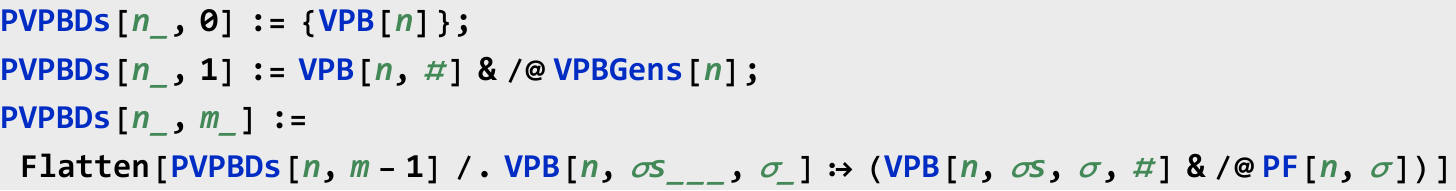}

\noindent\nbpdfInput{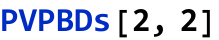}

\noindent\nbpdfOutput{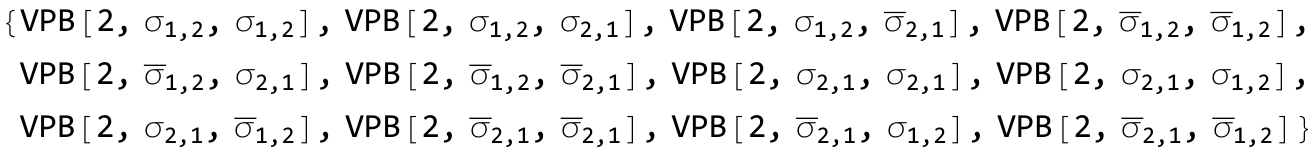}

These sets grow very rapidly:

\noindent\nbpdfInput{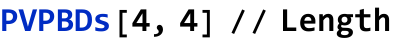}

\noindent\nbpdfOutput{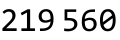}

$\m {AllVPBs}[n, m] $ finds representatives for all virtual braids on $n$ strands with at most $m$ crossings, by using $\m{PVPBDs}[n,m]$ and then deleting duplicates by $\barGamma_v$:

\noindent\nbpdfInput{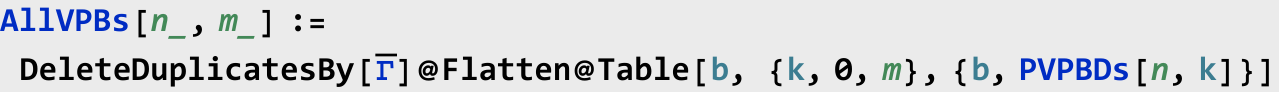}

\noindent\nbpdfInput{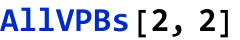}

\noindent\nbpdfOutput{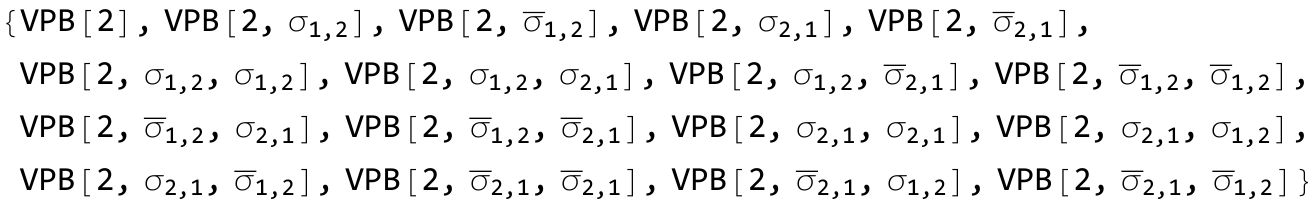}

There are 15,156 virtual pure braids with 3 strands and precisely 4 crossings (meaning, braids in $\m {AllVPBs}[3,4] $ but excluding those in $\m {AllVPBs}[3,3]$). It took our computer about 86 seconds to figure that out:

\noindent\nbpdfInput{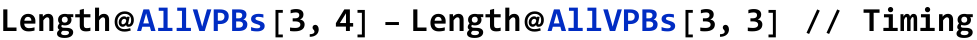}

\noindent\nbpdfOutput{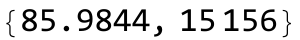}

\Needspace{65mm} 
\parpic[r]{\setlength{\tabcolsep}{3pt}\begin{tabular}{|c|c|c|c|c|c|}
\hline
$m\backslash n$	&	2 &		3 &			4 &			5 &			6 \\
\hline
0				&	1 &		1 &			1 &			1 &			1 \\
1				&	4 &		12 &			24 &			40 &			60 \\
2				&	12 &		132 &		504 &		1,320 &		2,820 \\
3				&       36 &		1,416 &		10,344 &             	41,760 & 		124,140 \\
4				&       108 &	15,156 &		211,416 &            1,308,360 &	5,357,700 \\
5				&       324 &	162,156 &	4,317,912 &	&			\\
6				&       972 &	1,734,864 &	&			&			\\
\hline
\end{tabular}}

In our spare time we have tabulated the numbers of $n$-strand pure virtual braids with precisely $m$ crossings for some small values of $n$ and $m$. The results are on the right, and data files containing the actual braids are at~\cite{Self}. As a test of the integrity of our programs we also computed most of the numbers in this table by generating all braid words and reducing modulo all relations\footnotemark. The numbers match. See the {\sl Mathematica} notebook {\sl VPBByGensAndRels.nb} at~\cite{Self}.
\footnotetext{Sometimes two braid words of length $m_1$ are related by a chain of relations that pass through words of length $m_2$, where $m_2>m_1$, and we do not know in advance a bound on $m_2$. Hence the computation using generators and relations is slow (as we have to raise $m_2$ and the number of words to consider grows very big) and unreliable (strictly speaking, we only get upper bounds on the braid counts).}

\subsection{Tabulating Classical Braids} \label{ssec:CB}
It is a bit odd that we have not seen a table such as the one above, but for classical braids. As the classical braid group is automatic~\cite{Epstein:WordProcessing} and hence the word problem in it is very easy, there are much better in-theory tools than ours to produce such a table. Yet our tools are implemented in practice, and we may as well use them.

First, we need to be able to convert from a standard classical braid notation~\cite{Bar-NatanMorrison:KnotTheory} to the \m{VPB} notation used here.

\noindent\nbpdfInput{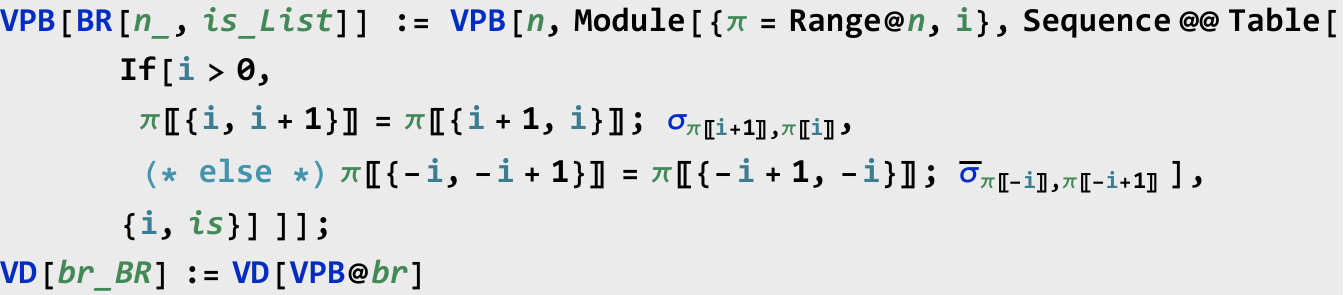}

\noindent\nbpdfInput{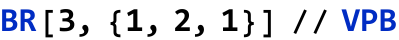}

\noindent\nbpdfOutput{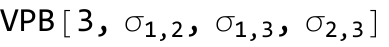}

After that, we repeat the same steps as in the virtual case:

\noindent\nbpdfInput{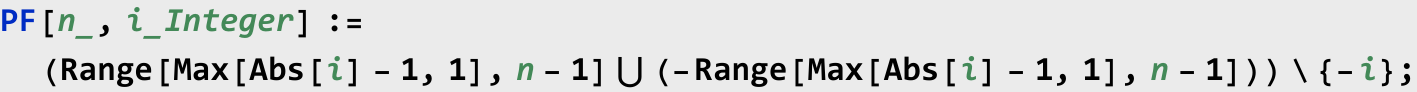}

\noindent\nbpdfInput{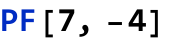}

\noindent\nbpdfOutput{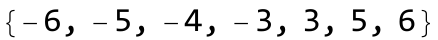}

\noindent\nbpdfInput{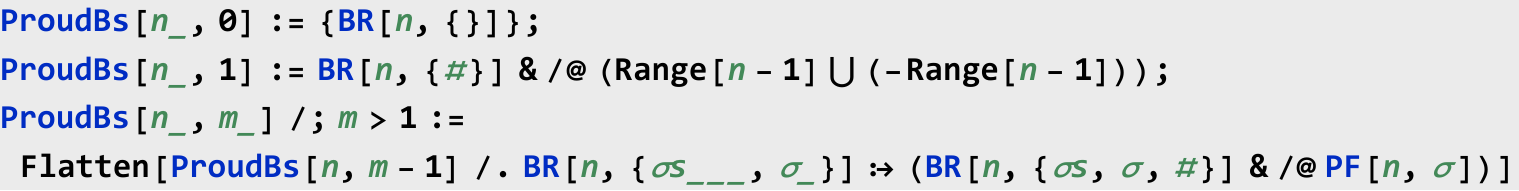}

\noindent\nbpdfInput{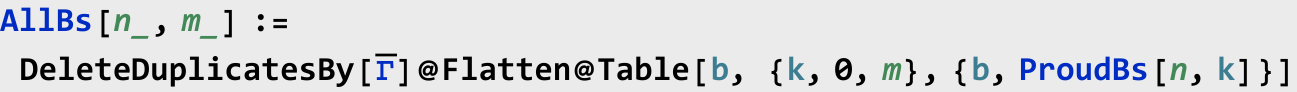}

For example, here are all the distinct positive 3-strand braids:

\noindent\nbpdfInput{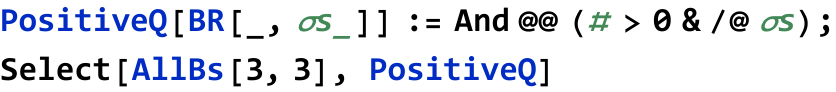}

\noindent\nbpdfOutput{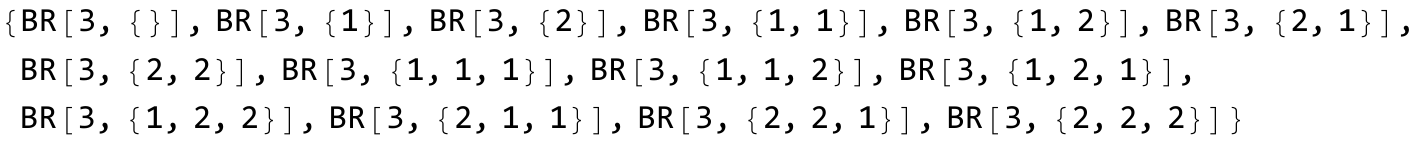}

On our computer, it takes about 20 seconds to find that there are 1,110 classical braids with 4 strands and crossing number equal to 5:

\noindent\nbpdfInput{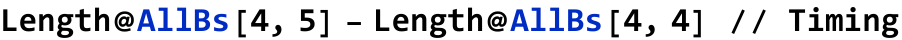}

\noindent\nbpdfOutput{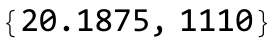}

\parpic[r]{\setlength{\tabcolsep}{3pt}\begin{tabular}{|c|c|c|c|c|c|}
\hline
$m\backslash n$	&	2 &	3 &		4 &		5 &		6		\\
\hline
0				&	1 &	1 &		1 &		1 &		1		\\
1				&	2 &	4 &		6 &		8 &		10		\\
2				&	2 &	12 &		26 &		44 &		66		\\
3				&	2 &	30 &		98 &		206 &	362		\\
4				&	2 &	68 &		338 &	884 &	1,794	\\
5				&	2 &	148 &	1,110 &	3,600 &	8,370	\\
6				&	2 &	314 &	3,542 &	14,198 &	37,606	\\
7				&	2 &	656 &	11,098 &	54,876 &	164,910	\\
8				&	2 &	1,356 &	34,362 &	209,348 &	711,746	\\
9				&	2 &	2,782 &	105,546 &	791,798 &	3,039,546	\\
\hline
\end{tabular}}
And here's a table of the numbers of $n$-strand pure virtual braids with precisely $m$ crossings, for small values of $n$ and $m$. The data files containing the actual braids are at~\cite{Self}.

Note that the entries in the $n=3$ column of this table fit with the sequence $6\cdot 2^m-2F_{m+3}-2$, where $F_m$ is the $m$th Fibonacci number:

\noindent\nbpdfInput{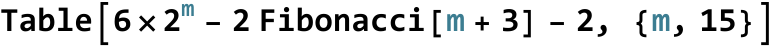}

\noindent\nbpdfOutput{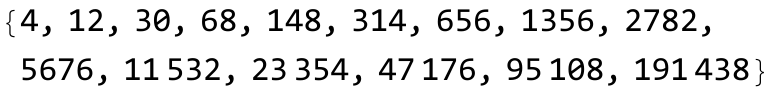}

\picskip{0}
The fit persists at least up to $m=12$. We do not know why this is so.

\subsection{Extraction Graphs} \label{ssec:EG}
We can now write a short program \m{EG}, to compute and display Extraction Graphs as in Discussion~\ref{disc:EG}.

\noindent\nbpdfInput{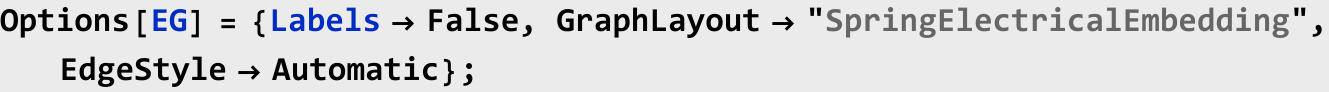}

\noindent\nbpdfInput{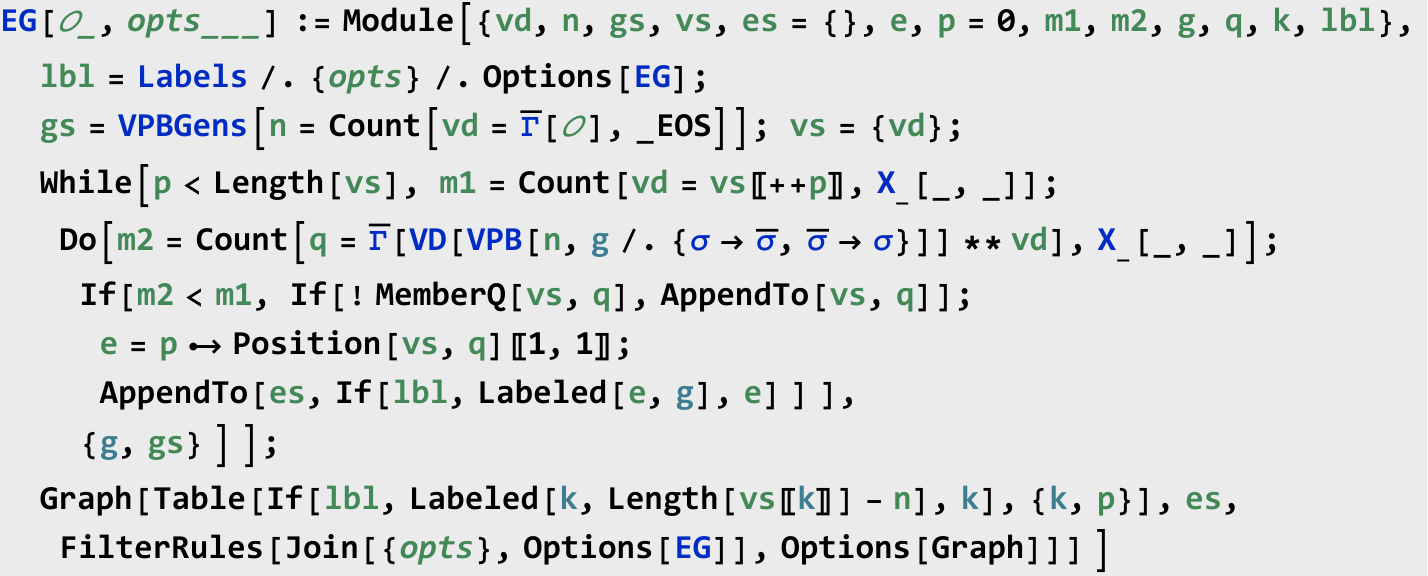}

Note that the diamond in Figure~\ref{fig:SCR} is genuine, but it is not an extraction graph, because the full extraction graph of the initial OU tangle of that figure contains two further edges:

\noindent\nbpdfInput{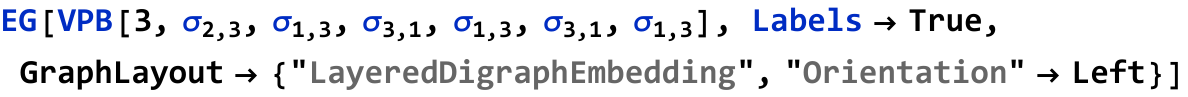}

\noindent\nbpdfOutput{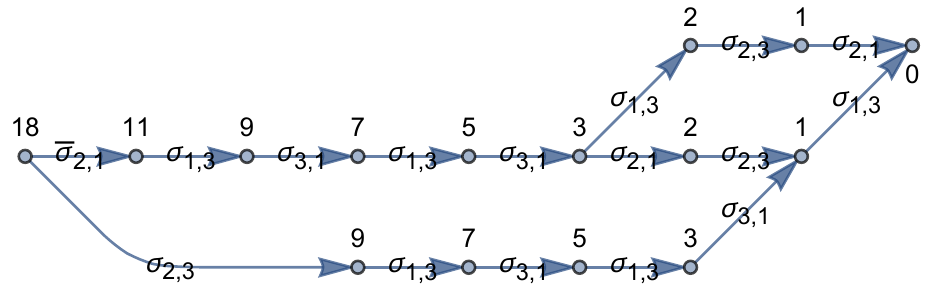}

The braid below, suggested to us by B.~Wiest, has a linear extraction graph and hence a unique ``special word'' (see Discussion~\ref{disc:EG}), but that word is of length 13, whereas the braid can be presented by a shorter word $\beta$, of length 11:

\noindent\nbpdfInput{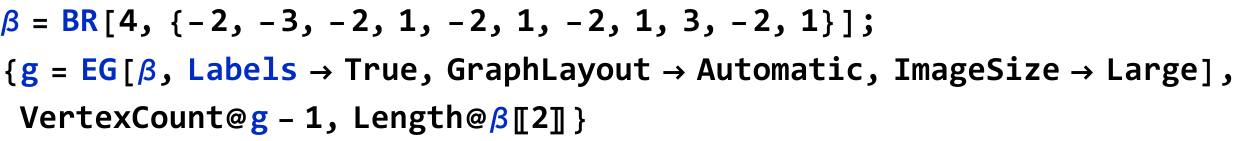}

\noindent\nbpdfOutput{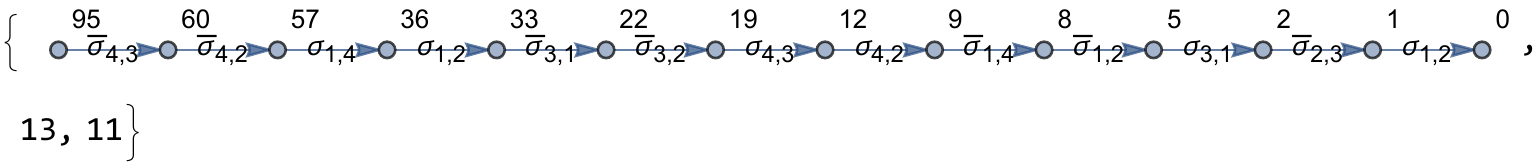}

It is easy to see that the extraction graph of the 4-crossing 8-strand braid $\overcrossing\ \overcrossing\ \overcrossing\ \overcrossing$ is the tesseract:

\noindent\nbpdfInput{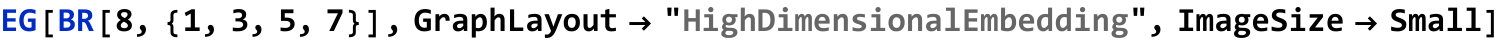}

\noindent\nbpdfOutput{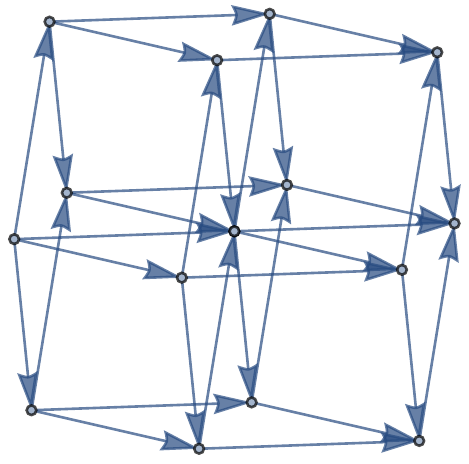}

The extraction graphs of Garside braids seem to be permutahedra (we did not attempt to prove this in general):

\noindent\nbpdfInput{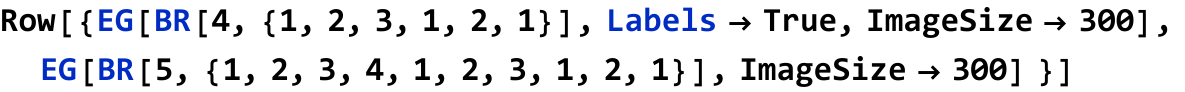}

\noindent\nbpdfOutput{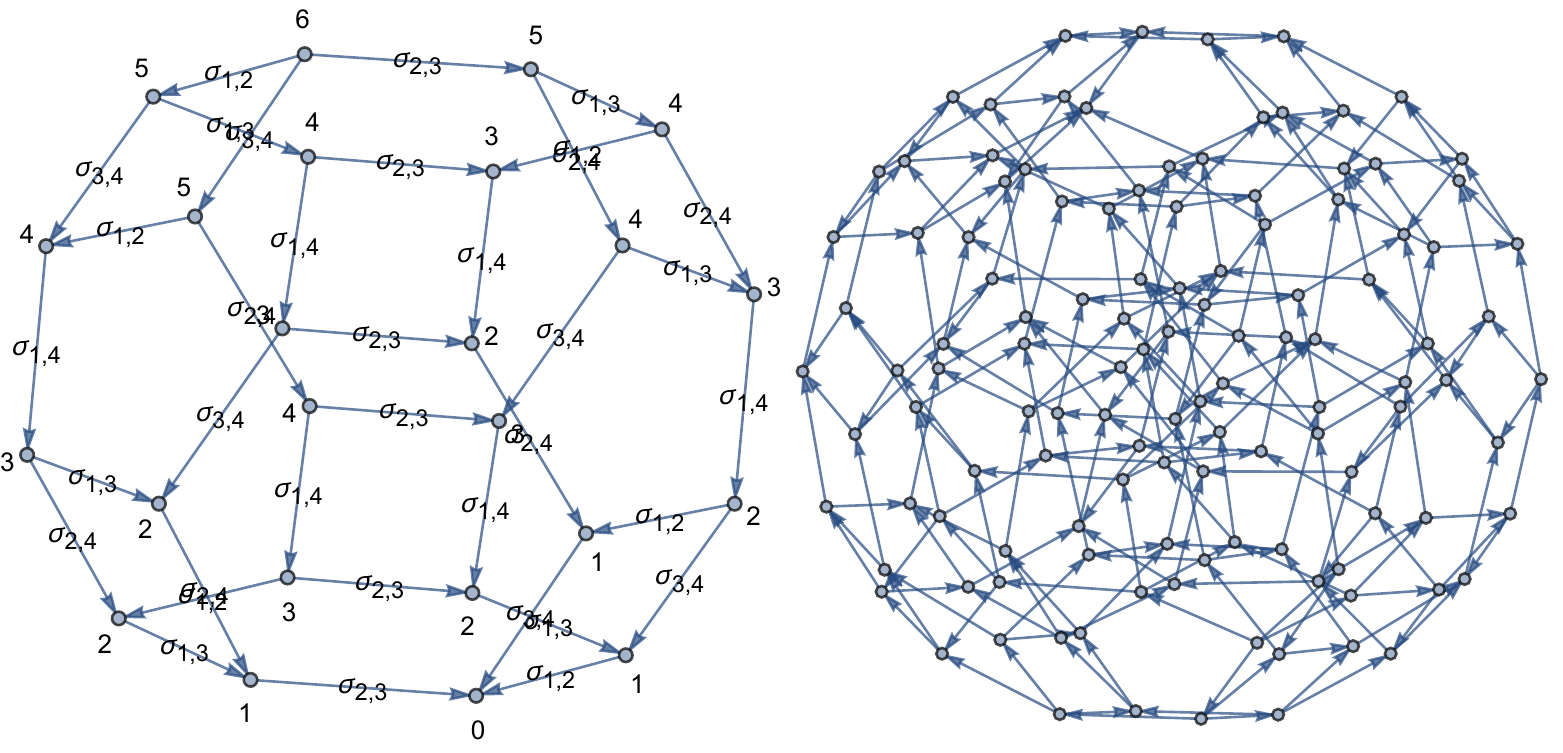}

Sometimes extraction graphs can be amusing. In no particular order, here are a lifesaver, an impressionistic map of the US state of Iowa, a torch flame, a legless bird, a feather, a ladder, a tennis racket, and a mouse trap:

\noindent\nbpdfInput{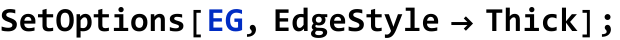}

\noindent\nbpdfInput{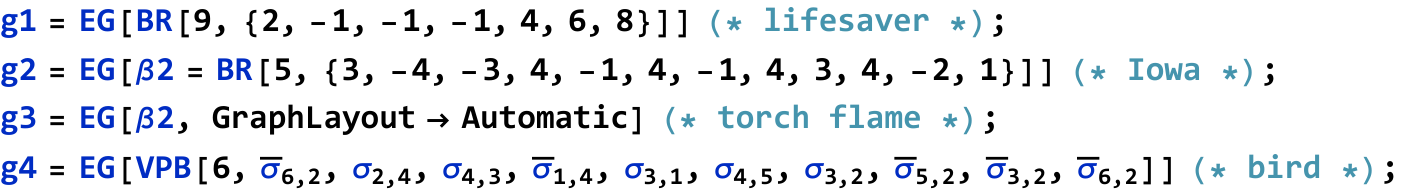}

\noindent\nbpdfInput{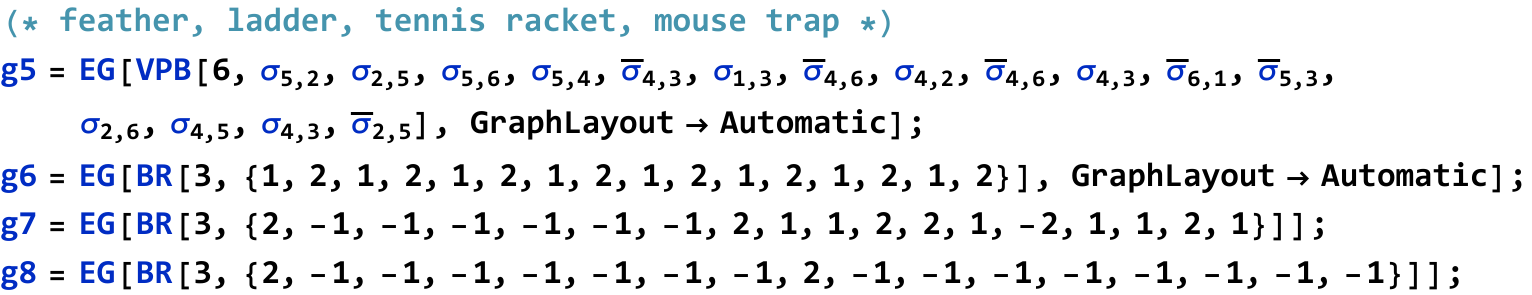}

\noindent\nbpdfInput{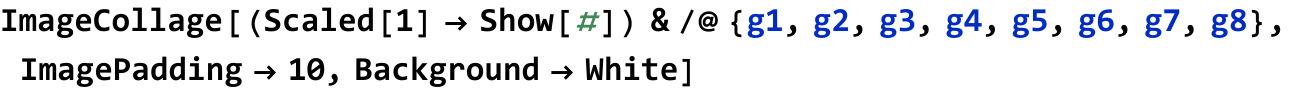}

\noindent\nbpdfOutput{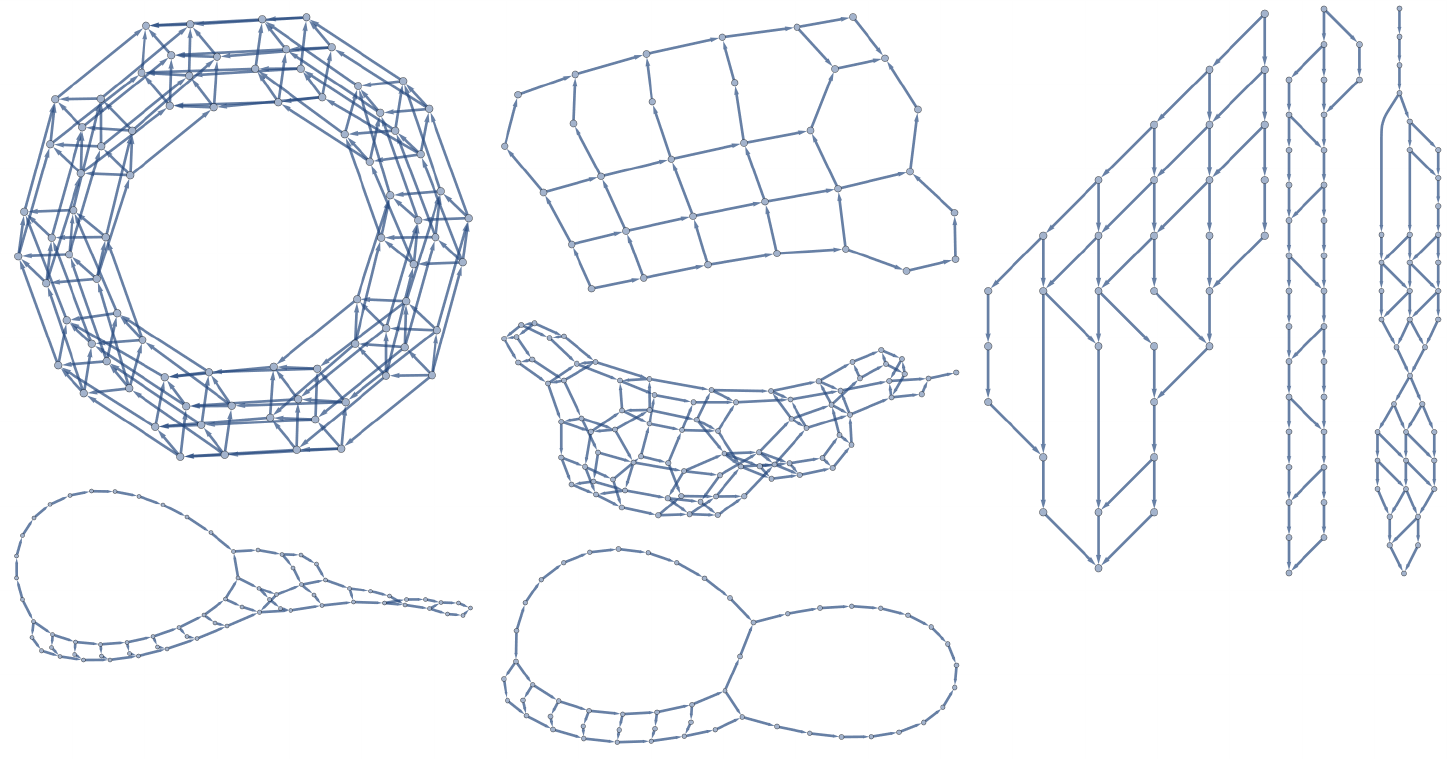}

We don't know what, if any, can be learned about braids from these graphs, and we can only hope the referee will forgive us for having a bit of fun.

\subsection{Computational Complexity} \label{ssec:CC}
Looking again at Figure~\ref{fig:divquo}~(C), we see that in the worst case, if the crossing number $\xi(T)$ of an OU tangle $T$ is $p$, the crossing number of the OU version of $\sigma_{ij}^{\pm 1}T$ might be as big as $3p+1$, and hence the complexity of computing $\Ch$ grows exponentially. Here are the ``worst'' classical and virtual braids with 8 crossings. A bit more is in the {\sl Mathematica} notebook {\sl TheWorstBraids.nb} at~\cite{Self}.

\noindent\nbpdfInput{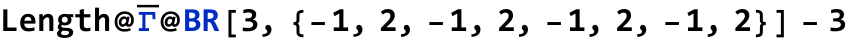}

\noindent\nbpdfOutput{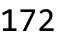}

\noindent\nbpdfInput{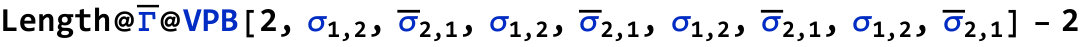}

\noindent\nbpdfOutput{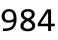}

}

\draftcut \section{There's more!} \label{sec:more}

There's more! In fact, OU tangles and OU ideas seem prevalent in knot
theory, even though it seems that nobody collected all these ideas
together before. In fact, possibly the most
important contribution of this paper is the observation that, in addition to the detailed examples that we studied throughout, everything mentioned in
this section is OU-related.

\subsection{Weakening the Bond}

The Gliding Fheorem (\ref{fhm:every}) fails because the bond between the strands of
a single crossing is too strong; they cannot be separated to be taken
for rides along other strands in an independent manner: when the U and the
O of a UO interval belong to the same crossing, one cannot glide them
independently of each other and across each other as the glide move of
Figure~\ref{fig:Gliding} dictates. So we seek to weaken this bond.

\Needspace{39mm} 
\parpic[r]{\input{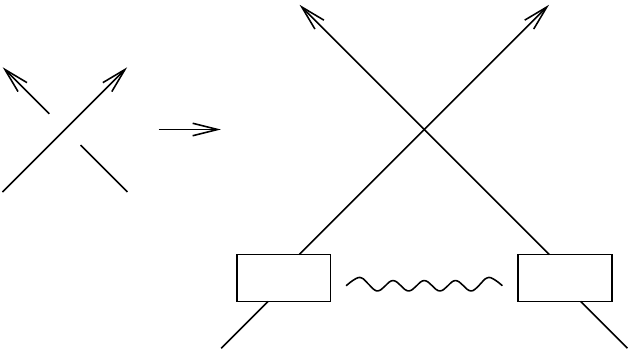_t}}
One way to do so is with algebra. One aims to construct invariants of
tangles by placing ``R-matrices'' on positive crossings (and their
inverses on negative crossings). An R-matrix is an element
$R=\sum_i{b_i\otimes a_i}\in H\otimes H$ in the tensor square of some
algebra $H$, and its $b_i$ side is placed on the O side of the crossing
while its $a_i$ side is put on the U side. This done, one multiplies the
algebra elements seen on each strand in the order in which they appear
along it, and the hope is that the result would be an invariant of the
tangle, living in $H^{\otimes S}$ where $S$ is the set of strands.

In this context ``O'' becomes ``$b_i$'' and ``U'' becomes ``$a_i$'',
and the bond between O and U is nearly severed --- within a long
product, given the appropriate commutation relations, $b_i$'s can
be commuted against $a_i$'s whether or not they originally came from
the same crossing. Further effort is needed in order to make use of
this fact, and it is beyond the scope of this summary to reproduce
this effort here. Yet the result becomes ``something from nothing'':
given relatively little input, a construction of an R-matrix and the
algebra $H$ in which it lives. This construction is better known as
``the Drinfel'd double construction''. See more at~\cite{Talk:OU} and
hopefully in a future publication.

\Needspace{40mm} 
\parpic[r]{\begin{picture}(0,0)%
\includegraphics{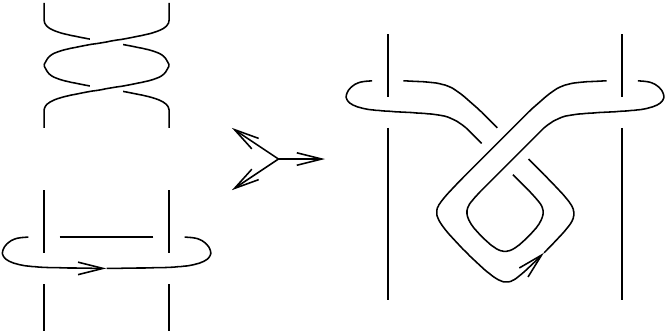}%
\end{picture}%
%
%
\setlength{\unitlength}{3947sp}%
\begingroup\makeatletter\ifx\SetFigFont\undefined%
\gdef\SetFigFont#1#2#3#4#5{%
  \reset@font\fontsize{#1}{#2pt}%
  \fontfamily{#3}\fontseries{#4}\fontshape{#5}%
  \selectfont}%
\fi\endgroup%
\begin{picture}(3199,1599)(14,-748)
\put(526,-661){\makebox(0,0)[b]{\smash{{\SetFigFont{10}{12.0}{\rmdefault}{\mddefault}{\updefault}{\color[rgb]{0,0,0}$+1$}%
}}}}
\put(2626,-511){\makebox(0,0)[lb]{\smash{{\SetFigFont{10}{12.0}{\rmdefault}{\mddefault}{\updefault}{\color[rgb]{0,0,0}$+1$}%
}}}}
\end{picture}%
}
Another way to weaken the bond between the O side and the U side of a single crossing is to represent crossings
using surgery. A quick summary is on the right: a crossing can be created using a $+1$ surgery on a loop
surrounding the two strands to be crossed, and that loop is relatively loose bond between these two strands,
for in itself it can be pushed around.

This story is imprecise and incomplete: Imprecise because strictly speaking, the surgery shown created two
crossings and not just one. Incomplete in several ways; the most important is that general surgeries can
change the ambient space from $S^3$ into another 3-manifold, and thus to properly pursue this idea one must
study an appropriate class of tangles in manifolds. See more at~\cite{ThurstonD:Sutured} and
hopefully in a future publication.

\subsection{Prior Art}
Milnor attempted to classify links up to link homotopy by introducing the
``reduced peripheral system''~\cite{Milnor:LinkGroups}. Unfortunately,
this is only a complete invariant of homotopy links with up to three
components.  In~\cite{AudouxMeilhan:PeripheralSystems} Audoux and Meilhan
use a gliding-type algorithm and OU links to give a full analysis of the
kernel of this invariant: namely, they prove that the reduced peripheral
system is a complete invariant of {\em welded} homotopy links (welded
links up to {\em self-virtualization} equivalence). Welded links are
closely related to knotted ribbon tori in $\bbR^4$, and homotopy links
map non-injectively into welded homotopy links by ``spinning around a
plane.'' See~\cite[Definition~1.7]{AudouxMeilhan:PeripheralSystems},
where OU links are called {\em sorted}. See
also~\cite[Definition~4.15]{AudoxBellingeriMeilhanWagner:WeldedStringLinks}
where they are called ``ascending''.

An earlier occurrence of OU ideas in the context of w-tangles is in the
paper~\cite{KBH} whose theme is the separation of hoops, that can only
go Under, from balloons, that go both Under and Over (so~\cite{KBH}
is a bit less ``pure'', as the balloons are not quite O). Later within
the same paper, and also within \cite{WKO2, WKO3, WKO4}, the associated
graded space of the space of w-tangles is studied, the space $\calA^w$
of ``arrow diagrams modulo the TC relation''. Furthermore that space is
studied using various ``Heads then Tails'' techniques, which in the language of the
current paper, correspond to UO presentations (not OU, but of course,
it's essentially the same). See especially~\cite[Section~2.4]{WKO4}.

\Needspace{15mm} 
\parpic[r]{\begin{picture}(0,0)%
\includegraphics{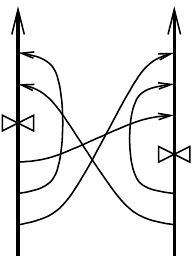}%
\end{picture}%
%
%
\setlength{\unitlength}{3947sp}%
\begingroup\makeatletter\ifx\SetFigFont\undefined%
\gdef\SetFigFont#1#2#3#4#5{%
  \reset@font\fontsize{#1}{#2pt}%
  \fontfamily{#3}\fontseries{#4}\fontshape{#5}%
  \selectfont}%
\fi\endgroup%
\begin{picture}(924,1244)(64,-758)
\end{picture}%
}
An even earlier occurrence of OU ideas, in the associated
graded $\calA^v$ context for virtual tangles, occurs in a
very well-hidden way within Enriquez' work on quantization
of Lie bialgebras~\cite{Enriquez:UniversalAlgebras,
Enriquez:QuantizationFunctors}. For example, his ``universal
algebras''~\cite[Section~1.3.2]{Enriquez:QuantizationFunctors} are
isomorphic to the space $\calA^v_{OU}$ of arrow diagrams as on the right,
in which all arrow tails occur before all arrow heads (that's OU!), and
is endowed with the product that $\calA^v_{OU}$ inherits from the stacking
product of $\calA^v$ (which is the analogue of the product used in our
paper). We are afraid that there aren't excellent introductions available
on $\calA^v$ and its relationship with virtual tangles. Hopefully we
will write one one day. Until then, some information is in~\cite{WKO2}
and in lecture series such as~\cite{Caen, MasterClass}. We also hope to
one day explain the Enriquez work as the construction of a ``homomorphic
expansion''~\cite{WKO1} for the space of virtual OU / acyclic tangles.

If $\frakg=\fraka^\star\bowtie\fraka$ is the double of a Lie bialgebra
$\fraka$, there is a standard interpretation of $\calA^v$ as a space
of formulas for elements in tensor powers $\calU(\frakg)^{\otimes n}$
of the universal enveloping algebra $\calU(\frakg)$ of $\frakg$. Within
this context, arrow tails (or ``O'') correspond to $\fraka^\star$
and arrow heads (or ``U'') correspond to $\fraka$, and the O then U
theme of this paper corresponds to the ``polarization'' isomorphism
$\calU(\frakg)\cong\calU(\fraka^\star)\otimes\calU(\fraka)$, which is
a consequence of the PBW theorem. In itself, the polarization isomorphism is central to all approaches to the
quantization of Lie bialgebras~\cite{EtingofKazhdan:BialgebrasI, Severa:BialgebrasRevisited}.

\draftcut \section{Acknowledgement} We wish to thank P.~Bellingeri,
A.~Referees, D.~Thurston, and B.~Wiest for comments and suggestions, and especially
O.~Chterental for spotting a major gap in an earlier version of this
paper. This work was partially supported by NSERC grant RGPIN-2018-04350 and by the University of Sydney Visiting Scholar Scheme.

\if\draft y
  \draftcut \input{tldt20.tex}
  \draftcut \input{recycling.tex}
\fi

\draftcut
\AtEndDocument{
\vfill
\[ \includegraphics[height=4in]{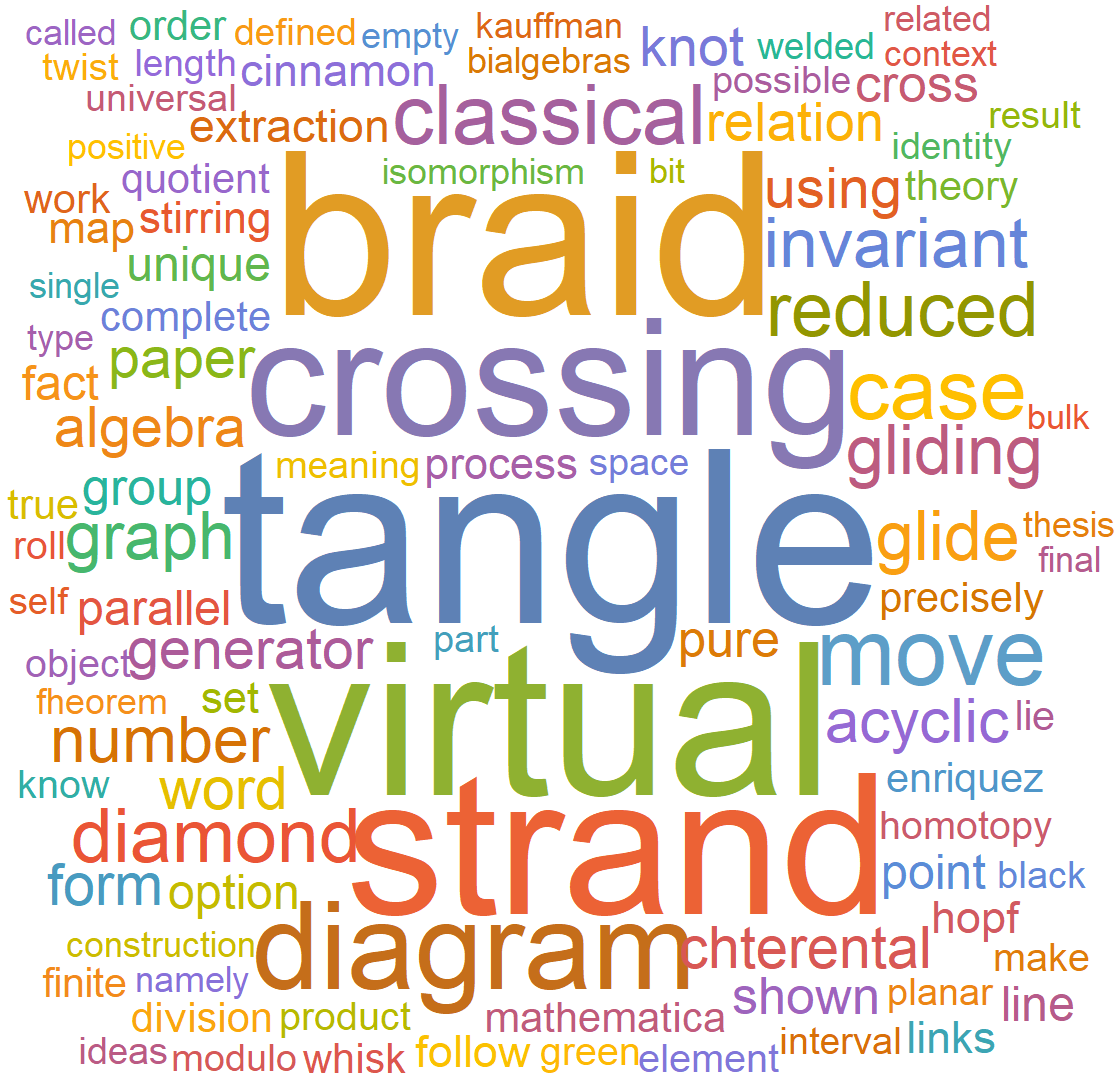} \]
\vfill}


\begin{thebibliography}{BN2}
\backrefparscanfalse\renewcommand*\backref[1]{See pp.~#1.}%
\renewcommand*\backrefalt[4]{\ifcase #1\or In page #2.\else In pages #2.\fi}

\bibitem[Ar]{Artin:TheoryOfBraids} E.~Artin,
  {\em Theory of Braids,}
  Ann.\ of Math.\ {\bf 48-1} (1947) 101--126. \backrefprint

\bibitem[ABMW1]{AudoxBellingeriMeilhanWagner:WeldedStringLinks}
  B.~Audoux, P.~Bellingeri, J-B.~Meilhan, and E.~Wagner,
  {\em Homotopy Classification of Ribbon Tubes and Welded String Links,}
  Ann.\ Sc.\ Norm.\ Super.\ Pisa Cl.\ Sci.\ {\bf 17-1} (2017) 713--761, \arXiv{1407.0184}.
  \backrefprint

\bibitem[ABMW2]{AudoxBellingeriMeilhanWagner:UVWHomotopy}
  B.~Audoux, P.~Bellingeri, J-B.~Meilhan, and E.~Wagner,
  {\em On Usual, Virtual, and Welded Knotted Objects up to Homotopy,}
  J.\ Math.\ Soc.\ Japan {\bf 69-3} (2017) 1079--1097, \arXiv{1507.00202}. \backrefprint

\bibitem[AM]{AudouxMeilhan:PeripheralSystems} B.~Audoux and J-B.~Meilhan,
  {\em Characterization of the Reduced Peripheral System of Links,}
  \arXiv{1904.04763}. \backrefprint

\bibitem[BN1]{Bar-Natan:glN} D.~Bar-Natan,
  \href{http://www.math.toronto.edu/~drorbn/papers/glN/glN.pdf}{
    {\em Vassiliev and Quantum Invariants of Braids,}
  }
  in Proc.\ of Symp.\ in Appl.\ Math.\ {\bf 51} (1995) {\em The Interface
  of Knots and Physics}, (L.~H.~Kauffman, ed.), Amer.\ Math.\ Soc.,
  Providence, \arXiv{q-alg/9607001}. \backrefprint

\bibitem[BN2]{Caen} D.~Bar-Natan and others,
  {\em Caen Workshop on v- and w-Knotted Objects,}
  2012 workshop in Caen. Videotaped lectures at
  \url{http://www.math.toronto.edu/~drorbn/Talks/Caen-1206/}.
  \backrefprint

\bibitem[BN3]{MasterClass} D.~Bar-Natan,
  {\em (u, v, and w knots)$\times$(topology, combinatorics, low algebra,
  and high algebra),} 2013 master class in Aarhus. Videotaped lectures
  at \url{http://www.math.toronto.edu/~drorbn/Talks/Aarhus-1305/}.
  \backrefprint

\bibitem[BN4]{KBH} D.~Bar-Natan,
  {\em
    \href{http://www.math.toronto.edu/~drorbn/papers/KBH/}{Balloons and
      Hoops and their Universal Finite Type Invariant, BF
      Theory, and an Ultimate Alexander Invariant,}
  }
  Acta Mathematica Vietnamica {\bf 40-2} (2015) 271--329, \arXiv{1308.1721}.
  \backrefprint

\bibitem[BN5]{Talk:OU} D.~Bar-Natan,
  {\em Over-then-Under-tangles and the Drinfel'd Double,} talk
  at \href{http://drorbn.net/m19}{\em Expansions, Lie Algebras,
  and Invariants,} Workshop at CRM, Montreal, July 2019. Video at
  \url{http://drorbn.net/m19/#Bar-Natan-4}. \backrefprint

\bibitem[BN6]{Talk:GvI} D.~Bar-Natan,
  {\em Geography vs.\ Identity,} talk
  at the \href{https://winter19.cms.math.ca/}{\em CMS Winter 2019 Meeting,}
  in Toronto, December 2019. Handout and video at
  \url{http://drorbn.net/to19/#1}. \backrefprint

\bibitem[BN7]{WKO4} D.~Bar-Natan,
  {\em Finite Type Invariants of W-Knotted Objects IV: Some Computations},
  paper and related files at
  \url{http://drorbn.net/AcademicPensieve/Projects/WKO4}, 
  \arXiv{1511.05624}. \backrefprint

\bibitem[BD1]{WKO1} D.~Bar-Natan and Z.~Dancso,
  \href
    {http://drorbn.net/AcademicPensieve/Projects/WKO1}
    {{\em Finite Type Invariants of W-Knotted Objects I: W-Knots and the
      Alexander Polynomial,}}
  Alg.\ and Geom.\ Top.\ {\bf 16-2} (2016) 1063--1133,
  \arXiv{1405.1956}. \backrefprint

\bibitem[BD2]{WKO2} D.~Bar-Natan and Z.~Dancso,
  \href
    {http://drorbn.net/AcademicPensieve/Projects/WKO2}
    {{\em Finite Type Invariants of W-Knotted Objects II: Tangles and
      the Kashiwara-Vergne Problem,}}
  Math.\ Ann.\ {\bf 367} (2017) 1517--1586, \arXiv{1405.1955}. \backrefprint

\bibitem[BD3]{WKO3} D.~Bar-Natan and Z.~Dancso,
  {\em Finite Type Invariants of W-Knotted Objects III: Double Tree Construction,}
  \url{http://drorbn.net/AcademicPensieve/Projects/WKO3} (in preparation).
  \backrefprint

\bibitem[BDV]{Self} D.~Bar-Natan, Z.~Dancso, and R.~van der Veen,
  {\em Over then Under Tangles}
  (self-reference), paper and related files at
  \url{http://drorbn.net/OU}. The \arXiv{2007.09828} edition may be older. \backrefprint

\bibitem[BM]{Bar-NatanMorrison:KnotTheory} D.~Bar-Natan, S.~Morrison, and others,
  {\tt KnotThery`}, a knot theory mathematica package,
  \url{http://katlas.org/wiki/The_Mathematica_Package_KnotTheory}. \backrefprint

\bibitem[Be]{Bergman:Diamond} G.~M.~Bergman,
  {\em The Diamond Lemma for Ring Theory,}
  Adv.\ in Math.\ {\bf 29-2} (1978) 178--218. \backrefprint

\bibitem[BB]{BirmanBrendle:BraidsSurvey} J.~S.~Birman and T.~E.~Brendle,
  {\em Braids: A Survey,}
  in {\em Handbook of Knot Theory,} (W.~Menasco, M.~Thistlethwaite, eds.) Elsevier 2005, \arXiv{math/0409205}.
  \backrefprint

\bibitem[Ch1]{Chterental:VBandVCD} O.~Chterental,
  {\em Virtual Braids and Virtual Curve Diagrams,}
  J.\ of Knot Theory and its Ramifications {\bf 24-13} (2015), \arXiv{1411.6313}. \backrefprint

\bibitem[Ch2]{Chterental:Thesis} O.~Chterental,
  {\em Virtual Braids and Virtual Curve Diagrams,}
  University of Toronto Ph.D.\ thesis, \url{http://hdl.handle.net/1807/70904}. \backrefprint

\bibitem[En1]{Enriquez:UniversalAlgebras} B.~Enriquez,
  {\em On Some Universal Algebras Associated to the Category of Lie Bialgebras,}
  Adv.\ in Math.\ {\bf 164-1} (2001) 1--23, \arXiv{math/0101059}. \backrefprint

\bibitem[En2]{Enriquez:QuantizationFunctors} B.~Enriquez,
  {\em A Cohomological Construction of Quantization Functors of Lie Bialgebras,}
  Adv.\ in Math.\ {\bf 197-2} (2005) 430--479, \arXiv{math/0212325}. \backrefprint

\bibitem[Ep]{Epstein:WordProcessing} D.~B.~A.~Epstein and others,
  {\em Word Processing in Groups,}
  A.~K.~Peters, Natick, 1992. \backrefprint

\bibitem[EK]{EtingofKazhdan:BialgebrasI} P.~Etingof and D.~Kazhdan,
  {\em Quantization of Lie Bialgebras, I,}
  Sel.\ Math.\ New Series {\bf 2} (1996) 1--41,
  \arXiv{q-alg/9506005}. \backrefprint

\bibitem[GP]{GodelleParis:WordProblems} E.~Godelle and L.~Paris,
  {\em $K(\pi,1)$ and Word Problems for Infinite Type Artin-Tits Groups, and Applications to
    Virtual Braid Groups,}
  Math.\ Zeitschrift {\bf 272} (2010) 1339–-1364, \arXiv{1007.1365}. \backrefprint

\bibitem[Ka1]{Kauffman:VirtualKnotTheory} L.~H.~Kauffman,
  {\em Virtual Knot Theory,}
  European J.\ Comb.\ {\bf 20} (1999) 663--690, \arXiv{math.GT/9811028}.
  \backrefprint

\bibitem[Ka2]{Kauffman:RotationalVirtualKnots} L.~H.~Kauffman,
  {\em Rotational Virtual Knots and Quantum Link Invariants,}
  \arXiv{1509.00578}. \backrefprint

\bibitem[Ma]{Manturov:VirtualKnots} V.~O.~Manturov,
  {\em Virtual Knots, The State of the Art,}
  Series on Knots and Everything {\bf 51}, World Scientific 2012. \backrefprint

\bibitem[Mi]{Milnor:LinkGroups} J.~W. Milnor,
  {\em Link Groups,}
  Annals of Math.\ {\bf 59} (1954) 177--195. \backrefprint

\bibitem[MV]{MurakamiVanDerVeen:Quantized} J.~Murakami and R.~van~der~Veen,
  {\em Quantized $SL(2)$ Representations of Knot Groups,}
  \arXiv{1812.09539}. \backrefprint

\bibitem[Ne]{Newman:DiamondLemma} M.~H.~A.~Newman,
  {\em On Theories with a Combinatorial Definition of ``Equivalence'',}
  Ann.\ of Math.\ 2nd Ser.\ {\bf 43-2} (1942) 223--243. \backrefprint

\bibitem[Sa]{Sapir:CombinatorialAlgebra} M.~V.~Sapir,
  {\em Combinatorial Algebra: Syntax and Semantics,}
  Springer 2014. \backrefprint

\bibitem[Se]{Severa:BialgebrasRevisited} P.~\v{S}evera,
  {\em Quantization of Lie Bialgebras Revisited,}
  Sel.\ Math.\ New Series {\bf 22} (2016) 1563--1581, \arXiv{1401.6164}. \backrefprint

\bibitem[Sm]{Smolka:Confluence} G.~Smolka,
  {\em Confluence and Normalization in Reduction Systems,}
  Lecture Notes, 2015, \url{https://www.ps.uni-saarland.de/courses/sem-ws15/ars.pdf}.
  \backrefprint

\bibitem[Th]{ThurstonD:Sutured} D.~Thurston,
  {Sutured Manifolds and Hopf algebras,}
  talk
  at \href{http://drorbn.net/m19}{\em Expansions, Lie Algebras,
  and Invariants,} Workshop at CRM, Montreal, July 2019. Video at
  \url{http://drorbn.net/m19/#Thurston}. \backrefprint

\bibitem[Wol]{Wolfram:Mathematica} 
  {\em Wolfram Language \& System Documentation Center,}
  \web{Wolf}. \backrefprint

\bibitem[Wor]{Woronowicz:Solutions} S.~L.~Woronowicz,
  {\em Solutions of the Braid Equations Related to a Hopf Algebra,}
  Lett.\ Math.\ Phys.\ {\bf 23} (1991) 143--145. \backrefprint

\end{thebibliography}
\end{document}